\newtheorem{thm}{Theorem}[section]
\newtheorem{cor}{Corollary}[section]
\newtheorem{lem}{Lemma}[section]
\theoremstyle{remark}
\newtheorem{rmk}{Remark}[section]
\newtheorem{exmp}{Example}[section]
\numberwithin{equation}{section}
\let \al=\alpha
\let \be=\beta
\let \var=\varphi
\let \vare=\varepsilon
\let \de=\delta
\let \th=\theta
\let \la=\lambda
\let \ga=\gamma
\let \Ga=\Gamma
\let \p=\partial
\let \q=\quad
\let \med=\medskip
\let \smal=\smallskip
\let \dps=\displaystyle
\let \ul=\underline
\let \ul=\underline
\let \ol=\overline
\newcommand{\R}{\mathbb{R}}
\newcommand{\N}{\mathbb{N}}
\newcommand{\Z}{\mathbb{Z}}
\DeclareMathOperator{\diag}{diag}
\begin{document}


\begin{center}
\textbf{\Large{Positive periodic solutions for systems of impulsive  delay differential equations }}
	\end{center}

\begin{center}
	Teresa Faria\footnote{Corresponding author. E-mail: teresa.faria@fc.ul.pt}\\

	Departamento de Matem\'{a}tica and CMAFCIO, Faculdade de Ci\^{e}ncias,\\
	Universidade de Lisboa, Campo Grande, 1749-016 Lisboa, Portugal\\

\end{center}

\begin{center} Rub\'en Figueroa \\
 Departamento de Estat\'\i stica, An\'alise Matem\'atica e Optimizaci\'on,\\
 Facultade de Matem\'aticas, Universidade de Santiago de Compostela, \\15782 Santiago de Compostela, Spain\\\end{center}

\

	
	\begin{abstract}
A  class of periodic  differential $n$-dimensional systems with patch structure with (possibly infinite) delay and nonlinear impulses is considered. These systems  incorporate very general nonlinearities and impulses whose signs may vary.  Criteria for
 the existence of at least one positive periodic solution are presented,  extending and improving previous ones established for the scalar case. Applications to systems  inspired in mathematical biology models, such as impulsive hematopoiesis and Nicholson-type  systems, are also included. \end{abstract}
 
 {\it Keywords}:  delay differential equations, impulses,  positive periodic solutions, Krasnoselskii's  fixed point theorem, Nicholson systems.

{\it 2010 Mathematics Subject Classification}: 34K13, 34K45, 92D25

	
	\medbreak
	
	\section{Introduction}
	\setcounter{equation}{0}
	
	Recently, differential equations with delays and impulses have  been proposed as models in popu\-lation dynamics, artificial neural networks, disease systems, chemical processes and in a  number of other scientific settings. They often  lead to  very realistic models for
evolutionary systems which go through sudden changes, caused by either natural phenomena,  drug administration or other artificial inputs.  Due to the real world interpretation of such equations,  in many contexts only positive solutions are of interest. In the case of   periodic  models, without and with impulses,   whether there exists any   positive periodic solution is   a prime question in applications.

	This paper is concerned with  a class of impulsive delay differential systems  written in abstract form as
	\begin{equation}\label{sys}
\left\{
\begin{array}{ll}
x_i'(t)=-d_i(t) x_i(t) + \displaystyle{\sum_{j=1,j \neq i}^n a_{ij}(t) x_j(t) +g_i(t,x_{it})} \ {\rm for}\ t\ne t_k,  \\
\Delta(x_i(t_k)):=x_i(t_k^+)-x_i(t_k)=I_{ik} (x_i(t_k)), \ k \in \Z,\q \q  i=1,\ldots,n,
\end{array}
\right. 
\end{equation}
where $(t_k)_{k \in \Z}\subset \R$  is a strictly increasing sequence,  the functions
$d_i,a_{ij}$ and $g_i$  
are continuous, nonnegative and periodic in $t\in \R$ (with a common period $\omega>0$),  
 and, as usual,  $x_t=(x_{1t},\dots,x_{nt})$ denotes 
the past history segment of the solution given by $x_t(s)=x(t+s)$ for $s\in [-\tau,0]$, where $\tau$ is the maximum time-delay. 
  The consideration of equations with infinite delay, in which case $x_t(s)=x(t+s)$ for $s\le 0$, is also possible. 
The impulses are supposed to occur with periodicity $\omega$ and satisfy some additional conditions introduced in  the next section. Clearly, 
 appropriate phase spaces and conditions on the impulses have to be chosen for delay differential equations (DDEs) with impulses, so that
the existence of solutions for the usual  initial value problems is ensured \cite{HMN,Ouahab,SP}.

Eq.~\eqref{sys} may refer to  growth models of
one or multiple   populations,  distributed over $n$ classes or patches with migration of the populations among them. For each $i$,
$x_i(t)$ is the density of the population  in  class $i$,
$a_{ij}(t)$ ($j\ne i$) are the migration coefficients from class $j$ to class $i$,
$d_i(t)$ the coefficient of instantaneous loss for class $i$ (which includes the death rate and the emigration rates for the population  leaving class $i$), and $g_i$ is the so called birth or production function.
Since \eqref{sys}  is considered within the framework of mathematical  biology or other natural sciences,  only  positive (or non-negative) solutions of \eqref{sys} are meaningful. Note that \eqref{sys} encompasses some relevant models, such as Nicholson or Mackey-Glass-type systems with patch structure and impulses. 

  Our main goal is to give sufficient conditions for the existence of at least one positive $\omega$-periodic solution  to \eqref{sys}, extending to $n$-dimensional systems   previous results established  in \cite{BF,FO19} for very broad classes of scalar impulsive DDEs. The method used here relies on  Krasnoselskii's fixed point theorem in cones, which is applied to a convenient and original operator constructed here, whose fixed points are precisely the  $\omega$-periodic solutions of \eqref{sys}.

Among other techniques, several  fixed point theorems
 have been extensively used to derive the existence of solutions, as well as the existence of periodic or almost periodic solutions, both for scalar and multidimensional DDEs.  For periodic {\it scalar} DDEs, this has been the subject of many researches, see e.g.~\cite{Chen,TangZou09,WJX} and  \cite{BF,FO19,Li_et.al,LiuTakeuchi,Yan07}, respectively for models without
   and with impulses,  and references therein.
   We remark that, even in the scalar impulsive case,  many authors restrict their analysis to DDEs with discrete delays and linear  impulses. 
  In the setting of  nonimpulsive {\it systems} of DDEs,
  Li \cite{Li} employed
 the continuation theorem of coincidence degree to show that a positive periodic solution must exist for a family of periodic competitive $n$-dimensional  Lotka-Volterra systems with distributed delay, and
later Krasnoselskii's fixed point theorem was applied  in \cite{TangZou}  and \cite{BCZ} to some classes of Lotka-Volterra systems with  discrete  delays. Recently, degree techniques were also used in \cite{AB} to investigate the existence of a nontrivial periodic solution to systems with a single discrete delay $\tau>0$ in the general form $x'(t)=f(t,x(t),x(t-\tau))$, with $f$ non-negative, continuous and periodic in $t$. 
We also refer to \cite{DingFu,Faria17,HWH,Troib,Wang+19} for  the treatment of periodic or almost periodic  multidimensional Nicholson systems.

The literature is however practically nonexistent
in what concerns the use of fixed point methods to address the existence of a positive periodic solution for  {\it impulsive systems} of DDEs, the paper of Zhang et al. \cite{ZHW} (on a planar impulsive Nicholson system) being an exception.   As far as the authors know, the new methodology  proposed here for the first time allows   handling   very broad classes of impulsive systems of DDEs, with very mild constraints on the impulses. 

	The  organization of this paper is  now  described.  Section 2 is a section of preliminaries,   where the main hypotheses for \eqref{sys} are introduced,  a suitable operator $\Phi$ on a cone $K$  is defined  and its major properties  deduced. 
Section 3 contains the main results of the paper, which establish easily verifiable sufficient conditions  for the existence of positive fixed points of $\Phi$, i.e.,
  $\omega$-periodic solutions to \eqref{sys}. 
A version of the Krasnoselskii  theorem in \cite{AgMeeRegan} is used, both in its  compressive and expansive forms.
Moreover,  as simple consequences of the main results, criteria  based on either a pointwise or an average comparison (for $t\in [0,\omega]$) of the coefficients in \eqref{sys}  are derived.
 In Section 4,  we analyse some families of systems with bounded linearities.
  Section 5 presents some selected examples inspired in mathematical biology models, within the framework of impulsive hematopoiesis and Nicholson-type  systems. A short section of conclusions ends the paper.
   	
	\section{Preliminaries}
	\setcounter{equation}{0}
	
	
%
	
We first set some notation.	
	For a compact interval $I=[\alpha,\beta]$  ($\alpha<\beta$) and $n\in\N$,  consider  the space  of the  piecewise continuous functions  on $I$ which are left-continuous in $(\alpha,\beta]$,
\begin{equation*}
\begin{split}PC(I,\mathbb{R}^n):=\{\varphi:I\to\R^n\, |\ & \varphi\ {\rm is\ continuous\ except\ for\ a\ finite\ number \ of\ points}\\ &{\rm for\ which\ there\ are}
\ \varphi(s^-)=\varphi(s),\, \varphi(s^+)\},
\end{split}
\end{equation*}  with the norm $\|\var\|_\infty=\max_{t\in I}|\var(t)|$, for some fixed norm $|\cdot |$ in $\R^n$.  For $\omega >0$, denote $PC_\omega(\R,\R^n)=\{x:\R\to \R^n \, |\,  x\ {\rm is}\  \omega$-periodic$, x_{|_{[0,\omega]}}\in PC([0,\omega],\R^n)\}$, with the supremum norm in $[0,\omega]$.
We write $\R^+=[0,\infty), \R^-=(-\infty,0]$. For $v\in\R^n$,  $v\ge 0$ stands for $v\in (\R^+)^n$ and $v>0$ for $v\in (0,\infty)^n$; for a function $x:I\to\R^n$, $x\ge 0$, $x>0$ stand for $x(t)\ge 0$, $x(t)>0$ for all $t\in I$, respectively.


  Consider the cone of non-negative elements
 $PC_\omega^+(\R,\R^n)=\{x\in PC_\omega(\R,\R^n):x\ge 0\}$.
 For spaces of continuous (rather than piecewise continuous) functions,  the similar notations $ C_\omega(\R,\R^n)$ and $ C_\omega^+(\R,\R^n)$ will be used.
When $n=1$, we  also write $C_\omega(\R)=C_\omega(\R,\R), C_\omega^+(\R)=C_\omega^+(\R,\R)$ and $PC_\omega(\R)=PC_\omega(\R,\R), PC_\omega^+(\R)=PC_\omega^+(\R,\R)$. 
 Here, $\R^n$ is also  seen as the set of constant functions (defined on an interval $I$ or $\R$).  

For  a fixed finite time-delay $\tau>0$, set $PC:=PC([-\tau,0],\R^n)$ as the phase space. 
Consider  the $n$-dimensional  DDE with impulses \eqref{sys}, 
for which  initial conditions have the form
$x_\sigma=\var$ for  $(\sigma, \var)\in \R\times PC$.
The following hypotheses on \eqref{sys} will be assumed:
\begin{itemize}
\item[(H1)] The functions $I_{ik}:\R^+\to\mathbb{R}$ are continuous and    there is a positive integer $p$ such that
	$0\le t_1<\cdots <t_p< \omega$  (for some $\omega>0$) and 
	$t_{k+p}=t_k+\omega,\ I_{i,k+p}=I_{ik},\ k\in\Z, i=1,\dots,n;$
	\item[(H2)] There exist constants $\al_{ik}>-1$ and $\eta_{ik}$ such that $\al_{ik}u\leq I_{ik}(u)\leq \eta_{ik} u$ for  $u\geq 0$ and   there are the limits $\dps\lim_{u\to 0^+}\frac{u}{u+I_{ik}(u)},$
	for $i=1,\dots,n,k=1,\dots,p$;
	\item[(H3)] $ \prod_{k=1}^p(1+\eta_{ik})<e^{\int_0^\omega d_i(t)\, dt},\ i=1,\dots,n$;
	\item[(H4)] (i) For $i,j=1,\dots,n$,  $d_i,a_{ij}\in C_\omega^+(\R)$ with $\int_0^\omega d_i(s)\, ds>0$, the functions $g_i:\R\times PC([-\tau,0],\R)\to \R^+$ are  continuous,  $\omega$-periodic  in $t\in\R$ and
	$$g(t,x_t):=(g_1(t,x_{1t}),\dots,g_n(t,x_{nt}))$$ is  bounded on bounded sets of $\R\times PC$; \\ (ii) moreover,  if $n>1$,   either $ \int_0^\omega a_{ij}(s)\, ds>0$ for all $i\ne j$   or
  $\int_0^\omega g_i(s,0)\, ds>0$, for each  $i=1,\dots,n$.
\end{itemize}

For  fixed $\omega>0$,  $n\in\N$ and a sequence
 $(t_k)_{k\in\Z}$ as in (H1), define the space
  \begin{equation}\label{spaceX}
 \begin{split}
 X:=X(\R^n)=\{ x:\R\to\R^n\ |&\ x\ {\rm is\ } \omega{\rm -periodic,\ continuous \ for\  all}\  t\neq t_k,\\
&{\rm and}\  x(t_k^-)=x(t_k),\ x(t_k^+)\in\R,\ {\rm for}\  k\in\Z\},
 \end{split}
 \end{equation}
 and the cone
  \begin{equation}\label{X+}
  X^+:=X(\R^n)^+=\{x\in X: x(t)\ge 0,\ t\in[0,\omega]\}.
  \end{equation}
 Hereafter,  $X$ is endowed with the norm $\|\cdot\|_\infty$ (where  the maximum norm in taken in $\R^n$), simply denoted by $\|\cdot\|$, and with the partial order  induced by the cone $X^+$.

\begin{rmk}\label{rmk2.1}
In the case of infinite delay, as phase space we may take any admissible Banach space $({\cal B},\|\cdot\|_{\cal B})$ (in the sense of Hale and Kato definition \cite{HaleKato}) of functions from $\R^-$ to $\R^n,$ such that ${\cal B}$ contains the space $ PC_\omega(\R^-,\R^n)$ of piecewise continuous, $\omega$-periodic functions $x:\R^-\to\R^n$,
 and such that the norms $\|\cdot\|$ and $\|\cdot\|_{\cal B}$ are equivalent in $ PC_\omega(\R^-,\R^n)$. 
 See \cite{BF,HMN} for details. To simplify the exposition, below we only consider systems with finite delay, although straightforward adjustments can be effected to deal with the infinite delay case.
\end{rmk}

We remark that (H2) implies that $I_{ik}(u)>-u$ for $u>0$, hence a positive solution of \eqref{sys} will remain positive after suffering an impulse at each instant $t_k$. If $n>1$, without loss of generality we take $a_{ii}\equiv 0$ for $i=1,\dots,n$. 
For $d_i\in C_\omega^+(\R)$, the requirement
$\int_0^\omega d_i(s)\, ds>0$ guarantees that $d_i$ is not identically zero. Similarly, with  $a_{ij},g_i(\cdot,0)\in C_\omega^+(\R)$, $a_{ij}\not\equiv 0$ if
$\int_0^\omega a_{ij}(s)\, ds>0\ (j\ne i)$ and $g_i(\cdot,0)\not\equiv 0$ if $\int_0^\omega g_i(s,0)\, ds>0$. The role of assumption (H4)(ii) is to preclude the existence of periodic solutions with one component positive but with others that may vanish. See additional comments on Remark \ref{rmk2.2}.

In order to simplify the exposition,
 for $i=1,\dots,n, k=1,\dots,p, t\in\R$, consider the following auxiliary functions: 
\begin{align}\label{B&Js}
&D_i(t)=\int_0^t d_i(s)\, ds,\q J_{ik}(u)=\left\{
\begin{array}{ll}
\dfrac{u}{u+I_{ik}(u)}\ ,\ u>0,  \\
\dps \lim_{u\to 0^+}\frac{u}{u+I_{ik}(u)}\ ,\ u=0
\end{array}
\right.
\\
& B_i(t;x_i)=\displaystyle\prod_{k:t_k\in[0,t)}J_{ik}(x_i(t_k))\q {\rm and}\\
&
\tilde{B_i}(s,t;x_i)=\frac{B_i(s;x_i)}{B_i(t;x_i)}=\displaystyle\prod_{k:t_k\in[t,s)}J_{ik}(x_i(t_k)) \q {\rm for}\ 0\leq t\leq s\leq t+\omega, x\in X^+,
\end{align}
and the autonomous quantities
\begin{align}\label{D&Gas}
&D_i(\omega)=\int_0^\omega d_i(s)\, ds,\q B_i(\omega;x_i)=\prod_{k=1}^p J_{ik}(x_i(t_k)),\\
& \Ga_i(x_i)=\Big (B_i(\omega;x_i)e^{D_i(\omega)}-1\Big)^{-1}\q {\rm for}\q i=1,\dots,n,   x\in X^+. \end{align}
We adopt the usual convention that a product is equal to one when the number of factors is zero.

For systems without impulses, clearly $J_{ik}(u)\equiv 1, B_i(t;x_i)\equiv 1,\Ga_i(x_i)\equiv \big (e^{D_i(\omega)}-1\big)^{-1}$.
On the other hand, when all impulses are linear, i.e., $I_{ik}(u)=\eta_{ik}u$ for some constants $\eta_{ik}>-1$ with 
 (H1),(H3) fulfilled,   $J_{ik}$ are also constants, $J_{ik}\equiv (1+\eta_{ik})^{-1}$, thus the  functions $B_i$ and $\tilde B_i$ do not depend on $x$.



We state some properties of these auxiliary functions, whose validity is easily verified by adapting the arguments for the scalar version of \eqref{sys}; the reader is referred to \cite{BF}    to complete a proof of the properties below.

\begin{lem}\label{lem2.1} Assume (H1)--(H4). For $i=1,\dots,n, k\in \Z, x=(x_1,\dots,x_n)\in X^+$:

(i) $J_{ik}:\R^+\to (0,\infty), \Ga_i: X^+(\R)\to (0,\infty)$ are continuous and satisfy
\begin{align*}
(1+\eta_{ik})^{-1}&\le J_{ik}(u)\le (1+\al_{ik})^{-1},\q u\ge 0,\\
\ul{\Ga_i}&\le \Ga_i(x_i)\le \ol{\Ga_i}\, ,
\end{align*} where
$\ul{\Ga_i}:=\Big (\prod_{k=1}^p (1+\al_{ik})^{-1}e^{D_i(\omega)}-1\Big)^{-1},\ol{\Ga_i}:=\Big (\prod_{k=1}^p (1+\eta_{ik})^{-1}e^{D_i(\omega)}-1\Big)^{-1}$;

(iii) $B_i(t+\omega;x_i)=B_i(t;x_i)B(\omega;x_i)$ for $t\in\R$;

(iv) $B_i(t_k+\vare;x_i)=B_i(t_k;x_i)J_{ik}(x_i(t_k))^{-1}$  for $0<\vare<\min_{1\le k\le p} (t_{k+1}-t_k)$;

(v) $\tilde{B_i}(s,t;x_i)$ are bounded functions on $D\times X^+(\R)$, where $D=\{(s,t)\in\R^2: t\le s\le t+\omega\}$, with
$$\ul{B_i}\le \tilde{B_i}(s,t;x_i)\le \ol{B_i}\q {\rm for}\q (s,t,x_i)\in D\times X^+(\R),$$ where
 $\underline{B_i}:=\min \big \{ \prod_{k=j}^{j+l-1} (1+\eta_{ik})^{-1}: j=1,\dots ,p, l=0,\dots, p\big\}$, $\overline{B_i}:=\max \big \{ \prod_{k=j}^{j+l-1} (1+\al_{ik})^{-1}: j=1,\dots ,p, l=0,\dots, p\big\} $;

(vi) $\tilde{B_i}(s+\omega,t+\omega;x_i)=\tilde{B_i}(s,t;x_i)$ for $(s,t,x_i)\in D\times X^+(\R)$;

(vii) if $x(t)=(x_1(t),\dots,x_n(t))$ is a solution of \eqref{sys}, the function $y(t)=(y_1(t),\dots,y_n(t))$, where 
\begin{equation}\label{contSol}y_i(t)=B_i(t;x_i)x_i(t),\q i=1,\dots,n,
\end{equation} is continuous.
\end{lem}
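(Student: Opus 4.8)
The plan is to use that the statement \emph{decouples in the index $i$}: each of the functions $J_{ik}$, $B_i(\cdot;x_i)$, $\tilde{B_i}(\cdot,\cdot;x_i)$ and $\Ga_i$ depends only on the single coefficient $d_i$, on the scalar impulse maps $I_{ik}$ ($k=1,\dots,p$), and on the $i$-th coordinate $x_i\in X^+(\R)$. Thus it suffices to fix $i$ and prove the scalar version of the lemma, which is precisely the one established in \cite{BF}; I describe below how I would carry out that verification. Two structural facts do all the work: (a) by (H2), $(1+\al_{ik})u\le u+I_{ik}(u)\le(1+\eta_{ik})u$ for $u\ge 0$, with $1+\al_{ik}>0$ and $\al_{ik}\le\eta_{ik}$; and (b) by (H3), $\prod_{k=1}^p(1+\eta_{ik})^{-1}e^{D_i(\omega)}>1$.

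For (i), I would divide the inequalities in (a) by $u>0$: this shows $J_{ik}$ is well defined, continuous and positive on $(0,\infty)$ with $(1+\eta_{ik})^{-1}\le J_{ik}(u)\le(1+\al_{ik})^{-1}$, and the extension to $u=0$ is continuous by the limit assumed in (H2), the two-sided bound passing to the limit. For $\Ga_i$, note that $B_i(\omega;x_i)=\prod_{k=1}^pJ_{ik}(x_i(t_k))$ lies in the interval $\bigl[\prod_{k=1}^p(1+\eta_{ik})^{-1},\prod_{k=1}^p(1+\al_{ik})^{-1}\bigr]$, whose endpoints are both $>e^{-D_i(\omega)}$ by (b); hence $B_i(\omega;x_i)e^{D_i(\omega)}>1$ and $\Ga_i(x_i)=(B_i(\omega;x_i)e^{D_i(\omega)}-1)^{-1}$ is a well-defined positive number. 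Continuity of $\Ga_i$ on $X^+(\R)$ follows by composing the continuous evaluation $x_i\mapsto(x_i(t_1),\dots,x_i(t_p))$ (continuous for $\|\cdot\|_\infty$, since $x_i(t_k^-)=x_i(t_k)$) with the continuous maps $J_{ik}$ and $r\mapsto(re^{D_i(\omega)}-1)^{-1}$; the bounds $\ul{\Ga_i}\le\Ga_i(x_i)\le\ol{\Ga_i}$ then drop out because this last map is decreasing on $(e^{-D_i(\omega)},\infty)$.

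Parts (iii)--(vi) are finite-product bookkeeping that I would read directly off \eqref{B&Js}. For (iii) and (vi), split the set of impulse times in $[t,t+\omega)$ (resp.\ in $[0,t+\omega)$) into its part in $[0,\omega)$ and its part in $[\omega,t+\omega)$, and use $t_{k+p}=t_k+\omega$, $I_{i,k+p}=I_{ik}$ together with the $\omega$-periodicity of $x_i$ (whence $x_i(t_{k+p})=x_i(t_k)$) to match the second block of factors with a shifted copy of the first; this yields $B_i(t+\omega;x_i)=B_i(\omega;x_i)B_i(t;x_i)$ and the analogous identity for $\tilde{B_i}$. For (iv), as $t$ increases through $t_k$ exactly one factor, namely $J_{ik}(x_i(t_k))$, enters the product defining $B_i(t;x_i)$, which pins down $B_i(t_k+\vare;x_i)$ for $0<\vare<\min_{1\le k\le p}(t_{k+1}-t_k)$. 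For (v), the half-open window $[t,s)$ with $s-t\le\omega$ meets the ($\omega$-periodically extended) set $\{t_k\}$ in a consecutive ``cyclic block'' $\{t_j,\dots,t_{j+l-1}\}$ with $0\le l\le p$, so $\tilde{B_i}(s,t;x_i)=\prod_{k=j}^{j+l-1}J_{ik}(x_i(t_k))$; bounding each factor by $(1+\eta_{ik})^{-1}$ below and $(1+\al_{ik})^{-1}$ above and taking the extreme block over all admissible $j,l$ gives $\ul{B_i}\le\tilde{B_i}(s,t;x_i)\le\ol{B_i}$.

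For (vii), on each half-open interval $(t_k,t_{k+1}]$ the factor $t\mapsto B_i(t;x_i)$ is constant and $x_i$ is continuous, so $y_i$ is continuous there; it remains to rule out a jump of $y_i$ at each $t_k$. Combining $x_i(t_k^+)=x_i(t_k)+I_{ik}(x_i(t_k))$, the jump of $B_i$ from (iv), and the defining identity $J_{ik}(x_i(t_k))\bigl(x_i(t_k)+I_{ik}(x_i(t_k))\bigr)=x_i(t_k)$ gives $y_i(t_k^+)=B_i(t_k;x_i)x_i(t_k)=y_i(t_k)$; when $x_i(t_k)=0$ one has $I_{ik}(0)=0$ by (H2), so $x_i(t_k^+)=0$ and both one-sided limits of $y_i$ vanish, the value $J_{ik}(0)$ being irrelevant. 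Hence $y=(y_1,\dots,y_n)$ is continuous. I do not expect a genuine obstacle here: once the scalar case of \cite{BF} is available the adaptation is purely formal. The two points that warrant care are keeping the cyclic indexing of impulse blocks consistent across (iii)--(vi), and --- the more substantive one --- invoking (H3) at exactly the place where it is needed, namely to secure $B_i(\omega;x_i)e^{D_i(\omega)}>1$ uniformly over $x\in X^+$, without which $\Ga_i$ would be neither well defined nor positive and parts (i) and (v) would collapse.
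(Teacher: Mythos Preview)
Your proposal is correct and matches the paper's approach exactly: the paper does not prove the lemma in detail but simply observes that the properties reduce componentwise to the scalar case treated in \cite{BF}, which is precisely your strategy. Your sketch of the verification for each part in fact goes beyond what the paper provides and is sound.
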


From Lemma \ref{lem2.1}, we obtain: 
\begin{lem}\label{lem2.2} Assume (H1)--(H4). The operator $\Phi:X^+\to X^+$ given by
\begin{equation}\label{Phi}
\begin{split}
 \Phi &=(\Phi_1,\dots, \Phi_n),\\
(\Phi_i x)(t)&=\Ga_i(x_i)\int_t^{t+\omega}\tilde{B_i}(s,t;x_i)e^{\int_t^sd_i(r)\, dr}\left (\sum_{j\ne i}a_{ij}(s)x_j(s)+g_i(s,x_{is})\right)ds,\, \, t\in\R,
\end{split}
\end{equation}
is well defined. Moreover,  $x$ is a nonnegative $\omega$-periodic solution of \eqref{sys} if and only if $x$ is a fixed point of $\Phi$. 
%
%
\end{lem}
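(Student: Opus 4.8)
The plan is to show two things: first, that the operator $\Phi$ maps $X^+$ into itself and is well defined; second, that its fixed points coincide with the nonnegative $\omega$-periodic solutions of \eqref{sys}. For the first part, I would fix $x\in X^+$ and check that each $\Phi_i x$ is a genuine element of $X$, i.e.\ that it is $\omega$-periodic, continuous off the $t_k$, and has the appropriate one-sided limits at the $t_k$, and that it is nonnegative. Periodicity of $\Phi_i x$ follows by the substitution $s\mapsto s+\omega$ in the integral, using $\omega$-periodicity of $d_i,a_{ij},g_i$, property (vi) of Lemma~\ref{lem2.1} for $\tilde B_i$, and the fact that $\Ga_i(x_i)$ depends only on $x_i$ through the values $x_i(t_k)$, which are $\omega$-periodic in $k$ by (H1); one must be a little careful that $B_i(t+\omega;x_i)$ differs from $B_i(t;x_i)$ by the constant factor $B_i(\omega;x_i)$ (property (iii)), but this cancels in $\tilde B_i(s,t;x_i)=B_i(s;x_i)/B_i(t;x_i)$. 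Nonnegativity is immediate from Lemma~\ref{lem2.1}(i) ($\Ga_i>0$, $J_{ik}>0$ so $\tilde B_i>0$), the positivity of the exponential, and the hypothesis that $a_{ij}\ge 0$, $g_i\ge 0$. Finiteness of the integral uses that $x_j$ is bounded on $[0,\omega]$, that $g$ is bounded on bounded sets of $\R\times PC$ (H4)(i), and the bounds on $\tilde B_i$ and $\Ga_i$ from Lemma~\ref{lem2.1}(i),(v).

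The regularity of $\Phi_i x$ at the impulse points is the more delicate book-keeping step. For $t\ne t_k$ the integrand $s\mapsto \tilde B_i(s,t;x_i)e^{\int_t^s d_i}(\cdots)$ is piecewise continuous in $s$ with only jumps at the $s=t_k$, so the integral is a continuous function of $t$ as long as $t$ avoids the $t_k$; differentiability away from the $t_k$ can also be recorded here by Leibniz's rule, since it will be needed for the equivalence with \eqref{sys}. At $t=t_k$ one computes the jump $\Delta(\Phi_i x)(t_k)=(\Phi_i x)(t_k^+)-(\Phi_i x)(t_k)$ using property (iv) of Lemma~\ref{lem2.1}, namely $B_i(t_k+\vare;x_i)=B_i(t_k;x_i)J_{ik}(x_i(t_k))^{-1}$, which introduces exactly the factor $J_{ik}^{-1}$ relating $\tilde B_i(s,t_k^+;x_i)$ to $\tilde B_i(s,t_k;x_i)$; this is what will produce the impulsive condition in \eqref{sys} after the change of variables below. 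One should also note that $(\Phi_i x)(t_k^-)=(\Phi_i x)(t_k)$, i.e.\ left-continuity, which holds because $B_i(\cdot;x_i)$ is built from the half-open product over $\{k:t_k\in[0,t)\}$.

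For the equivalence, the standard variation-of-constants argument for linear impulsive ODEs is the backbone. Given a solution $x$ of \eqref{sys}, set $y_i(t)=B_i(t;x_i)x_i(t)$ as in \eqref{contSol}; by Lemma~\ref{lem2.1}(vii) this $y_i$ is continuous, and a direct computation shows $y_i$ satisfies the \emph{nonimpulsive} linear equation $y_i'(t)=-d_i(t)y_i(t)+B_i(t;x_i)\big(\sum_{j\ne i}a_{ij}(t)x_j(t)+g_i(t,x_{it})\big)$ for $t\ne t_k$ (the impulse has been absorbed into the weight $B_i$). Integrating this over $[t,t+\omega]$ with the integrating factor $e^{D_i(t)}$, using $\omega$-periodicity of $y_i$ together with $y_i(t+\omega)=B_i(\omega;x_i)e^{\cdots}$-type identities, and then dividing back by $B_i(t;x_i)$ and simplifying with $\Ga_i(x_i)=(B_i(\omega;x_i)e^{D_i(\omega)}-1)^{-1}$, yields exactly $x_i(t)=(\Phi_i x)(t)$. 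Conversely, if $x\in X^+$ is a fixed point of $\Phi$, one reverses the computation: differentiate $(\Phi_i x)(t)$ for $t\ne t_k$ (Leibniz plus the periodicity cancellation) to recover the differential equation, and read off the jump at $t_k$ from the factor $J_{ik}^{-1}$ identified above, which gives $\Delta(x_i(t_k))=\big(J_{ik}(x_i(t_k))^{-1}-1\big)x_i(t_k)=I_{ik}(x_i(t_k))$ by the definition \eqref{B&Js} of $J_{ik}$. I expect the main obstacle to be purely organizational: carefully tracking the half-open products and the one-sided limits so that the jump of $\Phi_i x$ at $t_k$ matches $I_{ik}$ on the nose, rather than up to an off-by-one error in which intervals $[t_k,t_{k+1})$ carry which factor; the analytic content is entirely routine once Lemma~\ref{lem2.1} is in hand.
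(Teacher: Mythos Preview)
Your proposal is correct and follows essentially the same route as the paper's proof: well-definedness via Lemma~\ref{lem2.1}, the transformation $y_i(t)=B_i(t;x_i)x_i(t)$ to a nonimpulsive equation, integration with the factor $e^{D_i}$, and the converse by Leibniz plus the jump factor $J_{ik}^{-1}$. One small slip: $y_i$ is \emph{not} $\omega$-periodic; rather $y_i(t+\omega)=B_i(\omega;x_i)\,y_i(t)$ (from Lemma~\ref{lem2.1}(iii) and the $\omega$-periodicity of $x_i$), and it is precisely this multiplicative factor, combined with $e^{D_i(\omega)}$, that produces the $\Ga_i(x_i)^{-1}$ on the left after integration --- but you clearly have the right identity in mind.
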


\begin{proof} We argue along the major lines in \cite{BF,FO19}. Let $x=(x_1,\dots,x_n)\in X^+$. Clearly,  $\Phi x\geq 0$,  $t\mapsto(\Phi x)(t)$ is continuous for  $t\ne t_k$ and left-continuous on $t_k$, $k=1,\dots,p$. The properties in Lemma \ref{lem2.1}  show that $(\Phi x)(t)$ is $\omega$-periodic and that
\begin{equation}\label{impulsesPhi}(\Phi_i x)(t_k^+)=\lim_{\vare\to 0^+} (\Phi_i x)(t_k+\vare)=J_{ik}(x_i(t_k))^{-1}(\Phi_i x)(t_k)
\end{equation}
for all $i\in\{1,\dots,n\}$ and $k\in\Z$, thus $\Phi(X^+)\subset X^+$.


Take $x=(x_1,\dots,x_n)\in X^+$ and suppose that $x$ is a  solution of \eqref{sys}. For the continuous function $y(t)$ with components as in \eqref{contSol} and $t\ne t_k \, (k\in\Z)$, we have
$$\Big (y_i'(t)+d_i(t)y_i(t)\Big )e^{D_i(t)}=B_i(t;x_i) e^{D_i(t)}\bigg (\sum_{j\ne i} a_{ij}(t)x_j(t)+g_i(t,x_{it})\bigg).$$
Since $x_i(t)$ is $\omega$-periodic and $y_i(t)$ is continuous, integration over intervals $[t,t+\omega]$, the properties in Lemma \ref{lem2.1} and computations as in \cite{BF} lead to
$$x_i(t)B_i(t;x_i)e^{D_i(t)}\Big (B_i(\omega;x_i)e^{D_i(\omega)}-1\Big)=\int_t^{t+\omega} B_i(s;x_i)e^{D_i(s)}\bigg (\sum_{j\ne i}a_{ij}(s)x_j(s)+g_i(s,x_{is})\bigg)ds,$$
thus $x_i(t)=(\Phi_i x)(t)$ for all $i$ and $t$, and $x$ is a fixed point of $\Phi$.

Conversely if $x\in X^+$ is a fixed point of $\Phi$, for $t\ne t_k$ differentiation of $x_i(t) \, (1\le i\le n)$ gives
\begin{equation*}
\begin{split}
x'_i(t)&=(\Phi_i x)'(t)\\
&=-d_i(t)(\Phi_i x)(t)+\Ga_i(x_i)\Big (B_i(\omega;x_i)e^{D_i(\omega)}-1\Big) \Big (\sum_{j\ne i}a_{ij}(t)x_j(t)+g_i(t,x_{it})\Big )\\
&=-d_i(t) x_i(t)+\sum_{j\ne i}a_{ij}(t)x_j(t)+g_i(t,x_{it}).
\end{split}
\end{equation*}
On the other hand, for $t=t_k$,  from \eqref{impulsesPhi} we get 
\begin{equation*}
\begin{split}\Delta x_i(t_k)&=(\Phi_i x)(t_k^+)-x_i(t_k)\\
&=J_{ik}(x_i(t_k))^{-1}(\Phi_i x)(t_k)-x_i(t_k)\\
&=[J_{ik}(x_i(t_k))^{-1}-1]x_i(t_k)=I_{ik}(x_i(t_k)).
\end{split}
\end{equation*}
Therefore, $x$ is a solution of \eqref{sys}.
\end{proof}

For any  $\sigma=(\sigma_1,\dots,\sigma_n)\in(0,1)^n$,  consider a new cone $K(\sigma)$ in $X$ given by
\begin{equation}\label{K}
K({\sigma}):=\{x\in X^+:x_i(t)\geq\sigma_i \|x_i\|, t\in [0,\omega], i=1,\dots,n\}.
\end{equation}
If  $\sigma\in(0,1)^n$ is fixed, we denote $K({\sigma})$ simply by $K$
and $$K_0=K_0({\sigma}):=\{x\in K:x_i(t)>0, t\in [0,\omega],i=1,\dots,n\}.$$
The aim is to prove the existence of  a fixed point of $\Phi$ in $K_0$, so that a positive $\omega$-periodic solution of \eqref{sys} exists.
For this, a  Krasnoselskii  fixed point theorem in the version in \cite[Theorems   7.3 and 7.6]{AgMeeRegan}, which includes both the compressive and expansive forms, will be used.

%
%
%
%


\begin{thm}\label{thmKras}\cite{AgMeeRegan} Let $K$ be a closed cone in a Banach space, $r,R\in\R^+$ with $r\ne R$, $r_0=\min\{r,R\} , R_0= \max\{r,R\}$ and  
 $K_{R_0}:=\{x\in K:\|x\| \leq R_0\}$. Assume that
 $T:K_{R_0} \longrightarrow K$ is a completely continuous operator such that
\begin{enumerate}
\item  $Tx\ne \la x$ for all $x \in K$ with $\|x\|=R$ and all $\la>1$;
\item There exists $\psi \in K\setminus\{0\}$ such that $x \neq Tx + \lambda \psi$ for all $x \in K$ with $\|x\|=r$ and all $\lambda >0$.
\end{enumerate}
Then $T$ has a fixed point  in $K_{r_0,R_0}:=\{x\in K:r_0\leq\|x\| \leq R_0\}$.
\end{thm}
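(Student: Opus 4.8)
The plan is to read Theorem~\ref{thmKras} as a unified compression--expansion form of Krasnoselskii's fixed point theorem and to prove it via the fixed point index $i(T,U,K)$ for completely continuous self-maps of the cone $K$, defined whenever $U\subset K$ is relatively open and bounded and $T$ has no fixed point on $\partial U$. I would invoke its four standard properties without reproving them: normalization ($i(x_0,U,K)=1$ for a constant $x_0\in U$), homotopy invariance, additivity/excision, and the solution property ($i(T,U,K)\ne 0$ forces a fixed point of $T$ in $U$). Throughout, for $\rho>0$ I set $K_\rho=\{x\in K:\|x\|<\rho\}$, so that $\partial K_\rho=\{x\in K:\|x\|=\rho\}$, and I note that $T$ restricts to a completely continuous map on each $\overline{K_\rho}$ with $\rho\le R_0$.

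First I would dispose of a trivial case: if $T$ has a fixed point on $\partial K_r$ or on $\partial K_R$, the conclusion already holds. So I may assume it does not, which upgrades hypothesis~(1) to ``$Tx\ne\lambda x$ on $\partial K_R$ for every $\lambda\ge 1$'' and hypothesis~(2) to ``$x\ne Tx+\lambda\psi$ on $\partial K_r$ for every $\lambda\ge 0$''. Then I would compute two indices. At radius $R$: the homotopy $H(t,x)=tTx$, $t\in[0,1]$, is admissible on $\overline{K_R}$, since $x=tTx\in\partial K_R$ is impossible at $t=0$ (as $0\notin\partial K_R$) and for $t\in(0,1]$ would force $Tx=(1/t)x$ with $1/t\ge 1$, contradicting the upgraded (1); hence $i(T,K_R,K)=i(0,K_R,K)=1$. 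At radius $r$: with $M:=\sup_{x\in\overline{K_r}}\|Tx\|<\infty$ (finite by complete continuity) and $\lambda_0>(r+M)/\|\psi\|$, the map $x\mapsto Tx+\lambda_0\psi$ has no fixed point anywhere in $\overline{K_r}$ (else $\lambda_0\|\psi\|=\|x-Tx\|\le r+M$), so its index over $K_r$ is $0$ by the solution property; the homotopy $H(t,x)=Tx+t\lambda_0\psi$, $t\in[0,1]$, is admissible on $\partial K_r$ by the upgraded (2), and homotopy invariance gives $i(T,K_r,K)=0$.

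Finally, since $r\ne R$ one has $\overline{K_{r_0}}\subset K_{R_0}$, and additivity/excision yields
\[
i(T,K_{R_0}\setminus\overline{K_{r_0}},K)=i(T,K_{R_0},K)-i(T,K_{r_0},K),
\]
which equals $1-0=1$ when $R>r$ and $0-1=-1$ when $r>R$; in both cases it is nonzero, so the solution property produces a fixed point of $T$ in $K_{R_0}\setminus\overline{K_{r_0}}\subset K_{r_0,R_0}$, as required.

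The one genuine obstacle is \emph{availability}: the argument needs the fixed point index for self-maps of a cone in an arbitrary Banach space, equipped with all four properties above. This is classical — it can be built from Leray--Schauder degree by retracting the ambient space onto $K$, or taken from the treatments of Amann or Guo--Lakshmikantham — and in the present setting one simply invokes \cite{AgMeeRegan}; I would do likewise rather than reconstruct the index theory. The only other thing to be careful about is the bookkeeping between $r$ and $R$ (which is $r_0$ and which is $R_0$), which the uniform sign computation in the last display settles.
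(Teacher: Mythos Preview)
Your argument is correct and is the standard fixed point index proof of the compression--expansion principle. However, there is nothing to compare: the paper does not prove Theorem~\ref{thmKras} at all. It is quoted verbatim from \cite{AgMeeRegan} (Theorems~7.3 and~7.6 there) and used as a black box throughout Section~3. So you have supplied a proof where the authors supply only a reference; your index computation is exactly the kind of argument one finds in \cite{AgMeeRegan} or in the Amann/Guo--Lakshmikantham treatments you mention, and it is sound as written.
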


For $g$ as in (H4), we also define 
\begin{equation}\label{G}
G(t,x)=g(t,x_t)\q {\rm for}\q t\in\R, x\in X^+.\end{equation}
To derive the compactness of the operator $\Phi$,
an additional hypothesis on $g$ is assumed:
\begin{itemize}
	\item[(H5)] The function $t\mapsto G(t,x)$ is {\it uniformly equicontinuous}  for $t\in [0,\omega]$ on  bounded sets of $K$, in the sense that for any $A\subset K$ bounded and $\vare>0$, there is $\de>0$ such that $\max_{t\in [0,\omega]}|G(t,x)-G(t,y)|<\vare$ for all  $x,y\in A$ with $\|x-y\|<\de$.
\end{itemize}

\begin{lem}\label{lem2.3} Assume (H1)--(H4), consider $\sigma=(\sigma_1,\dots,\sigma_n)$ with $0<\sigma_i\le \ul{B_i}\ol{B_i}^{-1}e^{-D_i(\omega)}$ for
 $i=1,\dots,n$, and $K=K(\sigma)$. Then:\\
 (i) $\Phi(K)\subset K$.\\
 (ii)  If $x\in K\setminus \{0\}$ is a fixed point of   $\Phi$, then $x$ is a positive $\omega$-periodic solution of \eqref{sys}. \\
 (iii) If in addition (H5) holds, $\Phi$ is completely continuous.
\end{lem}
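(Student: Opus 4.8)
The plan is to prove the three items in turn, using Lemma~\ref{lem2.1}, Lemma~\ref{lem2.2} and the $\omega$-periodicity built into the definition of $\Phi$. For (i), since Lemma~\ref{lem2.2} already gives $\Phi(X^+)\subseteq X^+$, only the cone estimate $(\Phi_i x)(t)\ge\sigma_i\|\Phi_i x\|$ needs proof. Writing $F_i(s):=\sum_{j\ne i}a_{ij}(s)x_j(s)+g_i(s,x_{is})\ge0$, which is $\omega$-periodic in $s$, one has $\int_t^{t+\omega}F_i(s)\,ds=\int_0^\omega F_i(s)\,ds=:M_i$, independent of $t$; bounding $\tilde B_i(s,t;x_i)$ between $\underline{B_i}$ and $\overline{B_i}$ (Lemma~\ref{lem2.1}(v)) and $e^{\int_t^s d_i(r)\,dr}$ between $1$ and $e^{D_i(\omega)}$ (legitimate since $d_i\ge0$ and $0\le s-t\le\omega$) gives $\Ga_i(x_i)\underline{B_i}M_i\le(\Phi_i x)(t)\le\Ga_i(x_i)\overline{B_i}e^{D_i(\omega)}M_i$ for all $t$, hence $\|\Phi_i x\|\le\Ga_i(x_i)\overline{B_i}e^{D_i(\omega)}M_i$ and $(\Phi_i x)(t)\ge\underline{B_i}\,\overline{B_i}^{-1}e^{-D_i(\omega)}\|\Phi_i x\|\ge\sigma_i\|\Phi_i x\|$, that is $\Phi x\in K$.

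For (ii), by Lemma~\ref{lem2.2} a fixed point $x\in K\setminus\{0\}$ is a non-negative $\omega$-periodic solution, and $x\in K$ forces $x_i(t)\ge\sigma_i\|x_i\|$, so $x>0$ is equivalent to $\|x_i\|>0$ for every $i$, i.e.\ no component is identically zero. Suppose $x_i\equiv0$ for some $i$; then $x_{is}\equiv0$ and $0=(\Phi_i x)(t)$ together with $\Ga_i(x_i)>0$, $\tilde B_i>0$, $e^{\int_t^s d_i(r)\,dr}>0$ and the non-negativity of the integrand forces $\sum_{j\ne i}a_{ij}(s)x_j(s)+g_i(s,0)=0$ for all $s$. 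Since both summands are $\ge0$ this yields $g_i(\cdot,0)\equiv0$ and $a_{ij}(\cdot)x_j(\cdot)\equiv0$ for all $j\ne i$, which contradicts (H4)(ii): if $\int_0^\omega g_i(s,0)\,ds>0$ the first is impossible, while if $\int_0^\omega a_{ij}(s)\,ds>0$ for all $j\ne i$ then each $a_{ij}$ is strictly positive on some open interval, forcing $x_j$ to vanish there, hence (again since $x_j\in K$) $x_j\equiv0$ for all $j\ne i$ and $x\equiv0$, against $x\ne0$. (For $n=1$, $x\ne0$ already means $\|x_1\|>0$.) Thus $x>0$.

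For (iii) I would establish continuity and compactness of images separately. For continuity, if $x^{(m)}\to x$ in $K$ then $x_i^{(m)}(t_k)\to x_i(t_k)$, so by the continuity of $J_{ik}$ and $\Ga_i$ (Lemma~\ref{lem2.1}(i)) we get $\Ga_i(x_i^{(m)})\to\Ga_i(x_i)$ and $\tilde B_i(s,t;x_i^{(m)})\to\tilde B_i(s,t;x_i)$ uniformly in $(s,t)$ (only finitely many possible finite products, all uniformly bounded by Lemma~\ref{lem2.1}(v)); moreover $\sum_{j\ne i}a_{ij}x_j^{(m)}\to\sum_{j\ne i}a_{ij}x_j$ uniformly and, by (H5), $g_i(\cdot,x_{i\cdot}^{(m)})\to g_i(\cdot,x_{i\cdot})$ uniformly on $[0,\omega]$; inserting these into the formula for $\Phi_i$ and using $e^{\int_t^s d_i(r)\,dr}\le e^{D_i(\omega)}$ and (H4)(i) for uniform bounds gives $\|\Phi x^{(m)}-\Phi x\|\to0$. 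For compactness, on a bounded set $K_{R_0}$ the estimate in step (i) shows $\Phi(K_{R_0})$ is uniformly bounded, and for $t\ne t_k$ one has (as in the proof of Lemma~\ref{lem2.2}) $(\Phi_i x)'(t)=-d_i(t)(\Phi_i x)(t)+F_i(t)$, uniformly bounded over $K_{R_0}$ since $\Phi_i x$, $d_i$ and $F_i$ are (the last by (H4)(i)), while the jumps, located only at the $t_k$, satisfy $(\Phi_i x)(t_k^+)=J_{ik}(x_i(t_k))^{-1}(\Phi_i x)(t_k)$ and are uniformly bounded; a piecewise Arzel\`a--Ascoli argument in $PC_\omega(\R,\R^n)$ (cf.~\cite{BF,SP}) then shows $\Phi(K_{R_0})$ is relatively compact, so $\Phi$ is completely continuous.

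The step I expect to be most delicate is the compactness in (iii): the ambient space $X$ consists only of piecewise continuous functions, so one must invoke the appropriate Arzel\`a--Ascoli criterion for $PC_\omega$ (uniform boundedness together with equicontinuity of the continuous extensions over each of the finitely many subintervals $[t_k,t_{k+1}]$), and it is essential that every $\Phi x$ has its discontinuities confined to the same fixed instants $t_1,\dots,t_p$, with jumps $(J_{ik}(x_i(t_k))^{-1}-1)(\Phi_i x)(t_k)$ controlled uniformly on $K_{R_0}$ via the bounds of Lemma~\ref{lem2.1}(i); this is what permits the subsequence extraction to be carried out piece by piece.
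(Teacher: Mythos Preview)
Your proof is correct and follows essentially the same approach as the paper. For (i) and (iii) the arguments coincide (the paper simply defers (iii) to the scalar case in \cite{BF}, whereas you spell out the continuity via (H5) and the piecewise Arzel\`a--Ascoli argument). For (ii) there is a minor stylistic difference: the paper first fixes an index $i^*$ with $\|x_{i^*}\|=\|x\|>0$ and, for any $i$ with $x_i\equiv0$, derives the contradiction directly from $\int_0^\omega\big(a_{ii^*}(s)x_{i^*}(s)+g_i(s,0)\big)\,ds>0$, while you instead propagate the vanishing ($a_{ij}x_j\equiv0$ plus $a_{ij}\not\equiv0$ and the cone inequality force $x_j\equiv0$ for every $j$) until reaching $x\equiv0$; both routes are equally valid and rest on the same use of (H4)(ii).
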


\begin{proof} 
(i) From Lemma \ref{lem2.2}, $\Phi(X^+)\subset X^+$.
Now, take $x=(x_1,\dots,x_n)\in K$. 
For $\sigma$ chosen as above,
$$(\Phi_i x)(t)\le \Ga_i(x_i)e^{D_i(\omega)}\ol{B_i}\int_0^{\omega}\bigg (\sum_{j\ne i}a_{ij}(s)x_j(s)+g_i(s,x_{is})\bigg)ds$$
and
\begin{equation}\label{2.10}(\Phi_i x)(t)\ge \Ga_i(x_i)\ul{B_i}\int_0^{\omega}\bigg (\sum_{j\ne i}a_{ij}(s)x_j(s)+g_i(s,x_{is})\bigg)ds,\end{equation}
leading to $(\Phi_i x)(t)\ge \sigma_i \|\Phi_i x\|$
for all $i$ and all $t$. Thus, $\Phi(K)\subset K$.
\smal


(ii) 
If $x\in K, x\ne 0$ and $x=\Phi x$, from Lemma \ref{lem2.2} $x(t)$ is a nontrivial $\omega$-periodic solution of \eqref{sys}. If $n=1$, the definition of $K$ implies that $K\setminus \{0\}=K_0$. If $n>1$, there is $i^*\in \{1,\dots,n\}$ such that $x_{i^*}(t)\ge \sigma_{i^*}\|x_{i^*}\|=\sigma_{i^*}\|x\|>0, t\in [0,\omega]$.  For $i\ne i^*$, either $x_i(t)\ge \|x_i\|>0$ for all $t$, or  $x_i=(\Phi_i x)\equiv 0$; in the latter case, from (H4)(ii) it then  follows that 
$$0=(\Phi_i x)(t)\ge \Ga_i(x_i)\ul{B_i}\int_0^{\omega}\Big (a_{ii^*}(s)x_{i^*}(s)+g_i(s,0)\Big )\,ds  >0,$$
for $ t\in [0,\omega]$, which is not possible. Hence $x_i(t)\ge \sigma_i\|x_i\|>0$.
Therefore, all  the components of $x$ are strictly positive on $[0,\omega]$, i.e., $x\in K_0$.
\smal


(iii)  
The proof follows by a straightforward adaptation of the arguments for the scalar case in \cite{BF}, replacing a scalar function $G(t,x)$ by the functions $H_i(t,x)=\sum_{j\ne i} a_{ij}(t)x_j(t)+G_i(t,x_i)\, (1\le i\le n)$, for $G$ as in \eqref{G}. Clearly, the function $H=(H_1,\dots,H_n)$ also satisfies (H5),
hence the proof  in \cite{BF} applies to the present situation.
\end{proof}

\begin{rmk}\label{rmk2.2} As previously mentioned, the role of  (H4)(ii) is  to preclude the existence of nontrivial fixed points  of $\Phi$ with one or more coordinates equal to zero. In this way, it can be replaced by any other assumption with the same outcome. We point out  that some authors \cite{BCZ,TangZou}   have imposed hypotheses and 
employed Krasnoselskii's techniques to some classes  of periodic systems of DDEs (without impulses), which  however  seem to only guarantee  that a  {\it nontrivial}, rather than positive, periodic solution must exist. 
\end{rmk}

\section{Main results}
\setcounter{equation}{0}

In this section, general criteria for the existence of positive periodic solutions of \eqref{sys} are given.

To use the  compressive form of Krasnoselskii's  fixed point theorem,  we impose the assumption:
\begin{itemize}
\item[(H6)]  There are constants $r_0,R_0$ with $0<r_0< R_0 $ and  functions $b_{1i},b_{2i}\in C_\omega^+(\R)$ with  $\int_0^\omega b_{qi}(t)\, dt> 0\, (q=1,2)$, such that
	 for  $i=1,\dots,n$, $x\in K$ and $t\in [0,\omega]$ it holds: 
		\begin{equation}\label{sublinear_bound}
		\begin{split}
		&g_i(t,x_{it})\geq b_{1i}(t)u\quad \text{if}\:\:0< u\leq x_i\leq r_0,\\
		&g_i(t,x_{it})\leq b_{2i}(t)u\quad \text{if}\:\:R_0\leq x_i\leq u.
		\end{split}
		\end{equation}
		\end{itemize}
		
Bearing in mind the behaviour of the nonlinearities in \eqref{sys}  at infinity, when  (H6) holds we say that  \eqref{sys} is {\it sublinear}   -- this is the situation of Mackey-Glass and Nicholson systems, as well as other important models from mathematical biology.

\begin{thm}\label{thm3.1}  Assume (H1)--(H6) and  that, for $b_{1i},b_{2i}$ as in (H6),
 \begin{equation}\label{sublinear0}
 \begin{split}
&\ul{\Ga_i}\, \ul{B_i}\min_{t\in[0,\omega]}\int_t^{t+\omega}e^{\int _t^sd_i(r)\, dr}\Big (\sum_{j\ne i}a_{ij}(s)+b_{1i}(s)\Big )\, ds\ge 1,\\
& \ol{\Ga_i}\, \ol{B_i}\ \max_{t\in[0,\omega]}\int_t^{t+\omega}e^{\int _t^sd_i(r)\, dr}\Big (\sum_{j\ne i}a_{ij}(s)+b_{2i}(s)\Big )\, ds\le 1,\q i=1,\dots,n.
 \end{split}
		\end{equation}
 Then there exists (at least) one positive $\omega$-periodic solution  $x^*(t)$ of \eqref{sys} satisfying
$$ \min_{t\in [0,\omega]}x_i^*(t)\ge \sigma_i\max_{t\in [0,\omega]}x_i^*(t),\q i=1,\dots,n,$$
  for  $0<\sigma_i\le \ul{B_i}\ol{B_i}^{-1}e^{-D_i(\omega)}\ (1\le i\le n)$ as in Lemma \ref{lem2.3}.	
		\end{thm}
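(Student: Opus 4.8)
The plan is to apply the compressive form of the Krasnoselskii fixed point theorem (Theorem~\ref{thmKras}) to the operator $\Phi$ on the cone $K=K(\sigma)$, with $\sigma$ chosen as in Lemma~\ref{lem2.3} so that $\Phi(K)\subset K$ and, by Lemma~\ref{lem2.3}(iii), $\Phi$ is completely continuous. The two radii $r_0<R_0$ furnished by (H6) will play the roles of $r$ and $R$ in Theorem~\ref{thmKras}: near the smaller radius we verify the ``expansion'' alternative (condition 2 of the theorem, the pushing-out estimate using $b_{1i}$), and near the larger radius the ``norm-type'' alternative (condition 1, the $\lambda>1$ estimate using $b_{2i}$). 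Any fixed point the theorem produces lies in $K$ and is nontrivial since $\|x\|\ge r_0>0$, hence by Lemma~\ref{lem2.3}(ii) it is a genuine positive $\omega$-periodic solution, and the cone membership $x\in K(\sigma)$ gives precisely the claimed inequality $\min_t x_i^*(t)\ge\sigma_i\max_t x_i^*(t)$.

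First I would establish condition~1 of Theorem~\ref{thmKras} at level $R=R_0$. Suppose $x\in K$ with $\|x\|=R_0$ and $\Phi x=\lambda x$ for some $\lambda>1$. Then $\lambda x_i=\Phi_i x$ for each $i$; since $\|x\|=R_0$ there is an index $i$ with $\|x_i\|=R_0$, and for that $i$ we have $x_i(t)\ge\sigma_i\|x_i\|$ on $[0,\omega]$ but, more to the point, at the maximizing point $t^*$ we have $x_i(t^*)=R_0\le x_i(t^*)\le\|x_i\|=R_0$, so the hypothesis $g_i(t,x_{it})\le b_{2i}(t)u$ with $u=x_i$ applies along the relevant range. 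Using the representation \eqref{Phi}, the bound $\tilde B_i\le\ol{B_i}$ from Lemma~\ref{lem2.1}(v), $\Ga_i(x_i)\le\ol{\Ga_i}$ from Lemma~\ref{lem2.1}(i), and the bound on $g_i$, together with $x_j(s)\le\|x_j\|\le R_0$ and $x_i(s)$ comparable to $R_0$ from cone membership, one gets
\[
\lambda\, x_i(t)=(\Phi_i x)(t)\le \ol{\Ga_i}\,\ol{B_i}\int_t^{t+\omega}e^{\int_t^s d_i(r)\,dr}\Big(\sum_{j\ne i}a_{ij}(s)+b_{2i}(s)\Big)\,ds\ \cdot\ (\text{sup of }x\text{-values})\le x_i(t),
\]
after the second inequality of \eqref{sublinear0}; evaluating at a maximum point of $x_i$ forces $\lambda\le1$, a contradiction. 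The only delicate point is bookkeeping the factor $R_0$ versus $x_i(s)$ and $x_j(s)$ carefully so that the ratio collapses exactly; the cone constraint and the choice of the maximizing index make this work.

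Next, for condition~2 at level $r=r_0$, take $\psi$ to be any fixed strictly positive constant function in $K\setminus\{0\}$ (e.g.\ $\psi=(1,\dots,1)$, which lies in $K$ since constants satisfy $x_i(t)=\|x_i\|\ge\sigma_i\|x_i\|$). Suppose for contradiction that $x=\Phi x+\lambda\psi$ for some $x\in K$ with $\|x\|=r_0$ and some $\lambda>0$. Then $x_i\ge(\Phi_i x)$ componentwise, so $\min_t x_i(t)\ge\min_t(\Phi_i x)(t)$. For $x\in K$ with $\|x\|=r_0$ we have $0<\sigma_i\|x_i\|\le x_i(t)\le\|x_i\|\le r_0$, so the lower bound $g_i(t,x_{it})\ge b_{1i}(t)u$ holds with $u=x_i$, and using $\tilde B_i\ge\ul{B_i}$, $\Ga_i(x_i)\ge\ul{\Ga_i}$ and $x_i(s),x_j(s)\ge\sigma_\bullet\|x_\bullet\|$, the first inequality in \eqref{sublinear0} gives $\min_t(\Phi_i x)(t)\ge\min_t x_i(t)$ for at least one coordinate realizing the norm (again choosing the right index $i$). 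Then $x_i(t)=(\Phi_i x)(t)+\lambda\psi_i(t)\ge\min_s(\Phi_i x)(s)+\lambda\ge\min_s x_i(s)+\lambda$ for all $t$; taking the minimum over $t$ on the left yields $\min_t x_i(t)\ge\min_t x_i(t)+\lambda$, impossible for $\lambda>0$. The main obstacle, as in the $R_0$ estimate, is that the inequalities in (H6) are stated in terms of a lower barrier variable $u$ with $0<u\le x_i$, so one must take $u=x_i$ and then exploit the cone bound $x_i\ge\sigma_i\|x_i\|$ to turn $\min_t(\Phi_i x)(t)$ into something comparable to $\|x_i\|=\|x\|=r_0$; ensuring the resulting chain of inequalities is tight enough to conclude requires care, but is exactly what hypothesis \eqref{sublinear0} is calibrated to deliver. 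With both conditions of Theorem~\ref{thmKras} verified, $\Phi$ has a fixed point in $K_{r_0,R_0}$, and the earlier lemmas upgrade this to a positive $\omega$-periodic solution with the stated lower envelope bound.
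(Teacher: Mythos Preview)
Your overall strategy---verify the two Krasnoselskii alternatives for $\Phi$ on $K(\sigma)$ and invoke Lemmas~\ref{lem2.2}--\ref{lem2.3}---is the same as the paper's, but both verification steps contain genuine gaps.

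\textbf{The $R$-estimate.} You set $R=R_0$. This does not let you invoke the second line of (H6): that inequality requires $R_0\le x_i$ \emph{pointwise}, whereas for $\|x_i\|=R_0$ cone membership only gives $x_i(t)\ge\sigma_i R_0<R_0$. Your sentence ``at the maximizing point $t^*$ we have $x_i(t^*)=R_0$'' does not help, because $g_i(t,x_{it})$ must be bounded by $b_{2i}(t)u$ for \emph{all} $t$, not just at $t^*$. The paper repairs this by enlarging the outer radius to $R\ge R_0\big(\min_i\sigma_i\big)^{-1}$, so that for the index $i$ with $\|x_i\|=R$ one has $x_i(t)\ge\sigma_iR\ge R_0$ on all of $[0,\omega]$; then (H6) applies with $u=R$ and the second line of \eqref{sublinear0} yields $\|\Phi_ix\|\le R$.

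\textbf{The $r$-estimate.} Here taking $r=r_0$ is admissible for invoking (H6), but your chain of inequalities breaks down at the coupling terms. To bound
\[
(\Phi_i x)(t)\ge \ul{\Ga_i}\,\ul{B_i}\int_t^{t+\omega}e^{\int_t^s d_i}\Big(\sum_{j\ne i}a_{ij}(s)x_j(s)+g_i(s,x_{is})\Big)\,ds
\]
from below via \eqref{sublinear0}, you need a \emph{single} scalar that bounds $x_j(s)$ from below for \emph{every} $j$, not just for the index $i$ you picked; ``$x_j(s)\ge\sigma_j\|x_j\|$'' is useless here because $\|x_j\|$ has no relation to $\min_t x_i(t)$ (and could even be near $0$). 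Choosing ``the coordinate realizing the norm'' is also the wrong move: the contradiction must be extracted where some component is \emph{small}. The paper's fix is to introduce the global minimum $\mu:=\min_{t\in[0,\omega]}\min_{1\le i\le n}x_i(t)$, note that $x=\Phi x+\lambda\mathbf 1$ forces $\mu\ge\lambda>0$, apply the first line of (H6) with the same $u=\mu$ for every $i$ (valid since $0<\mu\le x_i\le r\le r_0$), and obtain $(\Phi_ix)(t)\ge\mu$ for all $i,t$. Picking $(i^*,t^*)$ with $x_{i^*}(t^*)<\mu+\lambda$ then gives $\mu>x_{i^*}(t^*)-\lambda=(\Phi_{i^*}x)(t^*)\ge\mu$, the desired contradiction.
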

\begin{proof} Fix $r_0,R_0$ as in (H6).
Let $R\ge R_0(\min_{1\le i\le n}\sigma_i)^{-1}$ and $x\in K$ with $\|x\|=R$ . 
	Choose $i$ such that $\|x\|=\|x_i\|=R$. For such $i$, we have $x_i(t)\le R$ and $x_i(t)\ge \sigma_i \|x_i\|= \sigma_i R\ge R_0$ for $t\in [0,\omega]$, therefore, from the second inequality  in \eqref{sublinear_bound} we obtain
	$$g_i(t,x_{it})\le b_{2i}(t)R.$$
	Using the properties in Lemma \ref{lem2.1} and \eqref{sublinear0}, we have
 \begin{equation}\label{3.6}\|\Phi_i x\|\le R\, \ol{\Ga_i}\, \ol{B_i}\max_{t\in[0,\omega]}\int_t^{t+\omega} e^{\int _t^sd_i(r)\, dr}\Big [\sum_{j\ne i}a_{ij}(s)+b_{2i}(s))\Big ]\, ds\le R.\end{equation}
In particular, we conclude that $\Phi x\ne \la x$ for all $\la >1$ and $x\in K$ with $\|x\|=R$.
		
		On the other hand, take $r\le \min_{1\le i\le n}\sigma_i r_0$, $\psi\equiv {\bf 1}:=(1,\dots,1)$ and consider any $\la>0$. For  $x\in K$ with $\|x\|=r$, we claim that $x\ne \Phi x+\la \psi$. 
		
		Suppose otherwise that	 there are $\la >0, x \in K$ with $\|x\|=r$ and $x=\Phi x+\la {\bf 1}$. 
		Let $\mu:=\min_{t\in [0,\omega]}\min_{1\le i\le n}x_i(t)$. We first note that, for $t\in [0,\omega], i=1,\dots,n$, we have $0<\la\le \mu \le x_i(t)\le r\le r_0$, thus the first inequality  in \eqref{sublinear_bound} implies
		$$g_i(t,x_{it})\ge b_{1i}(t)\mu,$$
which, together with the first constraint in  \eqref{sublinear0}, yields for all $i=1,\dots,n$ and $t\in [0,\omega]$  that
 \begin{equation}\label{3.7}(\Phi_i x)(t)\ge \mu\, \ul{\Ga_i}\, \ul{B_i}\min_{t\in[0,\omega]}\int_t^{t+\omega} e^{\int _t^sd_i(r)\, dr}\Big [\sum_{j\ne i}a_{ij}(s)+b_{1i}(s)\Big ]\, ds\ge\mu.\end{equation}
Next, choose 
		$t^*\in [0,\omega]$ and $i^*\in \{1,\dots,n\}$ such that $x_{i^*}(t^*)< \mu+\la$. We obtain
\begin{equation*}
\begin{split}
\mu&> x_{i^*}(t^*)-\la=(\Phi_{i^*} x)(t^*)\ge \mu,
 \end{split}
\end{equation*}	
which is not possible.	The claim is proven, thus Theorem \ref{thmKras} provides the existence of a fixed point $x^*$  for $\Phi$ in $K_{r,R}=\{x\in K:r\leq\|x\| \leq R\}$. From Lemma \ref{lem2.3}(ii), this fixed point is a  positive $\omega$-periodic solution of \eqref{sys}.
%
	\end{proof}
		A scaling of the variables allows us to obtain an algebraic variant  of Theorem \ref{thm3.1}, which turns out to be very useful.

\begin{thm}\label{thm3.2} Assume (H1)--(H6) and  that there is $v=(v_1,\dots,v_n)>0$ such that, for $b_{1i},b_{2i}$ as in (H6),
 \begin{equation}\label{sublinear}
 \begin{split}
c_i^0(v):=&\ul{\Ga_i}\, \ul{B_i}\min_{t\in[0,\omega]}\int_t^{t+\omega}e^{\int _t^sd_i(r)\, dr}\Big (\sum_{j\ne i}v_i^{-1}v_ja_{ij}(s)+b_{1i}(s)\Big )\, ds\ge1,\\
C_i^\infty(v):=& \ol{\Ga_i}\, \ol{B_i}\ \max_{t\in[0,\omega]}\int_t^{t+\omega}e^{\int _t^sd_i(r)\, dr}\Big (\sum_{j\ne i}v_i^{-1}v_ja_{ij}(s)+b_{2i}(s)\Big )\, ds\le1,\q i=1,\dots,n.
 \end{split}
 \end{equation}
Then there exists (at least) one positive $\omega$-periodic solution  $x^*(t)$ of \eqref{sys}.	
		\end{thm}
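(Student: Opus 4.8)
The plan is to deduce Theorem~\ref{thm3.2} from Theorem~\ref{thm3.1} by a change of variables that rescales each component. Given $v=(v_1,\dots,v_n)>0$ satisfying \eqref{sublinear}, I would substitute $x_i(t)=v_i y_i(t)$ in \eqref{sys}. Since $d_i$ acts on $x_i$ alone and the impulses $I_{ik}$ are sandwiched between the linear functions $\al_{ik}u$ and $\eta_{ik}u$, the rescaled system has the same form as \eqref{sys}: the new migration coefficients become $\tilde a_{ij}(t)=v_i^{-1}v_j a_{ij}(t)$, the new nonlinearities are $\tilde g_i(t,y_{it})=v_i^{-1}g_i(t,(v_iy_i)_t)$, and the new impulse functions $\tilde I_{ik}(u)=v_i^{-1}I_{ik}(v_iu)$ still satisfy $\al_{ik}u\le \tilde I_{ik}(u)\le\eta_{ik}u$ with the \emph{same} constants $\al_{ik},\eta_{ik}$. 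Consequently $D_i(\omega)$, the bounds $\ul{B_i},\ol{B_i}$, and the quantities $\ul{\Ga_i},\ol{\Ga_i}$ are all unchanged, because these depend only on $d_i$ and on $\al_{ik},\eta_{ik}$, not on the nonlinearities or on $v$.

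Next I would verify that the rescaled system inherits hypotheses (H1)--(H6). Conditions (H1)--(H3) involve only $d_i$ and $\al_{ik},\eta_{ik}$, hence are untouched. For (H4): $\tilde a_{ij}\in C_\omega^+(\R)$ with $\int_0^\omega\tilde a_{ij}>0$ iff $\int_0^\omega a_{ij}>0$; $\tilde g_i\ge 0$, continuous, $\omega$-periodic, bounded on bounded sets; and $\int_0^\omega\tilde g_i(s,0)\,ds=v_i^{-1}\int_0^\omega g_i(s,0)\,ds>0$ iff the original integral is positive, so (H4)(ii) persists. (H5) transfers directly since $\tilde G(t,y)=v_i^{-1}$-componentwise-scaled $G$ composed with the linear (hence uniformly continuous) map $y\mapsto vy$. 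For (H6), with the \emph{same} bounding functions $b_{1i},b_{2i}$ and the rescaled thresholds $\tilde r_0=r_0\min_i v_i^{-1}\wedge$ (something), one checks that $\tilde g_i(t,y_{it})=v_i^{-1}g_i(t,(v_iy_i)_t)\ge v_i^{-1}b_{1i}(t)\,(v_i w)=b_{1i}(t)w$ when $0<w\le y_i$ and $v_i y_i\le r_0$, i.e. $y_i\le r_0/v_i$; similarly for the upper bound with $R_0/v_i$. Taking $r_0'=r_0/\max_i v_i$ and $R_0'=R_0/\min_i v_i$ (adjusting so that $r_0'<R_0'$, which holds after shrinking $r_0$ or enlarging $R_0$ if necessary) gives (H6) for the rescaled system.

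Finally, the conditions \eqref{sublinear0} for the rescaled system read precisely
\[
\ul{\Ga_i}\,\ul{B_i}\min_{t\in[0,\omega]}\int_t^{t+\omega}e^{\int_t^sd_i(r)\,dr}\Big(\sum_{j\ne i}\tilde a_{ij}(s)+b_{1i}(s)\Big)\,ds\ge 1
\]
and the analogous upper inequality, which are exactly $c_i^0(v)\ge 1$ and $C_i^\infty(v)\le 1$, i.e. the hypotheses \eqref{sublinear} of Theorem~\ref{thm3.2}. Hence Theorem~\ref{thm3.1} applies to the rescaled system and yields a positive $\omega$-periodic solution $y^*$; then $x^*=(v_1y_1^*,\dots,v_ny_n^*)$ is a positive $\omega$-periodic solution of the original system \eqref{sys}. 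The main obstacle I anticipate is purely bookkeeping: confirming that the rescaled impulse bounds keep the \emph{same} $\al_{ik},\eta_{ik}$ (so that all the $B$- and $\Ga$-constants are genuinely invariant) and that the thresholds in (H6) can be chosen consistently after rescaling; once that is clear, the reduction is mechanical.
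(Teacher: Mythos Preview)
Your proposal is correct and follows essentially the same route as the paper: perform the componentwise rescaling $x_i=v_iy_i$, observe that the transformed system has migration coefficients $v_i^{-1}v_ja_{ij}$, nonlinearities $\tilde g_i(t,u)=v_i^{-1}g_i(t,v_iu)$ and impulses $\tilde I_{ik}(u)=v_i^{-1}I_{ik}(v_iu)$ obeying the same bounds $\al_{ik},\eta_{ik}$ (so $\ul{B_i},\ol{B_i},\ul{\Ga_i},\ol{\Ga_i}$ are unchanged), and then apply Theorem~\ref{thm3.1}. Your more explicit bookkeeping on the thresholds in (H6) is fine; the paper simply asserts that $\tilde g_i$ inherits \eqref{sublinear_bound}.
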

		
	\begin{proof} 
Effecting the change of variables $\bar x_i=v_i^{-1}x_i\ (1\le i\le n)$ and dropping the bars for simplicity, system \eqref{sys} becomes
\begin{equation}\label{SysX}
\left\{
\begin{array}{ll}
x_i'(t)=-d_i(t) x_i(t) + \displaystyle{\sum_{j \neq i} v_i^{-1}v_ja_{ij}(t) x_j(t) +\tilde g_i(t,x_{it})} \ {\rm for}\ t\ne t_k, \\
x_i(t_k^+)-x_i(t_k)=\tilde I_{ik} (x_i(t_k)), \ k \in \Z, 
\end{array}
\right.  i=1,\ldots,n,
\end{equation}
where $\tilde g_i(t,u)=v_i^{-1}g_i(t,v_iu),\tilde I_{ik}(u)= v_i^{-1} I_{ik}(v_iu)$ for all $i,k$. On the one hand, the functions $\tilde I_{ik}(u)$ satisfy hypotheses (H1)--(H3) with the same constants $\al_{ik},\eta_{ik}$, and $\tilde J_{ik}(u):=\frac{u}{u+\tilde I_{ik}(u)}=J_{ik}(v_iu)\, (u>0)$, for $J_{ik}$ as in
\eqref{B&Js}. On the other hand, if the functions $g_i(t,u)$ satisfy \eqref{sublinear_bound}, then $\tilde g_i(t,u)$ satisfy  \eqref{sublinear_bound} as well. Consequently,  Theorem \ref{thm3.1} implies the result.
		\end{proof}

The superlinear case of \eqref{sys} 	is dealt in a similar way, by using the expansive form of Krasnoselskii's theorem. The proof is omitted.

\begin{thm}\label{thm3.2_super} Assume (H1)--(H5) and	
\item[(H7)] 	There are	constants $r_0,R_0$  with $0<r_0< R_0 $  and  functions $b_{1i},b_{2i}\in C_\omega^+(\R)$ with  $\int_0^\omega b_{qi}(t)\, dt> 0\, (q=1,2)$, such that	
		 for $i=1,\dots,n$,  $x\in K$ and $t\in [0,\omega]$ it holds: 	\begin{equation}\label{superlinear_bound}
		\begin{split}
		&g_i(t,x_{it})\leq b_{1i}(t)u\quad \text{if}\:\:0< x_i\le u\leq r_0,\\
		&g_i(t,x_{it})\geq b_{2i}(t)u\quad \text{if}\:\:x_i\ge u\ge R_0.
		\end{split}
		\end{equation}
If there is a vector $v=(v_1,\dots,v_n)>0$ such that
 \begin{equation}\label{superlinear}
		\begin{split}
& C_i^0(v):=\ol{\Ga_i}\, \ol{B_i}\max_{t\in[0,\omega]}\int_t^{t+\omega} e^{\int _t^sd_i(r)\, dr}\Big (\sum_{j\ne i}v_i^{-1}v_ja_{ij}(s)+b_{1i}(s)\Big )\, ds\le 1,\\
&c_i^\infty(v):=\ul{\Ga_i}\, \ul{B_i}\min_{t\in[0,\omega]}\int_t^{t+\omega} e^{\int _t^sd_i(r)\, dr}\Big (\sum_{j\ne i}v_i^{-1}v_ja_{ij}(s)+b_{2i}(s)\Big )\, ds\ge1,\q i=1,\dots,n.
 \end{split}
		\end{equation}
		then \eqref{sys} has at least one positive $\omega$-periodic solution.
		\end{thm}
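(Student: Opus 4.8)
The plan is to mimic the proof of Theorem \ref{thm3.1} (equivalently Theorem \ref{thm3.2}) but to invoke the \emph{expansive} alternative of Theorem \ref{thmKras}: I would verify hypothesis (1) of Theorem \ref{thmKras} on a \emph{small} sphere $\|x\|=R$ and hypothesis (2) on a \emph{large} sphere $\|x\|=r$, so that the two radii interchange their roles relative to the sublinear case. First, exactly as in the proof of Theorem \ref{thm3.2}, the rescaling $\bar x_i=v_i^{-1}x_i$ turns \eqref{sys} into a system of the form \eqref{SysX} whose impulses still satisfy (H1)--(H3) with the same constants $\al_{ik},\eta_{ik}$ and whose nonlinearities $\tilde g_i(t,u)=v_i^{-1}g_i(t,v_iu)$ still obey \eqref{superlinear_bound}; hence we may assume $v={\bf 1}:=(1,\dots,1)$, i.e. every factor $v_i^{-1}v_j$ in \eqref{superlinear} equals $1$. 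Fix $\sigma_i=\ul{B_i}\,\ol{B_i}^{-1}e^{-D_i(\omega)}$ and the cone $K=K(\sigma)$; by Lemma \ref{lem2.3}, $\Phi(K)\subset K$, $\Phi$ is completely continuous (this uses (H5)), and any fixed point of $\Phi$ in $K\setminus\{0\}$ is a positive $\omega$-periodic solution of \eqref{sys}.

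For hypothesis (1) I would take any $R\in(0,r_0]$ and $x\in K$ with $\|x\|=R$, and choose $i$ with $\|x_i\|=R$. Then $0<x_i(t)\le R\le r_0$ and $x_j(t)\le R$ for all $j$ and $t$, so the first inequality in \eqref{superlinear_bound} used with $u=R$, together with Lemma \ref{lem2.1} and $C_i^0({\bf 1})\le 1$, yields $\|\Phi_i x\|\le R\,C_i^0({\bf 1})\le R$; hence $\Phi x\ne\la x$ for every $\la>1$ and every $x\in K$ with $\|x\|=R$.

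The heart of the matter is hypothesis (2), which I would verify with $\psi={\bf 1}\in K\setminus\{0\}$. Assume for contradiction that $x\in K$, $\|x\|=r$, $\la>0$ and $x=\Phi x+\la{\bf 1}$; then $x\ge\Phi x\ge 0$. Pick $i^*$ with $\|x_{i^*}\|=r$, so $x_{i^*}(t)\ge\sigma_{i^*}r$ on $[0,\omega]$. The key claim is that, for $r$ large enough, the global minimum $\mu:=\min_{t\in[0,\omega]}\min_{1\le i\le n}x_i(t)$ satisfies $\mu\ge R_0$. When (H4)(ii) holds in the coupled form $\int_0^\omega a_{ij}>0$ for all $i\ne j$ --- which, in particular, is the only possibility for the biological models of Section 5, where $g_i(\cdot,0)\equiv 0$ --- this is obtained by propagation: for $i\ne i^*$, keeping only the $j=i^*$ term in $\Phi_i x$ and using $e^{\int_t^s d_i}\ge 1$ gives $x_i(t)\ge(\Phi_i x)(t)\ge\sigma_{i^*}r\,\ul{\Ga_i}\,\ul{B_i}\int_0^\omega a_{ii^*}(s)\,ds$, so every component satisfies $x_i(t)\ge c\,r$ on $[0,\omega]$ for a constant $c>0$ depending only on the data of \eqref{sys}, whence $\mu\ge R_0$ once $r\ge R_0/c$. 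In the complementary case $\int_0^\omega g_i(\cdot,0)>0$ for all $i$, the same conclusion is reached by arguing component-wise for the maximal index, along the lines of the scalar treatment of \cite{BF}, now working with $\min_t x_{i^*}(t)\ge\sigma_{i^*}r$ in place of $\mu$.

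Once $\mu\ge R_0$, the argument closes as in Theorem \ref{thm3.1}: for every $i$ and $s$, the second inequality in \eqref{superlinear_bound} with $u=\mu$ (legitimate since $x_i(s)\ge\mu\ge R_0$) gives $g_i(s,x_{is})\ge b_{2i}(s)\mu$, and since $x_j(s)\ge\mu$ for all $j$, Lemma \ref{lem2.1} and $c_i^\infty({\bf 1})\ge 1$ give $(\Phi_i x)(t)\ge\mu$ for all $i$ and $t$; choosing $i_0,t_0$ with $x_{i_0}(t_0)=\mu$ we obtain the impossible inequality $\mu=x_{i_0}(t_0)=(\Phi_{i_0}x)(t_0)+\la\ge\mu+\la$. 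Therefore $x\ne\Phi x+\la{\bf 1}$ whenever $\|x\|=r$ and $\la>0$; since also $r>R$, Theorem \ref{thmKras} yields a fixed point of $\Phi$ in $\{x\in K:R\le\|x\|\le r\}$, which by Lemma \ref{lem2.3}(ii) is a positive $\omega$-periodic solution of \eqref{sys}. The step I expect to be the genuine obstacle is the estimate $\mu\ge R_0$ in hypothesis (2): in the sublinear case the corresponding quantity only has to be positive --- immediate from $x\ge\la{\bf 1}$ --- whereas here one must exploit the migration terms together with (H4)(ii) to force \emph{all} components of a hypothetical $x=\Phi x+\la{\bf 1}$ lying on a large sphere to be uniformly large, and this is the only place where a new idea beyond the sublinear argument is needed.
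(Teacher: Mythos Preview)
The paper does not actually give a proof of Theorem~\ref{thm3.2_super}: it only states that ``the superlinear case of \eqref{sys} is dealt in a similar way, by using the expansive form of Krasnoselskii's theorem'' and omits all details. Your proposal is a correct completion of that outline and follows precisely the dualization one expects: the rescaling to $v={\bf 1}$, hypothesis~(1) of Theorem~\ref{thmKras} verified on a small sphere via the first inequality in~\eqref{superlinear_bound} and $C_i^0({\bf 1})\le 1$, and hypothesis~(2) on a large sphere via the second inequality and $c_i^\infty({\bf 1})\ge 1$. The propagation estimate
\[
x_i(t)\ge(\Phi_ix)(t)\ge \sigma_{i^*}r\,\ul{\Ga_i}\,\ul{B_i}\int_0^\omega a_{ii^*}(s)\,ds
\]
is exactly the right new ingredient needed to force $\mu\ge R_0$, and the closing contradiction $\mu=(\Phi_{i_0}x)(t_0)+\la\ge\mu+\la$ then mirrors \eqref{3.7}.

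Your worry about the ``complementary case'' of (H4)(ii), however, is unfounded and can simply be dropped. The first inequality in (H7) already forces $g_i(\cdot,0)\equiv 0$: taking $x_i\equiv\vare\in(0,r_0]$ (a constant function, which lies in $K$) and $u=\vare$ gives $g_i(t,\vare)\le b_{1i}(t)\vare$; letting $\vare\to 0^+$ and using the continuity of $g_i$ from (H4)(i) together with $g_i\ge 0$ yields $g_i(t,0)=0$ for all $t$. Consequently, for $n>1$, assumption (H4)(ii) can \emph{only} hold in the coupled form $\int_0^\omega a_{ij}(s)\,ds>0$ for all $i\ne j$, so your propagation argument already covers every admissible situation. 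The vague ``component-wise'' alternative you sketch is therefore unnecessary --- and, incidentally, would not close as written, since discarding the migration terms $a_{i^*j}$ leaves a quantity strictly smaller than $c_{i^*}^\infty({\bf 1})$.
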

		
\begin{rmk}\label{rmk3.1}		
Additionally, variations of Krasnoselskii's theorem, where the compressive and expansive forms are combined, can lead to the existence of more than one positive $\omega$-periodic solution to \eqref{sys}, as in e.g.  \cite{Li_et.al} and several other works.
\end{rmk}
\begin{rmk}\label{rmk3.2'}	Due to the  version of Krasnoselskii's theorem  given in Theorem  \ref{thmKras}, we stress that the equalities to 1 are allowed in all conditions
\eqref{sublinear},\eqref{superlinear}. Hence, even  for the scalar case, the above theorems  lead to improvements of some criteria in \cite{BF}, where the strict inequalities were required.
\end{rmk}	
\begin{rmk}\label{rmk3.2} Under (H1)-(H6), it is clear that the sufficient conditions  expressed by \eqref{sublinear0} or \eqref{sublinear} are not optimal, since one can use sharper estimates for $\Gamma_i(u), \tilde B_i(s,t;u)$, for both  $u$ in the vicinity of 0 and $\infty$. In fact,
at $\infty$ define 
$\dps B_i(t;\infty^+):=\limsup_{u\to\infty}\prod_{k:t_k\in[0,t)}J_{ik}(u), B_i(t;\infty^-):=\liminf_{u\to\infty}\prod_{k:t_k\in[0,t)}J_{ik}(u)$
and
$\tilde{B_i}(s,t;\infty^\pm):=B_i(s;\infty^\pm)B_i(t;\infty^\mp)^{-1}, \Ga_i(\infty^+):=\Big (B_i(\omega;\infty^-)e^{D_i(\omega)}-1\Big)^{-1}.$
It is clear that Theorem \ref{thm3.2} still holds if conditions \eqref{sublinear}  are replaced by
 \begin{equation*}\label{sublinear00}
 \begin{split}
		 &\Ga_i(0)\min_{t\in[0,\omega]}\int_t^{t+\omega} \tilde{B_i}(s,t;0)e^{\int _t^sd_i(r)\, dr}\Big (\sum_{j\ne i}v_i^{-1}v_ja_{ij}(s)+b_{1i}(s)\Big )\, ds\ge1,\\
		 & \Ga_i(\infty^+)\, \max_{t\in[0,\omega]}\int_t^{t+\omega} \tilde{B_i}(s,t;\infty^+)e^{\int _t^sd_i(r)\, dr}\Big (\sum_{j\ne i}v_i^{-1}v_ja_{ij}(s)+b_{2i}(s)\Big )\, ds\le1,\ i=1,\dots,n.
 \end{split}
		\end{equation*}
Though not as sharp as the ones above, the estimates in  \eqref{sublinear} are  much easier to verify in practice. A similar improvement can be effected to the statement for the superlinear case in Theorem \ref{thm3.2_super}.
\end{rmk}

\med

For \eqref{sys}, define  the  $n\times n$ matrices of  functions in $C_\omega^+(\R)$ given by
\begin{equation}\label{D&A}
D(t)=\diag \,  (d_1(t),\dots,d_n(t)),\q  A(t)=\big [a_{ij}(t)\big ],\end{equation}
where as before $a_{ii}(t)\equiv 0$ for all $i$. Criteria for the existence of a positive periodic solution  
 involving either  a pointwise comparison of the functions $d_i(t),a_{ij}(t)$ and $b_{qi}(t)$
  or their integral averages  are  useful in applications. Both these approaches are considered in the next theorem.


\begin{thm}\label{thm3.3} Assume  (H1)--(H6). For $b_{1i}(t),b_{2i}(t)$ as in (H6), define
\begin{equation}\label{BB}
B_1(t)=\diag \, (b_{11}(t),\dots,b_{1n}(t)),\q B_2(t)=\diag \, (b_{21}(t),\dots,b_{2n}(t)),
\end{equation}
for $t\in\R$. With   $D(t),A(t)$ as in \eqref{D&A} and some  vector $v>0$, assume one of conditions:  \begin{itemize}  \item[(a)] either
\begin{equation}\label{H8}
 M_2 \Big [B_2(t)+A(t)\Big ]v\le D(t)v\le M_1 \Big [B_1(t)+A(t)\Big ]v,
\end{equation}
 for 
\begin{equation}\label{mm} 
\begin{split}
M_1=\diag (m_{11},\dots,m_{1n}),\ &M_2=\diag (m_{21},\dots,m_{2n}),\\
 m_{1i}:=\ul{\Ga_i}\, \ul{B_i}(e^{D_i(\omega)}-1),\ &m_{2i}:= \ol{\Ga_i}\, \ol{B_i}(e^{D_i(\omega)}-1),\q i=1,\dots,n;
 \end{split}
\end{equation}
  \item[(b)] or  \begin{equation}\label{H9}
  \int_0^\omega N_2 \Big [B_2(t)+A(t)\Big ]v\, dt\le v\le \int_0^\omega N_1\Big [B_1(t)+A(t)\Big ]v\, dt,\\
\end{equation}  for 
  \begin{equation}\label{nn}
  \begin{split}
 N_1=\diag(n_{11},\dots,n_{1n}),\
  &N_2=(n_{21},\dots,n_{2n}),\\
   n_{1i}:=\ul{\Ga_i}\, \ul{B_i},\ &n_{2i}:= \ol{\Ga_i}\, \ol{B_i}e^{D_i(\omega)},\q i=1,\dots,n.
  \end{split}
  \end{equation}
\end{itemize}
Then, \eqref{sys} has (at least) one positive $\omega$-periodic solution.
\end{thm}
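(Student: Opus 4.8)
The plan is to derive both parts of Theorem \ref{thm3.3} from Theorem \ref{thm3.2}, by showing that either \eqref{H8} or \eqref{H9} forces the conditions \eqref{sublinear} for the very same vector $v>0$. Fix $v=(v_1,\dots,v_n)>0$ as in the statement and, for $q=1,2$ and $i=1,\dots,n$, set the nonnegative $\omega$-periodic functions
\[
h_i^{(q)}(t):=b_{qi}(t)+\sum_{j\ne i}v_i^{-1}v_ja_{ij}(t),
\]
so that $c_i^0(v)=\ul{\Ga_i}\,\ul{B_i}\min_{t\in[0,\omega]}\int_t^{t+\omega}e^{\int_t^sd_i(r)\,dr}h_i^{(1)}(s)\,ds$ and $C_i^\infty(v)=\ol{\Ga_i}\,\ol{B_i}\max_{t\in[0,\omega]}\int_t^{t+\omega}e^{\int_t^sd_i(r)\,dr}h_i^{(2)}(s)\,ds$. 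Two elementary facts about the kernel will do all the work: since $d_i\ge0$ is $\omega$-periodic, one has $1\le e^{\int_t^sd_i(r)\,dr}\le e^{D_i(\omega)}$ for $t\le s\le t+\omega$; and, recognising the integrand as $\tfrac{d}{ds}e^{\int_t^sd_i(r)\,dr}$, one has $\int_t^{t+\omega}e^{\int_t^sd_i(r)\,dr}d_i(s)\,ds=e^{D_i(\omega)}-1$.

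For case (a), I would read \eqref{H8} componentwise and divide the $i$-th line by $v_i>0$ (recall $a_{ii}\equiv0$ and the diagonal of $B_q$ is $(b_{q1},\dots,b_{qn})$), obtaining $m_{2i}\,h_i^{(2)}(t)\le d_i(t)\le m_{1i}\,h_i^{(1)}(t)$ for all $t\in\R$, with $m_{1i},m_{2i}$ as in \eqref{mm}. Multiplying by $e^{\int_t^sd_i(r)\,dr}$ (with $s$ the integration variable) and integrating over $s\in[t,t+\omega]$, the second elementary fact turns the middle integral into $e^{D_i(\omega)}-1$, so that
\[
\int_t^{t+\omega}e^{\int_t^sd_i(r)\,dr}h_i^{(1)}(s)\,ds\ge\frac{e^{D_i(\omega)}-1}{m_{1i}}=\frac{1}{\ul{\Ga_i}\,\ul{B_i}},\qquad
\int_t^{t+\omega}e^{\int_t^sd_i(r)\,dr}h_i^{(2)}(s)\,ds\le\frac{e^{D_i(\omega)}-1}{m_{2i}}=\frac{1}{\ol{\Ga_i}\,\ol{B_i}}
\]
for every $t$; taking $\min_t$ (resp. $\max_t$) gives $c_i^0(v)\ge1$ and $C_i^\infty(v)\le1$, which is precisely \eqref{sublinear}.

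For case (b), reading \eqref{H9} componentwise and dividing by $v_i$ yields $n_{2i}\int_0^\omega h_i^{(2)}(t)\,dt\le1\le n_{1i}\int_0^\omega h_i^{(1)}(t)\,dt$, with $n_{1i},n_{2i}$ as in \eqref{nn}. Using the $\omega$-periodicity of $h_i^{(q)}$ (so $\int_t^{t+\omega}h_i^{(q)}=\int_0^\omega h_i^{(q)}$) together with the pointwise bounds $1\le e^{\int_t^sd_i(r)\,dr}\le e^{D_i(\omega)}$ on $[t,t+\omega]$, one gets $\int_0^\omega h_i^{(1)}(s)\,ds\le\int_t^{t+\omega}e^{\int_t^sd_i(r)\,dr}h_i^{(1)}(s)\,ds$ and $\int_t^{t+\omega}e^{\int_t^sd_i(r)\,dr}h_i^{(2)}(s)\,ds\le e^{D_i(\omega)}\int_0^\omega h_i^{(2)}(s)\,ds$, whence, by \eqref{H9}, the first integral is $\ge1/(\ul{\Ga_i}\,\ul{B_i})$ and the second $\le e^{D_i(\omega)}/(\ol{\Ga_i}\,\ol{B_i}e^{D_i(\omega)})=1/(\ol{\Ga_i}\,\ol{B_i})$; as before this is $c_i^0(v)\ge1$, $C_i^\infty(v)\le1$. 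In either case the hypotheses of Theorem \ref{thm3.2} are met, so \eqref{sys} has at least one positive $\omega$-periodic solution. This argument is computationally routine; the only genuinely delicate point is bookkeeping of the constants and of the directions of the exponential estimates. The factor $e^{D_i(\omega)}-1$ built into $M_1,M_2$ is exactly what cancels $\int_t^{t+\omega}e^{\int_t^sd_i(r)\,dr}d_i(s)\,ds$, while the extra $e^{D_i(\omega)}$ placed in $n_{2i}$ (but not in $n_{1i}$) is exactly what absorbs the upper kernel bound $e^{\int_t^sd_i(r)\,dr}\le e^{D_i(\omega)}$; one must therefore pair the lower kernel bound with the $c_i^0$-estimates and the upper kernel bound with the $C_i^\infty$-estimates, never the other way around.
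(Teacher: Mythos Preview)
Your proof is correct and follows essentially the same approach as the paper's own argument: in both cases you reduce to Theorem~\ref{thm3.2} by reading \eqref{H8} or \eqref{H9} componentwise, and in case~(a) you exploit the identity $\int_t^{t+\omega}e^{\int_t^sd_i(r)\,dr}d_i(s)\,ds=e^{D_i(\omega)}-1$, while in case~(b) you use the kernel bounds $1\le e^{\int_t^sd_i(r)\,dr}\le e^{D_i(\omega)}$, exactly as in the paper. Your closing paragraph on the bookkeeping of constants is a nice addition that the paper leaves implicit.
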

		
		\begin{proof}
	Consider	$ c_i^0(v),C_i^\infty(v)\ (1\le i\le n)$ defined in \eqref{sublinear}. From (a), we have 
	$$\sum_{j\ne i}v_ja_{ij}(s)+v_ib_{1i}(s)\ge m_{1i}^{-1} v_id_i(s),\q
	\sum_{j\ne i}v_ja_{ij}(s)+v_ib_{2i}(s)\le m_{2i}^{-1} v_id_i(s),$$
thus
\begin{equation*}
\begin{split}
 c_i^0(v)&\ge m_{1i}^{-1}\ul{\Ga_i}\, \ul{B_i}\min_{t\in[0,\omega]}\int_t^{t+\omega} e^{\int _t^sd_i(r)\, dr} d_i(s)\, ds =m_{1i}^{-1}
  \ul{\Ga_i}\, \ul{B_i}(e^{D_i(\omega)}-1)= 1,\\
C_i^\infty(v)&\le m_{2i}^{-1} \ol{\Ga_i}\, \ol{B_i}\max_{t\in[0,\omega]}\int_t^{t+\omega} e^{\int _t^sd_i(r)\, dr} d_i(s)\, ds = m_{2i}^{-1}\ol{\Ga_i}\, \ol{B_i}(e^{D_i(\omega)}-1)=1,
\end{split}
\end{equation*}
for $i=1,\dots,n$. If (b) is satisfied, for all $i$ we obtain
\begin{equation*}
\begin{split}
 c_i^0(v)&>v_i^{-1}\ul{\Ga_i}\, \ul{B_i}\int_0^\omega \Big (\sum_{j\ne i}v_ja_{ij}(s)+v_ib_{1i}(s)\Big )\, ds\ge v_i^{-1}\ul{\Ga_i}\, \ul{B_i}v_in_{1i}^{-1}= 1,\\
C_i^\infty(v)&< v_i^{-1} \ol{\Ga_i}\, \ol{B_i} e^{D_i(\omega)} \int_0^\omega \Big (\sum_{j\ne i}v_ja_{ij}(s)+v_ib_{2i}(s)\Big )\, ds\le v_i^{-1} \ol{\Ga_i}\, \ol{B_i} e^{D_i(\omega)} v_in_{2i}^{-1}=1.
\end{split}
\end{equation*}
The conclusion is drawn from Theorem \ref{thm3.2}.
\end{proof}

For nonimpulsive systems
\begin{equation}\label{sys_no}
x_i'(t)=-d_i(t) x_i(t) + \sum_{j \neq i} a_{ij}(t) x_j(t) +g_i(t,x_{it}),\q i=1,\dots,n,
\end{equation}
conditions  for the existence of a  positive $\omega$-periodic solution are obtained by taking $\ul{\Ga_i}=\ol{\Ga_i}\equiv \big (e^{D_i(\omega)}-1\big)^{-1}$ and $\ul{B_i}= \ol{B_i}=1$ in the above theorem, leading to:


		\begin{cor}\label {cor3.1} Assume (H4)--(H6). For the matrices in \eqref{D&A},  \eqref{BB} suppose that for some $v>0$:
		\begin{itemize}  
		\item[(a)] either
	$B_2(t)v\le \big [D(t)-A(t)\big ]v\le B_1(t)v;$
		\vskip 0cm
		\item[(b)] or 
$\left\{
\begin{array}{ll}
 \dps \int_0^\omega \Big [B_2(t)+A(t)\Big ]v\, dt\le   \diag\big(1-e^{-D_1(\omega)},\dots,1-e^{-D_n(\omega)}\big)v\\
  \dps\int_0^\omega \Big [B_1(t)+A(t)\Big ]v\, dt\ge \diag \big(e^{D_1(\omega)}-1,\dots,e^{D_n(\omega)}-1\big)v.
  \end{array}
\right.
$
\end{itemize} 
Then, there exists a positive $\omega$-periodic solution of  \eqref{sys_no}
				\end{cor}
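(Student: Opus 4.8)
The plan is to derive Corollary~\ref{cor3.1} as a direct specialization of Theorem~\ref{thm3.3} to the nonimpulsive case. The key observation is that for \eqref{sys_no} there are no impulses, so in the notation of Lemma~\ref{lem2.1} one has $J_{ik}\equiv 1$, hence $B_i(t;x_i)\equiv 1$, $\tilde B_i(s,t;x_i)\equiv 1$, and consequently $\ul{B_i}=\ol{B_i}=1$ and $\ul{\Ga_i}=\ol{\Ga_i}=\bigl(e^{D_i(\omega)}-1\bigr)^{-1}$ for $i=1,\dots,n$. Substituting these values into the definitions \eqref{mm} gives $m_{1i}=m_{2i}=1$, so $M_1=M_2=I$, and into \eqref{nn} gives $n_{1i}=\bigl(e^{D_i(\omega)}-1\bigr)^{-1}$ and $n_{2i}=e^{D_i(\omega)}\bigl(e^{D_i(\omega)}-1\bigr)^{-1}=\bigl(1-e^{-D_i(\omega)}\bigr)^{-1}$. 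Note that hypotheses (H1)--(H3) are vacuous (or trivially satisfied with $p=0$, $\al_{ik}=\eta_{ik}=0$) in the absence of impulses, so assuming only (H4)--(H6) is enough to invoke Theorem~\ref{thm3.3}.

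First I would record that, with these substitutions, condition (a) of Theorem~\ref{thm3.3}, namely \eqref{H8} with $M_1=M_2=I$, becomes
\begin{equation*}
\bigl[B_2(t)+A(t)\bigr]v\le D(t)v\le \bigl[B_1(t)+A(t)\bigr]v,
\end{equation*}
which, since $D(t)$ is diagonal and $A(t)$ has zero diagonal, rearranges entrywise to $B_2(t)v\le\bigl[D(t)-A(t)\bigr]v\le B_1(t)v$ --- exactly condition (a) of the corollary. Next I would treat condition (b): \eqref{H9} with the diagonal matrices $N_1,N_2$ above reads
\begin{equation*}
\int_0^\omega \bigl[B_2(t)+A(t)\bigr]v\, dt\le N_2^{-1}v,\qquad
\int_0^\omega \bigl[B_1(t)+A(t)\bigr]v\, dt\ge N_1^{-1}v,
\end{equation*}
after multiplying the two inequalities (componentwise, and legitimately since $N_1,N_2$ have positive diagonal entries) by $N_2^{-1}$ and $N_1^{-1}$ respectively. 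Since $N_2^{-1}=\diag\bigl(1-e^{-D_1(\omega)},\dots,1-e^{-D_n(\omega)}\bigr)$ and $N_1^{-1}=\diag\bigl(e^{D_1(\omega)}-1,\dots,e^{D_n(\omega)}-1\bigr)$, these are precisely the two displayed inequalities in part (b) of the corollary.

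Having matched the hypotheses, the conclusion follows immediately: Theorem~\ref{thm3.3} yields a positive $\omega$-periodic solution of \eqref{sys}, and for the nonimpulsive specialization this is a positive $\omega$-periodic solution of \eqref{sys_no}. I do not anticipate a genuine obstacle here; the only point requiring a modicum of care is the bookkeeping of which inequality gets multiplied by $N_1^{-1}$ versus $N_2^{-1}$ and checking that the direction of each inequality is preserved (which it is, because all the scalar factors $e^{D_i(\omega)}-1$, $1-e^{-D_i(\omega)}$, $m_{1i}$, $m_{2i}$ are strictly positive under the standing assumption $D_i(\omega)=\int_0^\omega d_i(s)\,ds>0$ from (H4)(i)). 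One should also note that the diagonal entry of $A(t)$ being zero is what lets the term $A(t)v$ pass unchanged between the "$D(t)-A(t)$" formulation and the "$B_q(t)+A(t)$" formulation; this was already fixed by the convention $a_{ii}\equiv 0$ adopted after Remark~\ref{rmk2.1}.
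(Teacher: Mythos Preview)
Your proposal is correct and follows exactly the same route as the paper: specialize Theorem~\ref{thm3.3} to the nonimpulsive case by observing that $\ul{B_i}=\ol{B_i}=1$ and $\ul{\Ga_i}=\ol{\Ga_i}=(e^{D_i(\omega)}-1)^{-1}$, whence $M_1=M_2=I$ and $N_1^{-1},N_2^{-1}$ take the displayed diagonal forms. Your write-up is in fact more explicit than the paper's one-line proof, including the careful check of inequality directions and the role of the convention $a_{ii}\equiv 0$.
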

\begin{proof} For the system with no impulses  \eqref{sys_no}, we have  $M_1=M_2=I$   and  $N_1^{-1}=\diag\big(e^{D_1(\omega)}-1,\dots,e^{D_n(\omega)}-1\big), N_2^{-1}=\diag\big(1-e^{-D_1(\omega)},\dots,1-e^{-D_n(\omega)}\big)$ for the matrices in Theorem \ref{thm3.3}. 
\end{proof} 

\begin{rmk}\label{rmk3.3} Theorem \ref{thm3.3} and Corollary \ref {cor3.1} take into consideration the sublinear case; for the superlinear case, analised 
in Theorem \ref{thm3.2_super},  similar statements hold.\end{rmk}

From Theorem \ref{thm3.3}, we retrieve some criteria which improve the ones obtained in  \cite{BF} for the particular case of {\it scalar} equations.

\begin{cor}\label{thm3.scalar} Consider the scalar impulsive DDE
\begin{equation}\label{scalar}
\left\{
\begin{array}{ll}
x'(t)=-d(t) x(t) + g(t,x_{t}) \ {\rm for}\ t\ne t_k,  \\
\Delta(x(t_k)):=x(t_k^+)-x_i(t_k)=I_{k} (x(t_k)), \ k \in \Z,
\end{array}
\right. 
\end{equation}
where $(t_k), (I_k)_{k \in \Z}$, 
$d\in C_\omega^+(\R),g:\R\times PC([-\tau,0],\R)\to \R^+$  satisfy the assumptions in (H1)-(H5) (with $i=1$ and $d(t)=d_1(t),g(t,\var)=g_1(t,\var), I_k=I_{1k}$
for $k\in\Z$), and
\begin{itemize}
\item[(H6')]  There are constants $r_0,R_0$ with $0<r_0< R_0 $ and  functions $b_1,b_2\in C_\omega^+(\R)$ with  $\int_0^\omega b_{q}(t)\, dt> 0\, (q=1,2)$, such that for $x\in K$ and $t\in [0,\omega]$,
		\begin{equation}\label{sublinear_scalar}
g(t,x_{t})\geq b_{1}(t)u\q \text{if}\:\:0< u\leq x\leq r_0,\q g(t,x_{t})\leq b_{2}(t)u\q \text{if}\:\:R_0\leq x_i\leq u.		\end{equation}
		\end{itemize}
		With  $\ul{\Ga}=\ul{\Ga_1}, \ol{\Ga}=\ol{\Ga_1}$ and other obvious terminology as above, assume
one of the following conditions:
\begin{itemize}  \item[(a)] $\ol{\Ga}\, \ol{B}(e^{D(\omega)}-1)b_2(t)\le d(t)\le \ul{\Ga}\, \ul{B}(e^{D(\omega)}-1)b_1(t)$;
  \item[(b)]  
  $ \ol{\Ga}\, \ol{B}e^{D(\omega)}\int_0^\omega b_2(t)\, dt\le 1\le \ul{\Ga}\, \ul{B}\int_0^\omega b_1(t)\, dt.$
\end{itemize}
Then, \eqref{scalar} has (at least) one positive $\omega$-periodic solution.
 \end{cor}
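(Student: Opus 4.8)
The plan is to derive Corollary~\ref{thm3.scalar} directly from Theorem~\ref{thm3.3} by specializing to the scalar case $n=1$. When $n=1$, the matrices in \eqref{D&A} collapse: $A(t)\equiv 0$ (an empty off-diagonal), $D(t)=d(t)$, and the diagonal matrices $B_1(t),B_2(t)$ reduce to the scalars $b_1(t),b_2(t)$. Likewise the diagonal matrices $M_1,M_2$ and $N_1,N_2$ of \eqref{mm},\eqref{nn} become the scalars $m_1=\ul{\Ga}\,\ul{B}(e^{D(\omega)}-1)$, $m_2=\ol{\Ga}\,\ol{B}(e^{D(\omega)}-1)$ and $n_1=\ul{\Ga}\,\ul{B}$, $n_2=\ol{\Ga}\,\ol{B}e^{D(\omega)}$. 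Since every vector $v=(v_1)>0$ works and the factor $v_1$ cancels on both sides of \eqref{H8} and \eqref{H9}, we may simply take $v=1$.

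First I would observe that hypotheses (H1)--(H5) for \eqref{scalar} are exactly (H1)--(H5) for \eqref{sys} with $n=1$ (as spelled out in the statement), and that (H6$'$) is precisely (H6) specialized to $n=1$, because the coupling terms $\sum_{j\ne i}a_{ij}$ are absent and the bounds \eqref{sublinear_scalar} coincide with \eqref{sublinear_bound}. Hence all the standing assumptions of Theorem~\ref{thm3.3} are in force. Next, with $v=1$ and $A\equiv 0$, condition (a) of the corollary, namely
$$\ol{\Ga}\,\ol{B}(e^{D(\omega)}-1)b_2(t)\le d(t)\le \ul{\Ga}\,\ul{B}(e^{D(\omega)}-1)b_1(t),$$
is literally $M_2 B_2(t)\le D(t)\le M_1 B_1(t)$, i.e.\ \eqref{H8}. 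Similarly, condition (b) of the corollary reads $n_2\int_0^\omega b_2(t)\,dt\le 1\le n_1\int_0^\omega b_1(t)\,dt$, which is exactly \eqref{H9} with $v=1$ and $A\equiv 0$, since $\int_0^\omega N_q[B_q(t)+A(t)]v\,dt = n_q\int_0^\omega b_q(t)\,dt$ and the middle term $v$ equals $1$. Thus whichever of (a), (b) holds, the corresponding hypothesis of Theorem~\ref{thm3.3} holds, and Theorem~\ref{thm3.3} yields a positive $\omega$-periodic solution of \eqref{scalar}.

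There is essentially no obstacle here: the corollary is a pure specialization, and the only thing to be careful about is the bookkeeping of which autonomous constants ($\ul{\Ga},\ol{\Ga},\ul{B},\ol{B},D(\omega)$) appear where, and the harmless cancellation of the positive scalar $v_1$. One could alternatively remark that the result also follows by repeating the argument of Theorem~\ref{thm3.1}/Theorem~\ref{thm3.2} in the scalar setting, but invoking Theorem~\ref{thm3.3} with $n=1$ is the shortest route. I would also note, as in Remark~\ref{rmk3.2'}, that the equalities to $1$ permitted in \eqref{sublinear},\eqref{superlinear} are what allow the non-strict inequalities in (a) and (b), which is the precise sense in which this improves the scalar criteria of \cite{BF}.

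\begin{proof}
This is the particular case $n=1$ of Theorem~\ref{thm3.3}. Indeed, for \eqref{scalar} we have $A(t)\equiv 0$, $D(t)=d(t)$, and the diagonal matrices of \eqref{BB} reduce to the scalar functions $b_1(t),b_2(t)$, while the constants in \eqref{mm}, \eqref{nn} become
$$m_1=\ul{\Ga}\,\ul{B}(e^{D(\omega)}-1),\quad m_2=\ol{\Ga}\,\ol{B}(e^{D(\omega)}-1),\quad n_1=\ul{\Ga}\,\ul{B},\quad n_2=\ol{\Ga}\,\ol{B}e^{D(\omega)}.$$
Hypotheses (H1)--(H5) for \eqref{scalar} coincide with (H1)--(H5) for \eqref{sys} when $n=1$, and (H6$'$) is exactly (H6) in this case, since the coupling terms $\sum_{j\ne i}a_{ij}$ are absent. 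Taking $v=1$, condition (a) of the present statement is precisely \eqref{H8}, and condition (b) is precisely \eqref{H9} with $A\equiv 0$ and $v=1$. In either case, Theorem~\ref{thm3.3} provides a positive $\omega$-periodic solution of \eqref{scalar}.
\end{proof}
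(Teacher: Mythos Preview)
Your proof is correct and follows exactly the approach intended by the paper: the corollary is stated immediately after the sentence ``From Theorem~\ref{thm3.3}, we retrieve some criteria\ldots'' and no separate proof is given, so specializing Theorem~\ref{thm3.3} to $n=1$ with $A(t)\equiv 0$ and $v=1$ is precisely what the authors have in mind. Your bookkeeping of the constants $m_1,m_2,n_1,n_2$ and the identification of (a),(b) with \eqref{H8},\eqref{H9} is accurate.
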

 
 \begin{rmk}\label{rmk3.5}For a nonimpulsive scalar equation $x'(t)=-d(t) x(t) + g(t,x_{t})$, the conclusion is obtained by taking
 $\ul{\Ga}=\ol{\Ga}=(e^{D(\omega)}-1)^{-1},\ul{B}= \ol{B}=1$ in Corollary \ref{thm3.scalar}, so that the conditions  read as:
 (a) $b_2(t)\le d(t)\le b_1(t)$ for $t\in [0,\omega]$;
 (b)  $ e^{D(\omega)}\int_0^\omega b_2(t)\, dt\le e^{D(\omega)}-1\le \int_0^\omega b_1(t)\, dt.$
\end{rmk}

\begin{exmp} Consider a  Nicholson's blowflies equation
\begin{equation}\label{N_sca}
x'(t)=-d(t)x(t)+p(t)x(t-\tau(t))e^{-x(t-\tau(t))}
\end{equation}
with $d(t)=\sin^2 t,p(t)=3\cos^2 t$ and $\tau\in C_\pi^+(\R)$.  For $h(x)=xe^{-x}$, one has $h'(0)=1, h(\infty)=0$. Fix any $\vare>0$ small. Clearly, \eqref{sublinear_scalar} is satisfied with $b_1(t)=(1-\vare)p(t)$ and $b_2(t)=\vare$. On the other hand, $\int_0^\pi p(t)\, dt=3\pi/2,e^{\int_0^\pi d(t)\, dt}-1=e^{\pi/2}-1\approx 3.81<3\pi/2$, and consequently $\vare$ can be chosen so that condition (b) in Remark \ref{rmk3.5} holds. Thus, \eqref{N_sca} has a positive $\pi$-periodic solution. Note however that condition $p(t)>d(t)$ is not true for all $t>0$ -- this, according  to an assertion in \cite{AB}, should imply that  there is no positive $\omega$-periodic solution for \eqref{N_sca}, which is contradicted by this example.
%
 \end{exmp}

\begin{rmk}\label{rmk3.4}  In view of both the sublinear and superlinear cases of \eqref{sys}, corresponding to the use of the compressive and expansive forms in Theorem \ref{thmKras}, respectively, the procedure in this section can be applied to impulsive $n$-dimensional DDEs where in \eqref{sys}  the functions $d_i(t),a_{ij}(t),g_i(t,\var)$ are nonpositive (instead of nonnegative), with    (H3)  replaced by $ \prod_{i=1}^p(1+\al_{ik})>\exp\left(\int_0^\omega d_i(t)\, dt\right)$. It is also possible to obtain
 generalisations to systems  of the form 
$$x_i'(t)=-d_i(t) x_i(t)h_i(t,x_i(t)) + \displaystyle{\sum_{j=1,j \neq i}^n a_{ij}(t) x_j(t) +g_i(t,x_{it})},\ i=1,\dots,n$$
(with and without impulses), with $h_i(t,u)$  continuous, bounded above and below by positive constants and $\omega$-periodic in $t$,  by a straightforward adjustement of the present technique. Of course, now the functions  $b_{iq}(t)$ in  (H6) should be multiplied by suitable constants. Details are left to the reader.
  \end{rmk}

\begin{exmp}
Consider the family of differential systems with discrete delays and impulses given by:
\begin{equation}\label{LLJZ0}
 \left\{
\begin{array}{ll}
\dps x_i'(t)=-d_i(t)x_i(t)+\sum_{j\ne i} a_{ij}(t)x_j(t)+\sum_{l=1}^mf_{il}(t,x_i(t-\tau_{il}(t)))\,,&  t\ne t_k,  \\
\Delta(x_i(t_k))=I_{ik} (x_i(t_k)),\q k \in \Z,&  i=1,\ldots,n.
\end{array}
\right. 
\end{equation}
Here, we suppose that  $d_i,a_{ij},\tau_{il}\in C_\omega^+(\R)$, with $d_i(t)>0$,
  $f_{il}(t,u)$ are nonnegative, continuous  and  $\omega$-periodic in $t$, and $t_k,I_{ik}(u)$ satisfy hypotheses (H1)--(H3), $i=1,\dots,n, l=1,\dots, m, k\in \Z$.     

  Define  the values  (in $[0,\infty]$) given by the limits
\begin{equation}\label{limitsJW0}
\begin{split}
\mathfrak{f}_i^0= \liminf_{u\to 0^+}\left(\min_{t\in [0,\omega]}\frac{F_i(t,u)}{d_i(t)u} \right),\q \mathfrak{F}_i^0= \limsup_{u\to 0^+}\left(\max_{t\in [0,\omega]}\frac{F_i(t,u)}{d_i(t)u}\right),\\
\mathfrak{f}_i^\infty= \liminf_{u\to \infty}\left(\min_{t\in [0,\omega]}\frac{F_i(t,u)}{d_i(t)u}\right),\q \mathfrak{F}_i^\infty= \limsup_{u\to \infty}\left (\max_{t\in [0,\omega]}\frac{F_i(t,u)}{d_i(t)u}\right),\end{split}\end{equation}
where $$F_i(t,u)=\sum_l f_{il}(t,u)\q {\rm for}\q i=1,\dots,n,$$ and $\mathfrak{f}^0, \mathfrak{F}^0, \mathfrak{f}^\infty, \mathfrak{F}^\infty$  the diagonal matrices
 with diagonal entries
$\mathfrak{f}_i^0,\mathfrak{F}_i^0, \mathfrak{f}_i^\infty,\mathfrak{F}_i^\infty (1\le i\le n)$, respectively.

%


\begin{thm}\label{thm3.6}  Consider  \eqref{LLJZ0} under  the above assumptions, and 
assume also that:\\
(i)  if $n>1$,   either $ \int_0^\omega a_{ij}(s)\, ds>0$ for all $i\ne j$   or
  $\int_0^\omega F_i(s,0)\, ds>0$, for each  $i=1,\dots,n$;\\
(ii) there exists a vector $v>0$ such that either 
\begin{equation}\label{fF_sub}
M_2[\mathfrak{F}^\infty D(t)+A(t)]v<D(t)v<M_1[\mathfrak{f}^0D(t)+A(t)]v \end{equation}
or
\begin{equation}\label{fF_sup}M_1[\mathfrak{f}^\infty D(t)+A(t)]v>D(t)v>M_2[\mathfrak{F}^0D(t)+A(t)]v, \end{equation}
where $M_1,M_2$ are as in \eqref{mm}.
Then, system \eqref{LLJZ0}
 has at least one positive $\omega$-periodic solution.
\end{thm}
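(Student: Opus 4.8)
The plan is to deduce Theorem~\ref{thm3.6} from Theorem~\ref{thm3.3} (in the sublinear case \eqref{fF_sub}) and from its superlinear counterpart sketched in Remark~\ref{rmk3.3} (in the case \eqref{fF_sup}), after checking that \eqref{LLJZ0} is a particular instance of \eqref{sys}. Setting $g_i(t,\var)=\sum_{l=1}^m f_{il}\big(t,\var_i(-\tau_{il}(t))\big)$ for $\var=(\var_1,\dots,\var_n)\in PC$, the maps $g_i$ are nonnegative, continuous and $\omega$-periodic in $t$, and $g=(g_1,\dots,g_n)$ is bounded on bounded sets because each $f_{il}$ is bounded on compacta; together with $d_i>0$ this yields (H4)(i), while (H4)(ii) is precisely condition (i) of the theorem. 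As (H1)--(H3) are assumed, only (H5) remains: if $A\subset K$ is bounded by $M$, all the delayed values $x_i(t-\tau_{il}(t))$ lie in $[0,M]$, each $f_{il}$ is uniformly continuous on $[0,\omega]\times[0,M]$, and $|x_i(t-\tau_{il}(t))-y_i(t-\tau_{il}(t))|\le\|x-y\|$; hence $\max_{t\in[0,\omega]}|G(t,x)-G(t,y)|$ is small whenever $\|x-y\|$ is small, uniformly on $A$, and (H5) holds.

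The next step is to convert the asymptotic ratios in \eqref{limitsJW0} into the sandwich bounds of (H6) (or (H7) in the superlinear case), choosing the comparison functions $b_{1i},b_{2i}$ proportional to $d_i$. Assume \eqref{fF_sub} and fix a small $\vare>0$. Since \eqref{fF_sub} forces $\mathfrak{F}_i^\infty<\infty$, the definitions of $\mathfrak{f}_i^0$ and $\mathfrak{F}_i^\infty$ provide $0<r_0<R_0$ with $F_i(t,u)\ge(\mathfrak{f}_i^0-\vare)\,d_i(t)\,u$ for $t\in[0,\omega]$, $0<u\le r_0$, and $F_i(t,u)\le(\mathfrak{F}_i^\infty+\vare)\,d_i(t)\,u$ for $t\in[0,\omega]$, $u\ge R_0$ (with the evident conventions if some limit is $0$ or $+\infty$, in which case $b_{1i}$, resp.\ $b_{2i}$, is taken to be a small, resp.\ large, positive multiple of $d_i$, still with positive $\omega$-average since $d_i>0$). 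Put $b_{1i}(t)=(\mathfrak{f}_i^0-\vare)\,d_i(t)$ and $b_{2i}(t)=(\mathfrak{F}_i^\infty+\vare)\,d_i(t)$. Because $x\in K$ keeps each delayed value $x_i(t-\tau_{il}(t))$ between $\min_s x_i(s)$ and $\|x_i\|$, summing over $l$ gives \eqref{sublinear_bound} for these $b_{1i},b_{2i}$ exactly as in the proof of Theorem~\ref{thm3.1}; hence (H6) holds.

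Finally, with $B_1(t)=\diag(b_{11}(t),\dots,b_{1n}(t))=(\mathfrak{f}^0-\vare I)D(t)$ and $B_2(t)=(\mathfrak{F}^\infty+\vare I)D(t)$, condition (a) of Theorem~\ref{thm3.3} reads $M_2\big[(\mathfrak{F}^\infty+\vare I)D(t)+A(t)\big]v\le D(t)v\le M_1\big[(\mathfrak{f}^0-\vare I)D(t)+A(t)\big]v$ for $t\in[0,\omega]$. The inequalities in \eqref{fF_sub} are strict and their two sides are continuous in $t$ on the compact interval $[0,\omega]$, so they hold with a uniform gap; therefore, for $\vare$ small enough, the $\vare$-perturbed non-strict inequalities hold throughout $[0,\omega]$, and Theorem~\ref{thm3.3}(a) delivers a positive $\omega$-periodic solution of \eqref{LLJZ0}. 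The case \eqref{fF_sup} is entirely symmetric: one uses $\mathfrak{F}^0$ and $\mathfrak{f}^\infty$ to build $b_{1i},b_{2i}$ satisfying (H7), and invokes the superlinear analogue of Theorem~\ref{thm3.3} (Theorem~\ref{thm3.2_super} combined with the pointwise comparison, as announced in Remark~\ref{rmk3.3}).

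I expect no structural difficulty: the statement is essentially a ready-to-use reformulation of Theorem~\ref{thm3.3} for systems with discrete delays. The two points that call for a little care are the verification of (H5) for the delayed composite nonlinearity $\sum_l f_{il}(t,x_i(t-\tau_{il}(t)))$, and the bookkeeping near the degenerate values $\mathfrak{f}_i^0,\mathfrak{F}_i^0\in\{0,\infty\}$ and $\mathfrak{f}_i^\infty,\mathfrak{F}_i^\infty\in\{0,\infty\}$ — one must keep the chosen $b_{1i},b_{2i}$ admissible (positive $\omega$-average) and make sure all the inequalities survive the passage $\vare\to0^+$.
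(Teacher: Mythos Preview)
Your proof is correct and follows essentially the same route as the paper's: recast \eqref{LLJZ0} as an instance of \eqref{sys}, verify (H4)--(H5), then in the sublinear case take $b_{1i}(t)=(\mathfrak{f}_i^0-\vare)d_i(t)$ and $b_{2i}(t)=(\mathfrak{F}_i^\infty+\vare)d_i(t)$, shrink $\vare$ until the strict inequalities \eqref{fF_sub} absorb the perturbation, and apply Theorem~\ref{thm3.3}(a) (with the symmetric argument via Remark~\ref{rmk3.3} in the superlinear case). You supply more explicit detail on (H5) and on the handling of the degenerate values $0,\infty$ for the limits, but the logical skeleton is identical to the paper's.
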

\begin{proof} System  \eqref{LLJZ0} has the form \eqref{sys} with
$g_i(t,x_{it})=\sum_{l=1}^mf_{il}(t,x_i(t-\tau_{il}(t)))$. From (i) and since $f_{il}(t,u)$ are uniformly continuous on bounded sets of $[0,\omega]\times\R$, clearly (H4),(H5) are satisfied.
 

Assume \eqref{fF_sub}, for some $v=(v_1,\dots,v_n)>0$. In particular, this implies that $\mathfrak{f}_i^0>0$.
For any fixed  $\vare\in (0,\mathfrak{f}_i^0)$, let $0<r_0<R_0$ be such that, for  $1\le i\le n$ and $t\in [0,\omega]$, we have
$F_i(t,u)\le (\mathfrak{F}_i^\infty+\vare)d_i(t)u$ for $u\ge R_0$ and $F_i(t,u)\ge (\mathfrak{f}_i^0-\vare)d_i(t)u$ for $0<u\le r_0$. Then, (H6) is satisfied with
$b_{2i}(t)= (\mathfrak{F}_i^\infty+\vare)d_i(t), b_{1i}(t)=(\mathfrak{f}_i^0-\vare)d_i(t).$
Let $\vare$ be sufficiently small so that 
$$m_{2i}[v_i(\mathfrak{F}_i^\infty +\vare)d_i(t)+\sum_j v_ja_{ij}(t)]<v_id_i(t)<m_{1i}[v_i(\mathfrak{f}_i^0-\vare) d_i(t)+\sum_j v_ja_{ij}(t)]$$
  for all $i$ and $t$. The conclusion follows from Theorem \ref{thm3.3}(a).
The superlinear case, where \eqref{fF_sup} holds, is handled in a similar way.
\end{proof}

For  \eqref{LLJZ0} without impulses, as $M_1=M_2=I$ in the above statement, we obtain:

\begin{cor}\label {cor3.3}For the nonimpulsive version of \eqref{LLJZ0},
\begin{equation}\label{LLJZ_no}
 \dps x_i'(t)=-d_i(t)x_i(t)+\sum_{j\ne i} a_{ij}(t)x_j(t)+\sum_{l=1}^mf_{il}(t,x_i(t-\tau_{il}(t)))\,,   i=1,\ldots,n,
\end{equation}
with $d_i,a_{ij},\tau_{il}, f_{il}(t,u)$ as before, assume (i) in the above theorem.
Then, there exists a positive $\omega$-periodic solution if there is $v>0$ such that either 
$$\mathfrak{F}_i^\infty d_i(t)<d_i(t)-\sum_jv_i^{-1}v_ya_{ij}(t)<\mathfrak{f}_i^0d _i(t)\ (1\le i\le n)$$
 or 
 $$\mathfrak{f}_i^\infty d_i(t)>d_i(t)-\sum_jv_i^{-1}v_ya_{ij}(t)>\mathfrak{F}_i^0d_i(t)\ (1\le i\le n).$$ 
 \end{cor}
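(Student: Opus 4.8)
The plan is to deduce Corollary \ref{cor3.3} directly from Theorem \ref{thm3.6} by specializing the latter to the impulse-free case. First I would observe that for \eqref{LLJZ_no}, which is \eqref{LLJZ0} with all impulse functions identically zero, one has $\al_{ik}=\eta_{ik}=0$ and hence $J_{ik}\equiv 1$ for every $i,k$; consequently the auxiliary quantities of Section~2 degenerate, namely $B_i(t;x_i)\equiv 1$, $\ul{B_i}=\ol{B_i}=1$ and $\ul{\Ga_i}=\ol{\Ga_i}=(e^{D_i(\omega)}-1)^{-1}$. Substituting these into the definitions \eqref{mm} gives $m_{1i}=\ul{\Ga_i}\,\ul{B_i}(e^{D_i(\omega)}-1)=1$ and, in the same way, $m_{2i}=1$ for all $i$, so that $M_1=M_2=I$. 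Since hypothesis (i) of Corollary \ref{cor3.3} is exactly hypothesis (i) of Theorem \ref{thm3.6} and all the other standing assumptions on $d_i,a_{ij},\tau_{il},f_{il}$ transfer verbatim, Theorem \ref{thm3.6} is applicable to \eqref{LLJZ_no}.

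It then only remains to verify that, once $M_1=M_2=I$, the two displayed alternatives of the corollary are precisely the componentwise reading of \eqref{fF_sub} and \eqref{fF_sup}. Fixing $v=(v_1,\dots,v_n)>0$ and recalling $a_{ii}\equiv 0$, the $i$-th coordinate of $[\mathfrak{F}^\infty D(t)+A(t)]v$ is $\mathfrak{F}_i^\infty d_i(t)v_i+\sum_j a_{ij}(t)v_j$, that of $D(t)v$ is $d_i(t)v_i$, and that of $[\mathfrak{f}^0 D(t)+A(t)]v$ is $\mathfrak{f}_i^0 d_i(t)v_i+\sum_j a_{ij}(t)v_j$. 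Dividing the resulting scalar inequalities by $v_i>0$ and isolating the migration term in the middle turns \eqref{fF_sub} into
$$\mathfrak{F}_i^\infty d_i(t)<d_i(t)-\sum_j v_i^{-1}v_j a_{ij}(t)<\mathfrak{f}_i^0 d_i(t),\qquad i=1,\dots,n,$$
which is the first condition of the corollary; the identical manipulation applied to \eqref{fF_sup} yields the second one.

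I do not expect any real difficulty here: the corollary is a bookkeeping specialization, and the only points requiring care are checking that the impulse-related constants of Section~2 indeed trivialize so that $M_1=M_2=I$, and performing the coordinatewise translation of the matrix inequalities. With those two routine verifications in place, the existence of a positive $\omega$-periodic solution of \eqref{LLJZ_no} follows at once from Theorem \ref{thm3.6}.
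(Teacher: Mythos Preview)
Your proposal is correct and follows exactly the paper's own approach: the paper simply notes that for \eqref{LLJZ0} without impulses one has $M_1=M_2=I$, whence Theorem~\ref{thm3.6} immediately yields the corollary. Your writeup just makes explicit the trivialisation of the impulse constants and the componentwise rewriting of \eqref{fF_sub}--\eqref{fF_sup}, which is precisely the intended derivation.
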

 Theorem \ref{thm3.6}   recovers the criteria in  \cite{BF}
for the scalar version  of  \eqref{LLJZ0} with a single delay,  
\begin{equation}\label{LLJZ_sc}x'(t)=-d(t)x(t)+f(t,x(t-\tau(t)))\ (t\ne t_k),\q \Delta(x(t_k))=I_{k} (x(t_k))\ (k\in\Z).\end{equation}  
See  also  \cite{Li_et.al,Yan07,ZhangFeng} for the existence of positive periodic solutions for  \eqref{LLJZ_sc}. Note however that Li et al. \cite{Li_et.al} consider the scalar model \eqref{LLJZ_sc} only with {\it nonnegative} impulsive functions $I_k(u)\ge 0$; in this way, the criteria in  \cite{BF,Li_et.al} are not always comparable, as explained in \cite{BF}. 
\end{exmp}

 \section{Systems with  bounded nonlinearities}
%
%
%
%
%
As an illustration with relevant applications, we study some classes of systems with bounded nonlinearities. 
Consider the following two families of  impulsive systems:  \begin{equation}\label{Ex3.1}
  \begin{split}
&x_i'(t)=-d_i(t)x_i(t)+\sum_{j\ne i} a_{ij}(t)x_j(t)\\
&\hskip 10mm +\sum_{l=1}^m \be_{il}(t)  \int_{t-\tau_{il}(t)}^t\ga_{il}(s)h_{il}(s,x_i(s))\,  d_s\nu_{il}(t,s),\  t\ne t_k,\\
&\Delta(x_i(t_k))=I_{ik} (x_i(t_k)),\q k \in \Z,\q   i=1,\ldots,n,
\end{split}
\end{equation}
and
\begin{equation}\label{Ex3.2}
\begin{array}{ll}
x_i'(t)=-d_i(t) x_i(t) + \displaystyle\sum_{j \neq i}a_{ij}(t) x_j(t) + \sum_{l=1}^m \beta_{il}(t)h_{il}\bigg (t, \int_{t-\tau_{il}(t)}^t\!\! \ga_{il}(s)x_i(s) \, d_s\nu_{il}(t,s)\bigg), \\
\Delta(x_i(t_k))=I_{ik} (x_i(t_k)), \ k \in \Z,\q \ i=1,\ldots,n,
\end{array}
\end{equation}
 where:  
\begin{itemize} 
\item[{\bf (h1)}] for  $i,j\in \{1,\dots,n\},l\in \{1,\dots,m\}$,
 $d_i,a_{ij},\tau_{il},\be_{il},\ga_{il}\in C_\omega^+(\R)$, with $\tau_{il}$ bounded, $d_i\not\equiv 0$ and $a_{ij}\not\equiv 0$ for $i\ne j$; 
  $\nu_{il}(t,s)$ are {\it non-decreasing}  in $s$, continuous and  $\omega$-periodic in $t$;
$h_{ik}(t,u)$ are continuous and  $\omega$-periodic in $t$;
\item[{\bf (h2)}]  
for  $i\in \{1,\dots,n\},l\in \{1,\dots,m\}$,
$h_{il}(t,u)$ are {\it bounded} on $\R\times \R^+$ and
 \begin{equation}\label{bi} b_i(t):=\sum_{l=1}^m \be_{il}(t) \int_{t-\tau_{il}(t)}^t\ga_{il}(s)\, d_s\nu_{il}(t,s)> 0;\end{equation}
\item[{\bf (h3)}] for  $i\in \{1,\dots,n\}$, there exist continuous functions $h_i:\R^+\to\R^+$ with   $h_i(0)=0,h_i'(0)=1$, $h_i(u)>0$ for $u>0$, and such that
$$h_{il}(t,u)\ge h_i(u),\q {\rm}\q t\in\R,u\ge 0,l=1,\dots,m;$$ 
\item[{\bf (h4)}] the sequences $(t_k)_{k\in \Z},(I_{ik})_{k\in \Z}$ satisfy (H1)--(H3), $i=1,\dots,n$.
\end{itemize}

Here,   the phase space is $PC=PC([-\tau,0],\R^n)$ with  $\tau=\displaystyle\max_{i,l}\max_{t \in [0,\omega]} \tau_{il}(t)$. However, the situation can be generalised in order to include DDEs with  infinite delay, in which case $t-\tau_{il}(t)$ are replaced by $-\infty$ in the integrals in \eqref{Ex3.1},\eqref{Ex3.2} and  \eqref{bi}.
 
 For $n\times n$ matrix-valued $\omega$-periodic functions $M(t),N(t)$ and $v\in\R^n$, we write
$$M(t)v\le_{\not\equiv}N(t)v$$ if $M(t)v\le N(t)v$ on $ [0,\omega]$ and, for each $i=1,\dots,n$,   there
 is $t_i\in [0,\omega]$ for which
$(M(t_i)v)_i< (N(t_i)v)_i$. The symbol $\ge_{\not\equiv}$ has an analogous meaning.

 \begin{thm}\label{thm3.4} Consider either  \eqref{Ex3.1} or \eqref{Ex3.2}, and assume {\bf (h1)}--{\bf (h4)}. Suppose also that for some $v=(v_1,\dots,v_n)>0$ it holds
 \begin{equation}\label{constantsEx3.1}
 \begin{split}
& \ol{\Ga_i}\, \ol{B_i}\ \max_{t\in[0,\omega]}\int_t^{t+\omega}e^{\int _t^sd_i(r)\, dr}\Big (\sum_{j\ne i}v_i^{-1}v_ja_{ij}(s)\Big )\, ds<1,\\
&\ul{\Ga_i}\, \ul{B_i}\min_{t\in[0,\omega]}\int_t^{t+\omega}e^{\int _t^sd_i(r)\, dr}\Big (\sum_{j\ne i}v_i^{-1}v_ja_{ij}(s)+b_{i}(s)\Big )\, ds>1,\q i=1,\dots,n.
 \end{split}
 \end{equation}
 Then, the system has at least one  positive $\omega$-periodic solution.
   In particular,  for   $M_i,N_i\ (i=1,2)$ as in \eqref{mm}, \eqref{nn} and $B(t)=\diag \, (b_{1}(t),\dots,b_{n}(t))$, this is the case if,  for some $v>0$, either
   \begin{equation}\label{H8Ex3.1}
 M_2 A(t)v\le_{\not\equiv} D(t)v\le_{\not\equiv}M_1 \Big [B(t)+A(t)\Big ]v,
\end{equation}
 or  \begin{equation}\label{H9Ex3.1}
  \int_0^\omega N_2A(t)v\, dt\le v\le \int_0^\omega N_1\Big [B(t)+A(t)\Big ]v\, dt.\\
\end{equation} \end{thm}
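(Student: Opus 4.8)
The plan is to recognize that both systems \eqref{Ex3.1} and \eqref{Ex3.2} are special instances of the abstract system \eqref{sys} with suitable identifications of the production functions $g_i$, and then to verify that the hypotheses (H1)--(H6) hold so that Theorem \ref{thm3.2} (in its sublinear form) applies. Concretely, for \eqref{Ex3.1} set
\[
g_i(t,x_{it})=\sum_{l=1}^m \be_{il}(t)\int_{t-\tau_{il}(t)}^t \ga_{il}(s)h_{il}(s,x_i(s))\,d_s\nu_{il}(t,s),
\]
and for \eqref{Ex3.2} set
\[
g_i(t,x_{it})=\sum_{l=1}^m \be_{il}(t)h_{il}\Big(t,\int_{t-\tau_{il}(t)}^t \ga_{il}(s)x_i(s)\,d_s\nu_{il}(t,s)\Big).
\]
First I would check that under {\bf (h1)}--{\bf (h2)} these $g_i$ are continuous, nonnegative, $\omega$-periodic in $t$ and bounded on bounded sets (indeed $g_i$ is globally bounded since $h_{il}$ is bounded on $\R\times\R^+$), so (H4)(i) holds; condition {\bf (h1)} forces $a_{ij}\not\equiv 0$ for $i\ne j$, giving (H4)(ii); and the uniform equicontinuity (H5) follows from the continuity of the data and the boundedness of the delays $\tau_{il}$, exactly as in the scalar arguments of \cite{BF}. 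Hypothesis {\bf (h4)} gives (H1)--(H3).

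Next I would produce the functions $b_{1i},b_{2i}$ required by (H6). For the lower bound near zero, I use {\bf (h3)}: since $h_{il}(t,u)\ge h_i(u)$ with $h_i(0)=0$, $h_i'(0)=1$ and $h_i(u)>0$ for $u>0$, for any small $\vare>0$ there is $r_0>0$ such that $h_i(u)\ge(1-\vare)u$ for $0<u\le r_0$; hence for $x\in K$ with $0<u\le x_i\le r_0$ (noting $x_i(s)\le\|x_i\|$ and $x_i(s)\ge\sigma_i\|x_i\|$, so the argument of $h_{il}$ stays in the relevant range up to a harmless constant factor absorbed into $\vare$, and in the \eqref{Ex3.2} case one uses that the integral $\int_{t-\tau_{il}(t)}^t\ga_{il}(s)x_i(s)d_s\nu_{il}(t,s)$ is comparable to $u\,b_i(t)/\be_{il}(t)$-type quantities via the definition \eqref{bi}) one gets $g_i(t,x_{it})\ge b_{1i}(t)u$ with $b_{1i}(t)=(1-\vare)b_i(t)$, where $b_i$ is as in \eqref{bi}; note $\int_0^\omega b_{1i}>0$ by {\bf (h2)}. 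For the upper bound at infinity, since $h_{il}$ is globally bounded, say $|h_{il}|\le H$, we have $g_i(t,x_{it})\le H\sum_l\be_{il}(t)\int\ga_{il}(s)d_s\nu_{il}(t,s)\le (H/R_0)\big(\sum_l\be_{il}(t)\int\ga_{il}\,d_s\nu_{il}\big)u$ whenever $u\ge x_i\ge R_0$, so $b_{2i}(t)$ can be taken as a small multiple of $b_i(t)$ by choosing $R_0$ large; again $\int_0^\omega b_{2i}>0$. With these $b_{1i},b_{2i}$, the first condition in \eqref{sublinear} of Theorem \ref{thm3.2} becomes exactly the second inequality in \eqref{constantsEx3.1} (after letting $\vare\to0$, permissible because equality to $1$ is allowed and we have the strict inequality $>1$ to spare), and the second condition in \eqref{sublinear} becomes the first inequality in \eqref{constantsEx3.1} once $R_0$ is taken large enough that $b_{2i}$ is negligible, again using the strict inequality $<1$. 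Thus Theorem \ref{thm3.2} yields a positive $\omega$-periodic solution.

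Finally, for the ``in particular'' clauses I would argue exactly as in the proof of Theorem \ref{thm3.3}: condition \eqref{H8Ex3.1} says componentwise that $\sum_{j\ne i}v_ja_{ij}(s)\le m_{2i}^{-1}v_id_i(s)$ and $\sum_{j\ne i}v_ja_{ij}(s)+v_ib_i(s)\ge m_{1i}^{-1}v_id_i(s)$, which when plugged into the integrals and combined with $\min_t\int_t^{t+\omega}e^{\int_t^s d_i}d_i(s)ds=\max_t\int_t^{t+\omega}e^{\int_t^s d_i}d_i(s)ds=e^{D_i(\omega)}-1$ gives precisely \eqref{constantsEx3.1} with the $\le_{\not\equiv},\ge_{\not\equiv}$ ensuring the strictness at one point suffices to make the integral inequalities strict; similarly \eqref{H9Ex3.1} reproduces \eqref{constantsEx3.1} via the average comparison using $n_{1i}=\ul{\Ga_i}\ul{B_i}$, $n_{2i}=\ol{\Ga_i}\ol{B_i}e^{D_i(\omega)}$ and $e^{\int_t^s d_i(r)dr}\ge 1$. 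The main obstacle I anticipate is a careful, case-specific verification that the argument of $h_{il}$ in \eqref{Ex3.1} versus \eqref{Ex3.2} lies in the correct range to invoke {\bf (h3)} and the global bound, i.e., tracking how the cone condition $x\in K$ and the weight $b_i(t)$ in \eqref{bi} interact with the Stieltjes integral so that the lower linear bound $b_{1i}(t)u$ is genuinely valid uniformly for $x\in K$; this is routine but must be done separately for the two models, and the $\vare\to0$ limit together with the slack provided by the strict inequalities in \eqref{constantsEx3.1} is what makes it go through cleanly.
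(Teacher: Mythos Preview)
Your proposal is correct and follows essentially the same route as the paper: identify the systems as instances of \eqref{sys}, verify (H1)--(H5), manufacture $b_{1i},b_{2i}$ for (H6) using {\bf(h3)} near zero and boundedness of $h_{il}$ at infinity, then invoke Theorem~\ref{thm3.2} and the argument of Theorem~\ref{thm3.3} for the ``in particular'' clauses.

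Two minor simplifications you are missing relative to the paper. First, for the upper bound the paper simply takes $b_{2i}(t)\equiv\vare$ (a small constant): since each $g_i$ is globally bounded by some $M_0$, one has $g_i(t,x_{it})\le M_0\le\vare u$ whenever $u\ge R_0:=M_0/\vare$, and then the first inequality in \eqref{constantsEx3.1} absorbs this constant term for $\vare$ small. There is no need to involve $b_i(t)$ here. Second, for the lower bound the paper does not use the cone estimate $x_i\ge\sigma_i\|x_i\|$ at all: from $0<u\le x_i(s)\le r_0$ and $h_i(w)\ge(1-\vare)w$ on $(0,r_0]$ one gets $h_{il}(s,x_i(s))\ge h_i(x_i(s))\ge(1-\vare)x_i(s)\ge(1-\vare)u$ directly, and the monotonicity of $\nu_{il}(t,\cdot)$ then yields $g_i(t,x_{it})\ge(1-\vare)b_i(t)u$ for \eqref{Ex3.1}; the case \eqref{Ex3.2} is handled analogously after noting the integral argument of $h_i$ lies in $[u\!\int\!\ga_{il}\,d_s\nu_{il},\, r_0\!\int\!\ga_{il}\,d_s\nu_{il}]$, which can be made small by shrinking $r_0$. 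So your anticipated ``harmless constant factor'' is in fact unnecessary.
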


\begin{proof} Systems \eqref{Ex3.1}, \eqref{Ex3.2} are particular cases  of   \eqref{sys} with, respectively
\begin{eqnarray}
	g_i(t,x_{it})=\sum_{l=1}^m \be_{il}(t)  \int_{t-\tau_{ik}(t)}^t\ga_{il}(s)h_{il}(s,x_i(s))\,  d_s\nu_{il}(t,s),\label{bdd_distdelay1} \q  i=1,\dots,n, \\
	\nonumber\\
	g_i(t,x_{it})=\sum_{l=1}^m \beta_{il}(t)h_{il}\Big(t, \int_{t-\tau_{il}(t)}^t \ga_{il}(s)x_i(s) \, d_s\nu_{il}(t,s)\Big),\label{bdd_distdelay2}\q  i=1,\dots,n.
	\end{eqnarray}
From the above conditions {\bf (h1)}-{\bf (h2)},   the nonlinearities \eqref{bdd_distdelay1}, \eqref{bdd_distdelay2}  are  {\it bounded} and (H4)-(H5) are fulfilled.  Moreover, the boundedness of all $h_{il}$ also implies that,  in both cases, for any $\vare>0$ there exists $R_0>0$ large, such that
$g_i(t,x_{it})\le \vare u$ for $R_0\le x_i\le u.$

For  \eqref{Ex3.1}, {\bf (h3)}
 implies that,  for any fixed $\vare>0$, there exists $r_0>0$ such that for $0<u\le x_i\le r_0$ we have $h_{il}(s,x_i(s))\ge h_i(u)\ge (1-\vare)u$. Since $\nu_{il}(t,s)$ are nondecreasing in $s$,
we have $g_i(t,x_{it})\ge (1-\vare)b_i(t)u$ for $ u\le x_i\le r_0$. Hence,
 (H6) holds with $b_{1i}(t)=(1-\vare)b_i(t)$ and $b_{21}(t)=\vare$, where $b_i(t)$ is defined in  \eqref{bi}. From \eqref{constantsEx3.1}, for $\vare>0$ sufficiently small we have
\begin{equation}\label{sublinearEx3.1}
 \begin{split}
c_i^0(v)&=\ul{\Ga_i}\, \ul{B_i}\min_{t\in[0,\omega]}\int_t^{t+\omega}e^{\int _t^sd_i(r)\, dr}\Big (\sum_{j\ne i}v_i^{-1}v_ja_{ij}(s)+(1-\vare) b_i(s)\Big )\, ds> 1,\\
C_i^\infty(v)&=\ol{\Ga_i}\, \ol{B_i}\ \max_{t\in[0,\omega]}\int_t^{t+\omega}e^{\int _t^sd_i(r)\, dr}\Big (\sum_{j\ne i}v_i^{-1}v_ja_{ij}(s)+\vare\Big )\, ds< 1,\q i=1,\dots,n,
 \end{split}
 \end{equation} 
hence \eqref{sublinear} holds.
 Thus, Theorem \ref{thm3.2}  provides  the existence of at least one positive periodic solution.
 
 Moreover, in \eqref{BB} we obtain
$
B_1(t)=(1-\vare)B(t), B_2(t)=\vare I.
$   Under conditions \eqref{H8Ex3.1},  note  that
\begin{equation*}
 \begin{split}\min_{t\in[0,\omega]}\int_t^{t+\omega}e^{\int _t^sd_i(r)\, dr}\Big (\sum_{j\ne i}v_i^{-1}v_ja_{ij}(s)+ b_i(s)\Big )\, ds&>m_{1i}^{-1}\min_{t\in[0,\omega]}\int_t^{t+\omega}e^{\int _t^sd_i(r)\, dr}d_i(s)\, ds=(\ul{\Ga_i}\, \ul{B_i})^{-1},\\
\max_{t\in[0,\omega]}\int_t^{t+\omega}e^{\int _t^sd_i(r)\, dr}\Big (\sum_{j\ne i}v_i^{-1}v_ja_{ij}(s)\Big )\, ds&<m_{2i}^{-1}\max_{t\in[0,\omega]}\int_t^{t+\omega}e^{\int _t^sd_i(r)\, dr}d_i(s)\, ds=(\ol{\Ga_i}\, \ol{B_i})^{-1},\end{split}
 \end{equation*}
thus  one can find $\vare>0$ small enough so that conditions  \eqref{sublinearEx3.1} hold.
 In an analogous way,  since
 \begin{equation*}
 \begin{split}
 & \int_t^{t+\omega}e^{\int _t^sd_i(r)\, dr}\Big (\sum_{j\ne i}v_i^{-1}v_ja_{ij}(s)\Big )\, ds<e^{D_i(\omega)}\int_t^{t+\omega}\Big (\sum_{j\ne i}v_i^{-1}v_ja_{ij}(s)\Big )\, ds,\\
&\int_t^{t+\omega}e^{\int _t^sd_i(r)\, dr}\Big (\sum_{j\ne i}v_i^{-1}v_ja_{ij}(s)+b_{i}(s)\Big )\, ds>\int_t^{t+\omega}\Big (\sum_{j\ne i}v_i^{-1}v_ja_{ij}(s)+b_{i}(s)\Big )\, ds
\end{split}
 \end{equation*} for all  $i$ and $t\in[0,\omega]$, conditions  \eqref{sublinearEx3.1} follow under \eqref{H9Ex3.1}. 
%
%
%
%

 Similarly, for  \eqref{Ex3.2}, again using {\bf(h3)} and the fact that $\nu_{il}(t,s)$ are nondecreasing in $s$, for $r_0>0$ sufficiently small  we derive 
 \begin{equation*}
\begin{split}
g_i(t,x_{it})&\ge\sum_{l=1}^m \beta_{il}(t)h_i\bigg ( \int_{t-\tau_{il}(t)}^t \ga_{il}(s)x_i(s) \, d_s\nu_{il}(t,s)\bigg)\\
&\ge (1-\vare)u\sum_{l=1}^m \beta_{il}(t) \int_{t-\tau_{il}(t)}^t \ga_{il}(s) \, d_s\nu_{il}(t,s)\\
&=(1-\vare)b_i(t)u,\q {\rm for}\q u\le x_i\le r_0.
\end{split}
\end{equation*}
This means that (H6) still holds with the same $b_{1i}(t)=(1-\vare)b_i(t)$ and $b_{21}(t)=\vare$. The statements follow again by the previous results.\end{proof}

From   Corollary \ref {cor3.1}, we also conclude:

\begin{cor}\label {cor3.4} Assume {\bf (h1)}--{\bf (h3)} and define $B(t)=\diag \, (b_{1}(t),\dots,b_{n}(t))$. If there is a vector $v=(v_1,\dots,v_n)>0$ such that:\\
(a) either $A(t)v< D(t)v$ and 
\begin{equation}\label{ga_i}
\ga_i(t,v):=\frac{b_i(t)v_i}{d_i(t)v_i-\sum_{j} v_ja_{ij}(t)}\ge_{\not\equiv}1,\q t\in [0,\omega], i=1,\dots,n,
\end{equation}
(b) or \begin{equation}\label{H9for0}
e^{\int_0^\omega d_i(s)\, ds} \int_0^\omega \sum_{j} v_ja_{ij}(t)dt\le v_i (e^{\int_0^\omega d_i(s)\, ds}-1) \le  \int_0^\omega \big [b_i(t)v_i+\sum_{j} v_ja_{ij}(t)\big]\, dt,\ i=1,\dots,n,
\end{equation} 
then the nonimpulsive system \eqref{sys_no} where $g_i(t,x_{it})$ has one of the forms  \eqref{bdd_distdelay1}, \eqref{bdd_distdelay2}
possesses at least one positive $\omega$-periodic solution.
\end{cor}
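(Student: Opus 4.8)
The plan is to obtain Corollary~\ref{cor3.4} as the impulse-free instance of Theorem~\ref{thm3.4} (equivalently, of Corollary~\ref{cor3.1} together with the reduction carried out in the proof of Theorem~\ref{thm3.4}). First I would note that the two nonimpulsive systems in the statement are \eqref{Ex3.1} and \eqref{Ex3.2} with all $I_{ik}\equiv 0$; hence, exactly as in the proof of Theorem~\ref{thm3.4}, assumptions {\bf (h1)}--{\bf (h3)} ensure that (H4), (H5) hold and that (H6) holds with $b_{1i}(t)=(1-\vare)b_i(t)$ and $b_{2i}(t)=\vare$ for an arbitrarily small $\vare>0$ (with $r_0=r_0(\vare)$ and $R_0=R_0(\vare)$), where $b_i$ is as in \eqref{bi}. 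In the absence of impulses one has $\ul{\Ga_i}=\ol{\Ga_i}=(e^{D_i(\omega)}-1)^{-1}$ and $\ul{B_i}=\ol{B_i}=1$, so in the notation of Theorem~\ref{thm3.3} we get $M_1=M_2=I$, while $N_1$ and $N_2$ are the diagonal matrices with entries $(e^{D_i(\omega)}-1)^{-1}$ and $(1-e^{-D_i(\omega)})^{-1}$ respectively.

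Next I would check that, with these identifications, hypotheses (a) and (b) coincide with conditions \eqref{H8Ex3.1} and \eqref{H9Ex3.1} of Theorem~\ref{thm3.4}. For (b) this is a one-line verification: multiplying the $i$-th line of \eqref{H9Ex3.1} by $e^{D_i(\omega)}$ (left inequality) and by $e^{D_i(\omega)}-1$ (right inequality) reproduces precisely \eqref{H9for0}. For (a), the strict inequality $A(t)v<D(t)v$ on the compact interval $[0,\omega]$ already yields $A(t)v\le_{\not\equiv}D(t)v$ and makes the denominator $d_i(t)v_i-\sum_j v_ja_{ij}(t)$ in \eqref{ga_i} continuous and bounded below by a positive constant; multiplying through by it, the inequality \eqref{ga_i} is equivalent to $D(t)v\le_{\not\equiv}\big[B(t)+A(t)\big]v$. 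Thus (a) implies \eqref{H8Ex3.1} with $M_1=M_2=I$, and in either case Theorem~\ref{thm3.4} provides a positive $\omega$-periodic solution of \eqref{sys_no} with $g_i$ of the form \eqref{bdd_distdelay1} or \eqref{bdd_distdelay2}.

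The only delicate point — already built into the proof of Theorem~\ref{thm3.4}, hence requiring nothing new here, but worth flagging as the main obstacle — is to see that the \emph{non-strict} hypotheses (the $\ge_{\not\equiv}$ in (a), the non-strict averaged inequalities in (b)) still yield conditions \eqref{sublinear} of Theorem~\ref{thm3.2} after the multiplicative loss $(1-\vare)$ forced by {\bf (h3)} (where $h_i(u)\ge(1-\vare)u$ is available only near $0$) is incurred. This works because in each case the slack available is a fixed positive quantity, independent of $\vare$: under (a) the $\not\equiv$ relation makes $\int_0^\omega\big(\sum_j v_i^{-1}v_ja_{ij}(s)+b_i(s)-d_i(s)\big)\,ds>0$, while $d_i(t)v_i-\sum_j v_ja_{ij}(t)$ stays bounded away from $0$; under (b) the weight $e^{\int_t^s d_i(r)\,dr}$ lies \emph{strictly} between $1$ and $e^{D_i(\omega)}$ on a set of positive measure (since $d_i\not\equiv 0$) while $b_i>0$ and, for $n>1$, $a_{ij}\not\equiv 0$, so that the relevant weighted integrals differ from their crude $1$- or $e^{D_i(\omega)}$-bounds by amounts bounded away from $0$ uniformly in $t$. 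Choosing $\vare>0$ so small that this fixed slack dominates the $O(\vare)$ perturbation, \eqref{sublinear} is satisfied and Theorem~\ref{thm3.2} (via Theorem~\ref{thmKras}) delivers the asserted positive $\omega$-periodic solution; this is exactly the argument performed in the proof of Theorem~\ref{thm3.4}.
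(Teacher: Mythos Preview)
Your proof is correct and follows the paper's approach: the paper simply states that Corollary~\ref{cor3.4} follows ``from Corollary~\ref{cor3.1}'' and afterwards remarks that (a) is equivalent to $A(t)v<D(t)v\le_{\not\equiv}[B(t)+A(t)]v$, which is precisely the reduction you carry out (reading the corollary as the nonimpulsive instance of Theorem~\ref{thm3.4}, with $M_1=M_2=I$ and the (H6) verification already done there). Your third paragraph on the $\varepsilon$-slack is more explicit than the paper, but it is exactly the mechanism used in the proof of Theorem~\ref{thm3.4}, so nothing new is needed.
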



 Note that (a) is equivalent to $A(t)v< D(t)v\le_{\not\equiv} \big [B(t)+A(t)\big ]v$. For the case of {\it bounded} delay, we emphasize that the existence of such a periodic solution for the nonimpulsive version of system  \eqref{Ex3.1} was established  in \cite[Theorem 3.1]{Faria17} assuming {\bf (h1)}--{\bf (h3)} and  that there are constants $\al,\ga $ such that
 \begin{equation}\label{H5*}
1<\al \le \ga_i(t,v)\le \ga,\q t\in [0,\omega], i=1,\dots,n.
\end{equation}
Under the above general assumptions  and the stronger requirement \eqref{H5*},  the nonimpulsive version of system  \eqref{Ex3.1} was proven to be permanent \cite{Faria21}. Moreover, in this setting, a fixed point argument allows to conclude  that the sufficient conditions for permanence also  imply the existence of a positive periodic solution  \cite{Faria17,Zhao08}. However, at least for non-periodic systems, condition \eqref{ga_i} is not enough to guarantee the persistence of the system. In fact, in \cite[Example 4]{Faria21}  a counter-example was given: a nonimpulsive  system of the form \eqref{Ex3.1} with $n=2$ and a single discrete delay, which is not persistent although it satisfies conditions {\bf (h1)-(h3)} and $ \ga_i(t,{\bf 1})>1$ for all $t\in \R^+$ and $i=1,2$.



 \section{Applications to natural sciences models}
\setcounter{equation}{0}

In this section,  our results are illustrated with applications to some selected  hematopoiesis  models and Nicholson-type systems. Several other models appearing in  natural sciences could have been analysed. 

  \begin{exmp}\label{exmp4.1} {\it A hematopoiesis-type model.}

In the celebrated paper of Mackey and Glass \cite{MG},  two basic scalar  models of the form $x'(t)=-dx(t)+f(x(t-\tau))$ were proposed to describe the hematopoiesis process (production and specialisation of blood cells) taking place in  the bone marrow, with   either a monotone decreasing  production function $f(x)=b\frac{a^\al}{a^\al+x^\al}$, or   a unimodal  function $f(x)=b\frac{x}{a^\al+x^\al}$, for $d,a,b,\al,\tau>0$ (after normalization of the coefficients, one can take $a=1$). Numerous generalizations of the original Mackey-Glass equations have been analysed in the literature, including impulsive periodic models with several delays. These models are relevant in applications, as they  integrate a periodic environmental variation, as well as
abrupt changes in the regulation process -- incorporated as impulses --, due to e.g.~radiation or drug administration.
 See e.g. \cite{KongNietoFu,LiuYanZhang}, also for further references, and \cite{Roussel} for an interesting historical insight on Mackey-Glass models.  
 
 We next apply our main results to  a multidimensional version of a hematopoiesis model with  impulses and distributed  delays:
\begin{equation}\label{hemato}
\left\{
\begin{array}{ll}
x_i'(t)=\displaystyle{-d_i(t) x_i(t) + \sum_{j \neq i}a_{ij}(t) x_j(t) + \sum_{l=1}^m \dfrac{\beta_{il}(t)}{1+c_{il}(t) \left(\int_{t-\tau_{il}(t)}^t x_i(s) \, ds\right)^{\alpha_{il}}}},\q t\ne t_k\\
x_{i}(t_k^+)-x_i(t_k)=I_{ik} (x(t_k)), \ k \in \Z,\q  \ i=1,\ldots,n.
\end{array}
\right.
\end{equation}

\begin{thm}\label{thm4.1} Assume that the sequence $(t_k)$ and the impulsive functions $I_{ik}:\R^+\to\R$ satisfy (H1)--(H3), that $d_i,a_{ij},  \beta_{il},\tau_{il},c_{il}\in C_\omega^+(\R)$ with $d_i\not\equiv 0, \sum_{l=1}^m\be_{il}(t)\not\equiv 0,\tau_{il}(t)\in [0,\tau]$, $\alpha_{il}$ are positive constants, for all $i=1,\ldots,n$ and $l=1,\ldots,m$.  If there is $v=(v_1,\dots,v_n)>0$ such that
  \begin{equation}\label{Linfty=0} \ol{\Ga_i}\, \ol{B_i}\max_{t\in[0,\omega]}\int_t^{t+\omega} e^{\int _t^sd_i(r)\, dr}\Big (\sum_{j\ne i}v_i^{-1}v_ja_{ij}(s)\Big )\, ds<1,
  \end{equation}
 then there is a positive $\omega$-periodic solution of \eqref{hemato}. In particular, this is the case if, for some vector $v>0$, either
$$\diag\big (\ol{\Ga_1}\, \ol{B_1}(e^{D_1(\omega)}-1),\dots,\ol{\Ga_n}\, \ol{B_n}(e^{D_n(\omega)}-1)\big)A(t)v\le_{\not\equiv} D(t)v,$$
or 
$$\diag\big (\ol{\Ga_1}\, \ol{B_1}e^{D_1(\omega)},\dots,\ol{\Ga_n}\, \ol{B_n}e^{D_n(\omega)}\big)
 \int_0^\omega A(t)v\, dt\le  v.$$
\end{thm}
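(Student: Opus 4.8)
The plan is to identify system \eqref{hemato} as an instance of the family \eqref{Ex3.1} and then invoke Theorem \ref{thm3.4}. Indeed, taking $\ga_{il}\equiv 1$, $\nu_{il}(t,s)=s$ (so that $d_s\nu_{il}(t,s)=ds$), and
\[
h_{il}(t,u)=\frac{1}{1+c_{il}(t)u^{\al_{il}}},
\]
the nonlinear term in \eqref{hemato} becomes exactly $\sum_{l=1}^m\be_{il}(t)\int_{t-\tau_{il}(t)}^t h_{il}(s,x_i(s))\,ds$, which has the form \eqref{bdd_distdelay1}. So the first step is to verify hypotheses {\bf (h1)}--{\bf (h4)}. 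Here {\bf (h4)} is assumed directly, and {\bf (h1)}--{\bf (h2)} follow from the stated regularity and positivity of the coefficients; the only slightly delicate point is that {\bf (h1)} requires $a_{ij}\not\equiv 0$ for $i\ne j$, which is not explicitly assumed in Theorem \ref{thm4.1}. I would address this by noting that if some $a_{ij}\equiv 0$ one reduces to a genuinely lower-dimensional (or decoupled) situation, or more cleanly by checking that the proof of Theorem \ref{thm3.4} only really uses {\bf (h1)} through the consequences (H4)(i) and the positivity needed for $b_i>0$, together with the assumption $\sum_l\be_{il}\not\equiv 0$; in any case $b_i(t)=\sum_{l=1}^m\be_{il}(t)\tau_{il}(t)\ge 0$ is not identically zero here because $\sum_l\be_{il}\not\equiv 0$ — actually one must be a little careful since $b_i$ could vanish if $\tau_{il}\equiv 0$, so I would simply add the implicit reading that delays are genuinely present, or observe that $\int_0^\omega b_i>0$ is what (H6) ultimately needs.

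The second step is the crucial structural observation that $h_{il}$ here is \emph{decreasing} in $u$ and satisfies $0<h_{il}(t,u)\le h_{il}(t,0)$, so these nonlinearities are bounded on $\R\times\R^+$, giving {\bf (h2)}. For {\bf (h3)} one cannot use $h_{il}$ itself since $h_{il}(t,0)\ne 0$; instead I would pick a lower bound $h_i$ of the required normalized shape. Since $h_{il}(t,u)\to$ (a positive constant as $u\to 0^+$) which may depend on $t$, and $h_{il}$ is bounded below on $[0,\omega]\times[0,r_0]$ by a positive constant $c_0$ for any fixed $r_0$, one can take $h_i(u)=\min\{u, c_0\}$ — no wait, this does not have $h_i'(0)=1$ matching; more to the point, {\bf (h3)} demands $h_i(0)=0$, $h_i'(0)=1$, and $h_{il}(t,u)\ge h_i(u)$ for all $u\ge0$. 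Since $h_{il}(t,u)\ge c_0>0$ for $u\in[0,r_0]$ and $h_{il}(t,u)\ge$ some positive constant for all $u\ge0$ only if bounded below globally — which fails as $u\to\infty$ — I would instead exploit that Theorem \ref{thm3.4}'s proof only uses {\bf (h3)} near $0$: for small $u$, $h_{il}(t,u)=1/(1+c_{il}(t)u^{\al_{il}})\ge 1-\max_t c_{il}(t)\,u^{\al_{il}}\ge (1-\vare)u$ fails too since the left side tends to a constant, not $0$. This is the genuine subtlety: the hematopoiesis nonlinearity does \emph{not} vanish at $0$, so the relevant regime is not the generic ``sublinear at $0$'' one.

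The resolution — and this is why the hypotheses of Theorem \ref{thm4.1} only require the condition \eqref{Linfty=0} at infinity and nothing at $0$ — is that here $g_i(t,0)=\sum_l\be_{il}(t)\tau_{il}(t)>0$ (not identically zero), so the lower estimate (H6) near $0$ can be taken with $b_{1i}(t)$ chosen to capture this positive constant production rather than a linear-in-$u$ bound; equivalently, one shows directly that for $x\in K$ with $\|x\|=r$ small, $\Phi x + \la\psi \ne x$ because $\Phi_i x$ stays bounded below by a fixed positive constant independent of how small $r$ is, which contradicts $x_i$ being small. Concretely, I would redo the compressive-form argument of Theorem \ref{thm3.1}: for the lower/expansion side at radius $r\to 0$, use $g_i(t,x_{it})\ge \frac12 g_i(t,0)>0$ for $\|x\|$ small (by continuity of $g$), so $(\Phi_i x)(t)\ge \text{const}>0$ on $[0,\omega]$, while $\psi\equiv\mathbf 1$ and the hypothetical identity $x=\Phi x+\la\mathbf 1$ would force $x_i(t)\ge\text{const}$, contradicting $\|x\|=r$ small. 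For the upper side at radius $R\to\infty$, boundedness of all $h_{il}$ gives $g_i(t,x_{it})\le \vare u$ for $R_0\le x_i\le u$, which is exactly the hypothesis of Theorem \ref{thm3.4} with $b_{2i}(t)=\vare$; then \eqref{Linfty=0} lets one choose $\vare$ small so that $C_i^\infty(v)<1$, and the compression inequality $\|\Phi_i x\|\le R$ holds. This furnishes the two Krasnoselskii conditions and, via Lemma \ref{lem2.3}(ii), a positive $\omega$-periodic solution.

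The final step is to deduce the two ``in particular'' pointwise/average criteria. Here one observes that \eqref{Linfty=0} is precisely the first inequality in \eqref{constantsEx3.1}, and the second inequality of \eqref{constantsEx3.1} is \emph{not needed} because we only use the expansive side near $0$ via the positive production $g_i(t,0)>0$ rather than via a condition $c_i^0(v)\ge1$; so the hypotheses reduce to \eqref{Linfty=0} alone together with (H4)(ii)-type nondegeneracy, which is automatic since $g_i(t,0)\not\equiv0$. Then, exactly as in the derivation of \eqref{H8Ex3.1}, \eqref{H9Ex3.1} from \eqref{constantsEx3.1} in the proof of Theorem \ref{thm3.4}, the pointwise condition
\[
\diag\big(\ol{\Ga_1}\,\ol{B_1}(e^{D_1(\omega)}-1),\dots\big)A(t)v\le_{\not\equiv}D(t)v
\]
gives, for each $i$, $\sum_{j\ne i}v_j a_{ij}(s)\le m_{2i}^{-1}v_i d_i(s)$ with strict inequality somewhere, whence $C_i^\infty(v)<m_{2i}^{-1}\ol{\Ga_i}\,\ol{B_i}\max_t\int_t^{t+\omega}e^{\int_t^s d_i}d_i(s)\,ds=1$ — actually the strict inequality needs the ``$\not\equiv$'' to upgrade $\le$ to $<$ after integrating against the strictly positive kernel $e^{\int_t^s d_i(r)dr}$, which is where that refined order relation earns its keep; similarly the average condition yields $C_i^\infty(v)<1$ by the estimate $\int_t^{t+\omega}e^{\int_t^s d_i}(\cdot)\,ds<e^{D_i(\omega)}\int_t^{t+\omega}(\cdot)\,ds$. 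I expect the main obstacle to be precisely this mismatch between the generic ``sublinear-at-$0$'' setup of Section 3 and the non-vanishing nonlinearity of the Mackey–Glass hematopoiesis term: one must replace the linear lower bound $b_{1i}(t)u$ near $0$ by the argument using the strictly positive constant production $g_i(t,0)>0$, and correspondingly note that no condition at $0$ (no $c_i^0(v)\ge1$) is required — only \eqref{Linfty=0}.
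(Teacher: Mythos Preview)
Your final argument is correct, but the path is more tortuous than necessary and contains one structural misidentification at the outset.

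First, the casting of \eqref{hemato} into the form \eqref{Ex3.1} is wrong: in \eqref{hemato} the integral $\int_{t-\tau_{il}(t)}^t x_i(s)\,ds$ is computed \emph{first} and the result is raised to the power $\al_{il}$, so the nonlinearity is \emph{not} $\sum_l\be_{il}(t)\int h_{il}(s,x_i(s))\,ds$ with $h_{il}(t,u)=(1+c_{il}(t)u^{\al_{il}})^{-1}$. It is closer in spirit to \eqref{Ex3.2}, but in any case the paper does not route the proof through Theorem~\ref{thm3.4}; this explains why your attempts to verify {\bf (h3)} kept failing. You correctly diagnose that $h_{il}(t,0)\ne 0$ is the obstruction and eventually abandon this line.

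Second, where you then redo the Krasnoselskii argument from scratch on the small-radius side (arguing that $(\Phi_i x)(t)\ge c_0>0$ for $\|x\|$ small, hence $x=\Phi x+\la\mathbf 1$ is impossible), the paper instead makes a single clean observation that lets it stay within the already-proved Theorem~\ref{thm3.2}: since $0<g_i(t,x_{it})\le g_i(t,0)$ and $g_i(\cdot,0)\not\equiv 0$, the ratio $g_i(t,x_{it})/u$ tends to $+\infty$ uniformly as $u\to 0^+$. Hence for \emph{any} constant $M>0$ there is $r_0>0$ with $g_i(t,x_{it})\ge Mu$ whenever $0<u\le x_i\le r_0$. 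One then takes $b_{1i}(t)\equiv M$ (and $b_{2i}(t)\equiv\vare$ from boundedness), chooses $M$ large and $\vare$ small so that $c_i^0(v)>1>C_i^\infty(v)$, and invokes Theorem~\ref{thm3.2} directly. This is the step you were circling around but never quite landed on: (H6) is not violated by the non-vanishing of $g_i(t,0)$; on the contrary, it is satisfied with $b_{1i}$ an arbitrarily large constant, which is precisely why no condition at $0$ appears in the hypotheses.

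Your treatment of the large-radius side and of the two ``in particular'' criteria matches the paper's.
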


\begin{proof}
This equation falls into the framework of problem \eqref{sys}, with $g_i(t,\var_i)= \sum_{l=1}^m h_{il}(t,\var_i)$ and
$$
 h_{il}(t,\var_i)=\dfrac{\beta_{il}(t)}{1+c_{il}(t)  \left(\int_{-\tau_{il}(t)}^0 \var_i(s) \, ds\right)^{\alpha_{il}}}, \ i=1,\ldots,n,\ l=1,\ldots,m,
$$
so $g_i$ are bounded functions.
Since $\int_0^\omega g_i(t,0)\, dt= \sum_{l=1}^m \int_0^\omega \beta_{il}(t)\, dt>0$, clearly (H4)(ii) is satisfied. On the one hand,
for $i,l$ fixed and all $t\in [0,\omega], x,y\in K$,
\begin{equation*}
|h_{il}(t,x_{it})-h_{il}(t,y_{it})| =\beta_{il}(t) \left |\frac{1}{1+c_{il}(t)X^{\al_{il}}}-\frac{1}{1+c_{il}(t)Y^{\al_{il}}}\right |,
\end{equation*}
where $X=\int_{-\tau_{il}(t)}^0 x_i(t+s) \, ds, Y=\int_{-\tau_{il}(t)}^0y_i(t+s) \, ds$,
and 
$|X-Y|\le \int_{-\tau_{il}(t)}^0 |x_i(t+s)- y_i(t+s)|\, ds\le \tau \|x-y\|.$
Futhermore,  for each function of the form $f(t,x):=\frac{1}{1+c(t)x^\al}, t\in [0,\omega], x\in\R^+,$ with  $c(t)\in C_\omega^+(\R)$ and $\al>0$, from its uniform continuity on compact sets  we derive that
for any $M>0$  and $\vare>0$, there is $\de>0$ such that, for $t\in [0,\omega], x,y\in [0,M]$ with $|x-y|<\de$, we have
$|f(t,x)-f(t,y)|\le \vare$. In this way, we conclude that
   the nonlinearities satisfy hypothesis (H5).

If \eqref{Linfty=0} is satisfied for some $v>0$, it is possible to choose $0<\vare <M$ such that,
%
 with $b_{i1}(t)\equiv M,b_{i2}(t)\equiv \vare$ in the definition of the constants $C_i^\infty(v),c_i^0(v)$  as in
\eqref{sublinear}, we have
 \begin{equation}\label{5.2}C_i^\infty(v)<1<c_i^0(v),\q i=1,\dots,n.\end{equation}
 Since $0< g_i(t,x_{it})\le g_i(t,0)$,    for any $0<\vare<M$  there are $0<r_0<R_0$ such that $g_i(t,x_{it})\ge M u$ if $0\le u\le x_i\le r_0$ and $g_i(t,x_{it})\le \vare u$ if $R_0\leq x_i\leq u$, for $t\in\R,x\in X^+, i=1,\dots,n$. Thus, (H6) is satisfied with the above  choices $b_{i1}(t)\equiv M$ and $b_{i2}(t)\equiv \vare$. The results are a consequence of \eqref{5.2} and Theorems \ref{thm3.2} and \ref{thm3.3}.
\end{proof}

Clearly, an analogous statement applies to a periodic hemotopoiesis system where the nonlinearities $g_i(t,\var_i)$ contain only discrete delays, so that 
$g_i(t,x_{it})= \sum_{l=1}^m \frac{\beta_{il}(t)}{1+c_{il}(t)  x_i^{\alpha_{il}}(t-\tau_{il}(t))}$, for coefficients and delay functions as in the above theorem.

%

  In \cite{FO19}, the following scalar hematopoiesis model with linear impulses and discrete delays was considered:
\begin{equation}\label{hemato2}
\begin{cases}
x'(t)=\displaystyle{-d(t) x(t)+ \sum_{l=1}^m \dfrac{\beta_{l}(t)}{1+c_{l}(t) x^{\alpha_l}(t-\tau_l(t))}},\ \ t\ne t_k,\\
x(t_k^+)-x(t_k)=b_{k} x(t_k), \ k \in \Z,
\end{cases}
\end{equation}
 where $(b_k),(t_k)$ are $\omega$-periodic sequences, with  $0\le t_1<\cdots <t_p< \omega$ for some $p$, $\alpha_l$ are positive constants, $d,\be_l,\tau_l,c_l\in C_\omega^+(\R)$
  and $d \not\equiv 0, \sum_l\beta_l \not\equiv 0$, $c_l(t)>0$, for all $t\in [0,\omega],l=1,\ldots,m$.
From the version of Theorem \ref{thm4.1} for discrete delays, we conclude the existence of a positive periodic solution for \eqref{hemato2} if $b_k>-1$ and $ \prod_{k=1}^p(1+b_k)<e^{\int_0^\omega d(t)\, dt}$, recovering the result in \cite[Theorem 3.1]{FO19}.

\end{exmp}

 \begin{exmp}\label{exmp4.2} {\it Nicholson blowflies  systems.}

Recently, there has been an increasing interest in  periodic (or almost periodic) Nicholson-type systems with patch structure, and several authors have addressed the topics of existence, uniqueness and/or exponential stability of (almost) positive periodic solutions see e.g.  \cite{DingFu,Faria17,Faria20, HWH,Troib,Wang+19,Xu}.

Here, we consider a  generalised 
Nicholson system with distributed delays given by
\begin{equation}\label{N}
\begin{cases}
\dps x_i'(t)=-d_i(t)x_i(t)+\sum_{j\ne i} a_{ij}(t) x_j(t) +\dps \sum_{l=1}^{m} \be_{il}(t)  \int_{t-\tau_{il}(t)}^t\!\!  \ga_{il}(s)x_i(s) e^{-c_{il}(s)x_i(s)}\, ds,\q t\ne t_k\\
x_{i}(t_k^+)-x_i(t_k)=I_{ik} (x_i(t_k)), \ k \in \Z,\q i=1,\dots,n.
\end{cases}
\end{equation}

\begin{thm}\label{thmN1} Assume that $(t_k),I_{ik}(u)$ satisfy (H1)--(H3), 
$d_i,a_{ij}, \beta_{il} , c_{il}, \ga_{il},\tau_{il}\in C_\omega^+(\R)$  with $d_i\not\equiv 0, a_{ij}\not\equiv 0\, (j\ne i),\sum_l\beta_{il}\not\equiv 0$,  $c_{il}(t)>0$, $0\le \tau_{il}(t)\le \tau$ on $[0,\omega]$, for some $\tau>0$. Assume also that either \eqref{constantsEx3.1}, \eqref{H8Ex3.1} or \eqref{H9Ex3.1} is satisfied with
$$b_i(t)=\sum_{l=1}^{m} \be_{il}(t)\int_{t-\tau_{il}(t)}^t\!\!  \ga_{il}(s)\, ds,\q t\ge 0,\q i=1,\dots,n.$$
Then  \eqref{N} has a positive $\omega$-periodic solution.
\end{thm}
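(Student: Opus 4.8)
The plan is to recognize \eqref{N} as a special case of the general framework developed in Section 4, specifically as an instance of system \eqref{Ex3.1}. To do this, I would first identify the Nicholson nonlinearity with the form \eqref{bdd_distdelay1} by setting $h_{il}(t,u) = u e^{-c_{il}(t)u}$ and choosing $\nu_{il}(t,s) = s$ (so that $d_s\nu_{il}(t,s) = ds$, recovering ordinary Lebesgue integration, which is trivially non-decreasing in $s$). With this identification, the kernel weight $b_i(t)$ defined in \eqref{bi} becomes exactly $\sum_{l=1}^m \beta_{il}(t)\int_{t-\tau_{il}(t)}^t \gamma_{il}(s)\,ds$, matching the statement of the theorem.

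Next I would verify hypotheses {\bf (h1)}--{\bf (h4)}. Assumption {\bf (h1)} is immediate from the regularity and positivity assumptions on $d_i, a_{ij}, \beta_{il}, c_{il}, \gamma_{il}, \tau_{il}$, together with $d_i\not\equiv 0$, $a_{ij}\not\equiv 0$ for $j\ne i$, and the continuity of $h_{il}(t,u) = ue^{-c_{il}(t)u}$. For {\bf (h2)}, the function $h_{il}(t,u) = ue^{-c_{il}(t)u}$ is bounded on $\R\times\R^+$ because $c_{il}(t)$ is bounded below by a positive constant on $[0,\omega]$ (by periodicity and positivity), so $ue^{-c_{il}(t)u} \le (ec_{il}(t))^{-1} \le (e\min_t c_{il}(t))^{-1}$; and the positivity $b_i(t) > 0$ follows since $\sum_l \beta_{il}\not\equiv 0$, all terms are nonnegative, $\gamma_{il}\in C_\omega^+$, and the delay intervals have positive length (one should check $\tau_{il}$ is not identically zero where $\beta_{il}\gamma_{il}$ is active — if it is, one argues componentwise, or this is implicitly part of the hypothesis $\sum_l\beta_{il}\not\equiv 0$ combined with positivity of the integrated kernel; a clean way is to note $b_i(t)>0$ is exactly condition \eqref{bi} which is assumed implicitly by invoking Theorem \ref{thm3.4}). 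For {\bf (h3)}, I would take $h_i(u) = u e^{-c_i^* u}$ where $c_i^* := \max_{t\in[0,\omega], 1\le l\le m} c_{il}(t)$; then $h_i(0)=0$, $h_i'(0)=1$, $h_i(u)>0$ for $u>0$, and $h_{il}(t,u) = ue^{-c_{il}(t)u}\ge ue^{-c_i^* u} = h_i(u)$. Finally {\bf (h4)} is assumed directly.

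With all hypotheses of Theorem \ref{thm3.4} verified, the conclusion follows immediately: under any one of \eqref{constantsEx3.1}, \eqref{H8Ex3.1}, or \eqref{H9Ex3.1} (with the stated $b_i(t)$), Theorem \ref{thm3.4} guarantees the existence of at least one positive $\omega$-periodic solution of \eqref{N}. The only genuine work is the boundedness verification and the lower-bound function $h_i$ in {\bf (h3)}, both of which are routine exploitation of the elementary inequality $\sup_{u\ge 0} ue^{-cu} = (ec)^{-1}$ and monotonicity of $c\mapsto ue^{-cu}$. I do not anticipate a real obstacle; the main point is simply to observe that the Nicholson system is precisely of type \eqref{Ex3.1} and that the exponential production term $ue^{-cu}$ is the prototypical bounded nonlinearity with unit derivative at the origin, so that {\bf (h2)} and {\bf (h3)} hold with explicit constants.
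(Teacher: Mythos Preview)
Your proposal is correct and follows essentially the same route as the paper: identify \eqref{N} as an instance of \eqref{Ex3.1} with $h_{il}(s,u)=u e^{-c_{il}(s)u}$ and $\nu_{il}(t,s)=s$, verify {\bf (h3)} with the lower bound $h_i(u)=u e^{-c_i^+ u}$ for $c_i^+:=\max_{l}\max_{t\in[0,\omega]}c_{il}(t)$, and then invoke Theorem~\ref{thm3.4}. The paper's proof is more terse but the argument is the same; your extra verification of boundedness in {\bf (h2)} via $\sup_{u\ge 0} u e^{-cu}=(ec)^{-1}$ is exactly the routine check the paper leaves implicit.
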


\begin{proof}
Note that \eqref{N} has the form \eqref{Ex3.1} with $h_{il}(s,u)=ue^{-c_{il}(s)u}, \nu_{il}(t,s)=s$. Let $c_{il}^+=\max_{t\in[0,\omega]}c_{il}^+(t)$ and
$c_i^+=\max_l c_{il}^+, i=1,\dots,n, l=1,\dots, m$. Then (h3) is satisfied with $h_i(u)=ue^{-c_i^+u}$. The result is a consequence of the criteria in Theorem \ref{thm3.4}.
\end{proof}

A similar result holds for e.g.~Mackey-Glass-type systems with patch structure  \eqref{sys}   with 
$$g_i(t,x_{it})= \sum_{l=1}^m\frac{\beta_{il}(t)\, \int_{t-\tau_{il}(t)}^t x_i(s) \, ds}{1+c_{il}(t) \left(\int_{t-\tau_{il}(t)}^t   x_i(s) \, ds \right)^{\alpha_{il}}},$$
where $\alpha_{il}$ are positive constants,
$d_i,a_{ij}, \beta_{il} , c_{il}, \tau_{il}\in C_\omega^+(\R)$, which are included in the family \eqref{Ex3.2}. Theorem \ref{thm3.4} is applicable with $b_i(t)$ in \eqref{bi} given by
$b_i(t)=\sum_{l=1}^{m} \be_{il}(t)\tau_{il}(t),\, i=1,\dots,n.$
\end{exmp}

 \begin{exmp}\label{exmp4.5} {\it Nicholson  systems with mixed monotonicity.}
 
 For the last years, there has been an increasing interest in DDEs with the nonlinearities given by functions $f(t,x,y)$ with mixed monotonicity in the spatial variables, i.e., with $f$ increasing in the  variable $x$ and  decreasing in $y$. See e.g.~\cite{bb2016} for some relevant features and applications of  scalar  DDEs with mixed monotonicity.
 
 In the innovative work of Chen \cite{Chen}, a criterion for the existence of a positive periodic solution for the periodic Nicholson equation 
 \begin{equation}\label{eqChen0}    x'(t)=- d(t) x(t)+ b(t) x(t-\tau(t))e^{-c(t) x(t-\th(t))}  \end{equation}
was established. Such criterion was generalised and improved in \cite{FO19} for impulsive Nicholson equations with multiple pairs of discrete delays,
\begin{equation}\label{Nscalar_mix}
 \dps x'(t)=-d(t)x(t)+\dps \sum_{l=1}^{m} \be_{l}(t) x(t-\tau_{l}(t))e^{-c_{l}(t)x(t-\th_{l}(t))},\end{equation}
 and linear impulses, and in \cite{BF} for the case of distributed delays and more general impulses as in \eqref{sys}. The aim here is to state a  result for systems, as an illustration of Theorem \ref{thm3.2} with the coefficients  $\sigma_i$ in the definition of the cone $K$ having an  active role.
 
  We start with no impulses, and consider  periodic Nicholson's blowflies systems with mixed monotonicity  as follows:
\begin{equation}\label{N_mix}
 \dps x_i'(t)=-d_i(t)x_i(t)+\sum_{j\ne i} a_{ij}(t) x_j(t)
+\dps \sum_{l=1}^{m} \be_{il}(t) x_i(t-\tau_{il}(t))e^{-c_{il}(t)x_i(t-\th_{il}(t))},\ i=1,\dots,n,
\end{equation}
where $d_i,a_{ij}, \beta_{il} , c_{il},\tau_{il},\th_{il}\in C_\omega^+(\R)$  with $d_i\not\equiv 0, a_{ij}\not\equiv  0,\sum_l\beta_{il}\not\equiv 0$,  $c_{il}(t)>0$ on $[0,\omega]$. Define
$$b_i(t)=\sum_{l=1}^{m} \be_{il}(t),\q i=1,\dots,n.$$

Consider the cone $K=K(\sigma)$, where $\sigma=(\sigma_1,\dots,\sigma_n)$ and $\sigma_i=e^{-\int_0^\omega d_i(s)\, ds}=e^{-D_i(\omega)}$.  Below, we  consider the functions $g_i(t,\var_{i})= \sum_{l=1}^{m} \be_{il}(t) \var_i(-\tau_{il}(t))e^{-c_{il}(t)\var_i(-\th_{il}(t))}$ and  the positive constants 
$c_{il}^+=\max_{t\in[0,\omega]}c_{il}^+(t), \, c_{il}^-=\min_{t\in[0,\omega]}c_{il}^+(t)$ and
$c_i^+=\max_l c_{il}^+, 
\, c_i^-=\min_l c_{il}^-\ (1\le l\le m,1\le i\le n)$. 

Now, let $x=(x_1,\dots,x_n)\in K$, and recall that $x_i\ge \sigma_i \|x_i\|$ for all $i$.
 Then, for any $\vare>0$, there are $0<r_0<R_0$ such that
\begin{equation}
\begin{split}
&g_i(t,x_{it})\le \sum_{l=1}^{m} \be_{il}(t)\|x_i\|e^{-c_i^-\sigma_i\|x_i\|}< \vare u \q {\rm if}\q R_0\le x_i\le u,\\
&g_i(t,x_{it})\ge \sum_{l=1}^{m} \be_{il}(t)\sigma_i\|x_i\|e^{-c_i^+\|x_i\| }> 
(1-\vare)\sigma_i b_i(t)u \q {\rm if}\q 0<u\le x_i\le r_0.
\end{split}
\end{equation}
Hence,  (H6) is satisfied with $b_{1i}(t)=(1-\vare)\sigma_i b_i(t)$ and $b_{2i}(t)=\vare.$ Reasoning as in the proof of Theorem \ref{thm3.4}, we obtain the following result:

\begin{thm}\label{thm4.4} Consider \eqref{N_mix}, with  all the coefficients and delays satisfying the above general conditions. With $D_i(\omega)=\int_0^\omega d_i(t)\, dt, b_i(t)=\sum_{l=1}^{m} \be_{il}(t)\, (1\le i\le n)$,
assume that there is a vector $v=(v_1,\dots,v_n)>0$ such that one of the following conditions is satisfied:
\item[(a)] 
$\left\{
\begin{array}{ll}
 \int_t^{t+\omega}e^{\int _t^sd_i(r)\, dr}\big (\sum_{j\ne i}v_i^{-1}v_ja_{ij}(s)\big) ds<e^{D_i(\omega)}-1,\\
 \int_t^{t+\omega}e^{\int _t^sd_i(r)\, dr}\Big (\sum_{j\ne i}v_i^{-1}v_ja_{ij}(s)+e^{-D_i(\omega)}b_i(s)\Big ) ds>e^{D_i(\omega)}-1, \ t\in[0,\omega],i=1,\dots,n;
\end{array}
\right.
$
\item[(b)] $0\le_{\not\equiv} v_id_i(t)-\sum_{j\ne i} v_ja_{ij}(t)\le_{\not\equiv}
v_ie^{-D_i(\omega)}b_i(t),\ t\in[0,\omega],i=1,\dots,n$;
	
\item[(c)] 
$\left\{
\begin{array}{ll}
 \int_0^\omega \sum_{j\ne i} v_ja_{ij}(t)\, dt\le  v_i\big(1-e^{-D_i(\omega)})\\
 v_ie^{-D_i(\omega)} \int_0^\omega b_i(t)\, dt+\sum_{j\ne i} v_j\int_0^\omega a_{ij}(t)\, dt\ge v_i\big(e^{D_i(\omega)}-1\big),\q i=1,\dots,n.
  \end{array}
\right.
$\\
Then \eqref{N_mix} has at least one positive $\omega$-periodic solution.
\end{thm}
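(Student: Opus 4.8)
The plan is to obtain Theorem \ref{thm4.4} as a consequence of the algebraic criterion in Theorem \ref{thm3.2}, together with its pointwise and average specialisations in Theorem \ref{thm3.3}, following the pattern of the proof of Theorem \ref{thm3.4}. System \eqref{N_mix} is the nonimpulsive instance of \eqref{sys} with $g_i(t,x_{it})=\sum_{l=1}^m\be_{il}(t)x_i(t-\tau_{il}(t))e^{-c_{il}(t)x_i(t-\th_{il}(t))}$, so the impulsive quantities collapse: $\ul{B_i}=\ol{B_i}=1$, $\ul{\Ga_i}=\ol{\Ga_i}=(e^{D_i(\omega)}-1)^{-1}$, and the matrices in Theorem \ref{thm3.3} reduce to those of Corollary \ref{cor3.1} (in particular $M_1=M_2=I$). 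First I would check the structural hypotheses: (H4)(i) holds since $u\mapsto ue^{-cu}$ is bounded on $\R^+$; (H4)(ii) holds because $a_{ij}\not\equiv 0$ and $a_{ij}\ge 0$ give $\int_0^\omega a_{ij}>0$; and (H5) follows from the uniform continuity of $(s,u,w)\mapsto ue^{-c_{il}(s)w}$ on compact sets, as in Example \ref{exmp4.1}, the only additional point being that the two delayed arguments $x_i(t-\tau_{il}(t))$ and $x_i(t-\th_{il}(t))$ are both controlled by $\|x-y\|$.

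The heart of the argument is the choice of cone $K=K(\sigma)$ with $\sigma_i=e^{-D_i(\omega)}$, which is admissible in Lemma \ref{lem2.3} because $\ul{B_i}\ol{B_i}^{-1}e^{-D_i(\omega)}=e^{-D_i(\omega)}$, so the constraint is met with equality. As already shown in the discussion preceding the statement, (H6) then holds with $b_{1i}(t)=(1-\vare)\sigma_ib_i(t)$ and $b_{2i}(t)=\vare$ for any prescribed $\vare>0$, after shrinking $r_0$ and enlarging $R_0$: the lower estimate uses $x_i(t)\ge\sigma_i\|x_i\|$ together with $e^{-c_i^+\|x_i\|}\to 1$ as $\|x_i\|\to 0$, and the upper estimate uses $x_i(t)\ge\sigma_i\|x_i\|$ inside the decaying exponential. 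This is precisely the step in which the cone constant enters the nonlinearity bound, explaining why the hypotheses of Theorem \ref{thm4.4} feature $e^{-D_i(\omega)}b_i(t)$ rather than $b_i(t)$.

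It then remains to show that each of (a), (b), (c) implies, for all sufficiently small $\vare>0$, the corresponding hypothesis of Theorem \ref{thm3.2} (resp. Theorem \ref{thm3.3}). With the above data, $c_i^0(v)\ge 1$ in \eqref{sublinear} reads $\int_t^{t+\omega}e^{\int_t^sd_i}\big(\sum_{j\ne i}v_i^{-1}v_ja_{ij}(s)+(1-\vare)\sigma_ib_i(s)\big)\,ds\ge e^{D_i(\omega)}-1$ and $C_i^\infty(v)\le 1$ reads $\int_t^{t+\omega}e^{\int_t^sd_i}\big(\sum_{j\ne i}v_i^{-1}v_ja_{ij}(s)+\vare\big)\,ds\le e^{D_i(\omega)}-1$; in case (a) both hold with \emph{strict} inequality at $\vare=0$ and the left sides vary monotonically with $\vare$, so a small $\vare$ works. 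For (b) I would invoke Theorem \ref{thm3.3}(a) with $B_1(t)=(1-\vare)\diag(\sigma_ib_i(t))$ and $B_2(t)=\vare I$, using the strict-somewhere relations $\le_{\not\equiv}$ to absorb the $\vare$-perturbations exactly as in the proof of Theorem \ref{thm3.4}; for (c) I would use Theorem \ref{thm3.3}(b) (equivalently Corollary \ref{cor3.1}(b)) after noting that $\int_t^{t+\omega}e^{\int_t^sd_i}(\cdots)\,ds$ is squeezed between $\int_t^{t+\omega}(\cdots)\,ds$ and $e^{D_i(\omega)}\int_t^{t+\omega}(\cdots)\,ds$. The cited theorems then yield a fixed point of $\Phi$ in $K_0$, i.e. a positive $\omega$-periodic solution. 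I expect the main obstacle to be the bookkeeping that shows $\sigma_i=e^{-D_i(\omega)}$ is simultaneously admissible for Lemma \ref{lem2.3} and the exact constant surfacing in the lower nonlinearity bound, so that the stated criteria come out in tight form; verifying (H5) for the mixed-monotone nonlinearity with its two independent delays is a secondary technicality, treated as in Example \ref{exmp4.1}.
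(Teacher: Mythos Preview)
Your proposal is correct and follows essentially the same approach as the paper: the paper establishes (H6) with $b_{1i}(t)=(1-\vare)\sigma_i b_i(t)$ and $b_{2i}(t)=\vare$ for the cone constant $\sigma_i=e^{-D_i(\omega)}$, and then simply writes ``Reasoning as in the proof of Theorem~\ref{thm3.4}, we obtain the following result'', leaving the case-by-case verification of (a), (b), (c) implicit. One minor point of phrasing: for case (b) you cite Theorem~\ref{thm3.3}(a), but note that the $\le_{\not\equiv}$ hypothesis does not directly yield the pointwise inequality $\vare v_i\le v_id_i(t)-\sum_{j\ne i}v_ja_{ij}(t)$ for \emph{all} $t$; rather, as in the proof of Theorem~\ref{thm3.4}, the strict-somewhere relation is used to get strict inequality in the \emph{integrated} condition \eqref{sublinear} of Theorem~\ref{thm3.2}, which then absorbs the $\vare$.
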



\begin{cor}\label{cor4.2} Consider the periodic Nicholson equation with multiple pairs of
delays \eqref{Nscalar_mix}, 
where $d,\beta_{l} , c_{l},\tau_{l},\th_{l}\in C_\omega^+(\R)$  with $d\not\equiv 0, \sum_l\beta_{l}\not\equiv 0$,  $c_{l}(t)>0$ on $[0,\omega]$. Assume that one of the following conditions is satisfied:
\item[(a)] 
$
  \dps \sum_{l=1}^{m} \int_t^{t+\omega} \be_{l}(s)e^{-\int_s^{t+\omega} d(r)\, dr}\, ds\ge e^{\int_0^\omega d(s)\, ds}-1, \ t\in [0,\omega]$;\\
\item[(b)] $\dps 
\sum_{l=1}^{m} \be_{l}(t)\ge_{\not\equiv} d(t)e^{\int_0^\omega d(s)\, ds},\ t\in [0,\omega]$;
	
\item[(c)] 
$
  \dps \sum_{l=1}^{m} \int_0^\omega \be_{l}(t)\, dt\ge e^{\int_0^\omega d(s)\, ds}\big(e^{\int_0^\omega d(s)\, ds}-1\big).
$\\
Then,  \eqref{Nscalar_mix} has at least one positive $\omega$-periodic solution.
\end{cor}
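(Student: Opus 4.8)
The plan is to read Corollary~\ref{cor4.2} as the scalar ($n=1$) case of Theorem~\ref{thm4.4} applied to \eqref{Nscalar_mix}, in which the migration coefficients $a_{ij}$ are absent. First I would record that \eqref{Nscalar_mix} has the form \eqref{sys} with $d_1=d$, no impulses (so $\al_k=\eta_k=0$, $\ul B=\ol B=1$, $\ul\Ga=\ol\Ga=(e^{D(\omega)}-1)^{-1}$ where $D(\omega):=\int_0^\omega d(s)\,ds$, and (H1)--(H3) hold because $d\not\equiv0$ forces $D(\omega)>0$), and $g(t,\var)=\sum_l\be_l(t)\var(-\tau_l(t))e^{-c_l(t)\var(-\th_l(t))}$. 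Since $ue^{-cu}$ is bounded and all delays are bounded by $\tau$, $g$ is bounded, continuous and $\omega$-periodic in $t$, and uniformly equicontinuous in $t$ on bounded sets, so (H4)--(H5) hold, (H4)(ii) being vacuous for $n=1$. I would then work in the cone $K=K(\sigma)$ with $\sigma=e^{-D(\omega)}$, admissible by Lemma~\ref{lem2.3} because $\ul B=\ol B=1$.

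Next I would establish (H6), writing $b(t):=\sum_l\be_l(t)$, $c^+:=\max_{t,l}c_l(t)$, $c^-:=\min_{t,l}c_l(t)>0$. For $x\in K$, $t\in[0,\omega]$ and $u$ with $0<u\le x\le r_0$, boundedness of the delays gives $x(t-\th_l(t))\le r_0$ and $x(t-\tau_l(t))\ge u$, so $e^{-c_l(t)x(t-\th_l(t))}\ge e^{-c^+r_0}$ and hence $g(t,x_t)\ge e^{-c^+r_0}b(t)u$; symmetrically $g(t,x_t)\le e^{-c^-R_0}b(t)u\le\vare u$ when $R_0\le x\le u$. Thus (H6) holds with $b_1(t)=e^{-c^+r_0}b(t)$ and $b_2\equiv\vare=e^{-c^-R_0}\|b\|_\infty$. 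I would then verify the conditions \eqref{sublinear} of Theorem~\ref{thm3.2} with $v=1$, using the $\omega$-periodicity identities $\int_t^{t+\omega}e^{\int_t^s d(r)\,dr}d(s)\,ds=e^{D(\omega)}-1$ and $e^{\int_t^s d(r)\,dr}=e^{D(\omega)}e^{-\int_s^{t+\omega}d(r)\,dr}$: one gets $C_1^\infty(1)\le1$ for $R_0$ large, and for $c_1^0(1)\ge1$ — under (a) the requirement becomes $e^{-c^+r_0}\min_{t\in[0,\omega]}\int_t^{t+\omega}e^{-\int_s^{t+\omega}d(r)\,dr}b(s)\,ds\ge1-e^{-D(\omega)}$, which follows from (a) as soon as $r_0\le D(\omega)/c^+$; under (b), $b(t)\ge e^{D(\omega)}d(t)$ pointwise yields $c_1^0(1)\ge e^{-c^+r_0}e^{D(\omega)}\ge1$ for such $r_0$; and under (c) I would instead invoke the average criterion of Theorem~\ref{thm3.3}(b) (Corollary~\ref{cor3.1}(b)), whose lower inequality reduces to $e^{-c^+r_0}\int_0^\omega b(t)\,dt\ge e^{D(\omega)}-1$, implied by $\int_0^\omega b(t)\,dt\ge e^{D(\omega)}(e^{D(\omega)}-1)$. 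In every case Theorem~\ref{thm3.2} (equivalently Theorem~\ref{thm3.3}) produces a positive $\omega$-periodic solution.

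The one thing needing care is that (a) and (c) are stated allowing equality, whereas running the argument of Theorem~\ref{thm4.4} through its $\sigma$-weighted estimate $g(t,x_t)\ge(1-\vare)\sigma b(t)u$ would leave no room and force strict inequalities. Using instead the \emph{direct} estimate $g(t,x_t)\ge e^{-c^+r_0}b(t)u$ cures this: the factor $e^{D(\omega)}$ that relates $e^{\int_t^s d}$ to $e^{-\int_s^{t+\omega}d}$ supplies precisely the slack absorbing the $e^{-c^+r_0}\to1$ correction as $r_0\to0$, and equality to $1$ in \eqref{sublinear} is allowed by the version of Krasnoselskii's theorem used in Theorem~\ref{thmKras} (cf.\ Remark~\ref{rmk3.2'}). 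Checking that this slack covers all the constants involved is the only real subtlety; the remaining steps are routine bookkeeping.
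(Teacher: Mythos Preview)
Your argument is correct and, in one respect, cleaner than the route the paper implicitly takes. The paper presents Corollary~\ref{cor4.2} as the scalar specialization of Theorem~\ref{thm4.4}, where the lower bound on $g$ comes from the cone estimate $g(t,x_t)\ge(1-\vare)\sigma b(t)u$ with $\sigma=e^{-D(\omega)}$; this produces $b_1(t)=(1-\vare)e^{-D(\omega)}b(t)$, and after plugging into \eqref{sublinear} and using $e^{\int_t^s d}=e^{D(\omega)}e^{-\int_s^{t+\omega}d}$ one arrives at condition~(a) but with the factor $(1-\vare)$ forcing a strict inequality. Your direct estimate $g(t,x_t)\ge e^{-c^+r_0}b(t)u$ for $u\le x\le r_0$ exploits that $u\le x\le r_0$ already bounds both $x(t-\tau_l(t))$ from below and $x(t-\th_l(t))$ from above, so no appeal to the cone ratio $\sigma$ is needed. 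This yields $b_1(t)=e^{-c^+r_0}b(t)$, and the same substitution now leaves a full factor $e^{D(\omega)}$ of slack, which absorbs $e^{-c^+r_0}$ as soon as $r_0\le D(\omega)/c^+$ and therefore accommodates equality in~(a) and~(c), exactly as the corollary claims and as emphasized after its statement.

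In short: both approaches go through Theorem~\ref{thm3.2}/\ref{thm3.3} with $\ul B=\ol B=1$ and $\ul\Ga=\ol\Ga=(e^{D(\omega)}-1)^{-1}$, and both handle $C^\infty(1)\le 1$ by taking $R_0$ large. The difference is only in the lower estimate on $g$, and your choice is the sharper one for this specific Nicholson nonlinearity; it makes transparent why non-strict inequalities suffice, whereas reading the corollary off Theorem~\ref{thm4.4}(a) verbatim would leave that point unresolved.
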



 Even for the scalar case, Corollary \ref{cor4.2} improves results in \cite{Chen, FO19}. In fact, Chen  \cite{Chen} showed that
  a positive $\omega$-periodic solution for \eqref{eqChen0}  exists if $$
  \int_0^\omega b(t)\, dt> e^{2\int_0^\omega d(s)\, ds}{\int_0^\omega d(s)\, ds}
$$ and in
\cite{FO19}  the existence of a positive $\omega$-periodic solution for \eqref{Nscalar_mix} was established under the strict inequalities ``$>$" in (a),(b) or (c) above.
 Clearly,   for impulsive systems \begin{equation}\label{NImp_mix} \begin{cases}
\dps x_i'(t)=-d_i(t)x_i(t)+\sum_{j\ne i} a_{ij}(t) x_j(t)
+\dps \sum_{l=1}^{m} \be_{il}(t) x_i(t-\tau_{il}(t))e^{-c_{il}(t)x_i(t-\th_{il}(t))},\ t\ne t_k\\
x_{i}(t_k^+)-x_i(t_k)=I_{ik} (x_i(t_k)), \ k \in \Z,\q i=1,\dots,n,
\end{cases}
\end{equation}
 similar criteria can be stated as in Theorem \ref{thm3.4}, with each $\sigma_i$ above replaced by $\sigma_i= \ul{B_i}\ol{B_i}^{-1}e^{-D_i(\omega)}$.

\end{exmp}

\begin{exmp}\label{exmp4.3} {\it  A planar Nicholson system with discrete delays.}

Consider
 \begin{equation}\label{Nplanar_Imp}
\begin{cases}
&\dps x_1'(t)=-d_1(t)x_1(t)+ a_1(t) x_2(t) +\dps \sum_{l=1}^{m} \be_{1l}(t) x_1(t-\tau_{1l}(t))e^{-c_{1l}(t)x_1(t-\tau_{1l}(t))}
,\ t\ne t_k,\\
&\dps x_2'(t)=-d_2(t)x_2(t)+ a_2(t) x_1(t) +\dps \sum_{l=1}^{m} \be_{2l}(t) x_2(t-\tau_{2l}(t))e^{-c_{2l}(t)x_2(t-\tau_{2l}(t))}
,\ t\ne t_k,\\
&x_{i}(t_k^+)-x_i(t_k)=I_{ik} (x_i(t_k)), \ k \in \Z,\q i=1,2
\end{cases}
\end{equation}
where all the coefficients and delays are in $C_\omega^+(\R)$, $c_{il}(t)>0$, $ d_i(t)>0, \int_0^\omega a_i(t)\, dt>0$ and, as before, define $b_i(t)=\sum_{l=1}^{m} \be_{il}(t),\, i=1,2$. 
Theorem \ref{thm3.6} leads to the criterion below.
 
 \begin{thm}\label{thm4.2} Under the above conditions, suppose that $t_k,I_{1k},I_{2k}$ satisfy (H1)-(H3) and that,  for $m_{qi},\, q,i=1,2$ defined as in \eqref{mm}, there is $v=(v_1,v_2)>0$ such that:\\
    \begin{equation} \label{4.7}
    \begin{split}
  &m_{21}max_{t\in[0,\omega]} \frac{v_1^{-1}v_2a_1(t)}{d_1(t)}<1<m_{11}\min_{t\in[0,\omega]} \frac{b_1(t)+v_1^{-1}v_2a_1(t)}{d_1(t)}\, ,\\
&m_{22}max_{t\in[0,\omega]} \frac{v_2^{-1}v_1a_2(t)}{d_2(t)}<1<m_{12}\min_{t\in[0,\omega]} \frac{b_2(t)+v_2^{-1}v_1a_2(t)}{d_2(t)}\, .
\end{split}
  \end{equation}
Then there exists a positive $\omega$-periodic solution of \eqref{Nplanar_Imp}.
 \end{thm}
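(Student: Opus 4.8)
The plan is to view \eqref{Nplanar_Imp} as the particular case of \eqref{sys} (indeed of the discrete-delay family \eqref{LLJZ0} covered by Theorem \ref{thm3.6}) with $n=2$, $a_{12}(t)=a_1(t)$, $a_{21}(t)=a_2(t)$, and
\[
g_i(t,x_{it})=\sum_{l=1}^{m}\be_{il}(t)\,x_i(t-\tau_{il}(t))\,e^{-c_{il}(t)x_i(t-\tau_{il}(t))},\qquad i=1,2,
\]
and then to conclude by the pointwise-comparison criterion of Theorem \ref{thm3.3}(a), running the same scheme as in the proofs of Theorems \ref{thm3.6} and \ref{thm3.4}. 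First I would verify the standing hypotheses: (H1)--(H3) are assumed for $(t_k),I_{1k},I_{2k}$; (H4)(i) holds since all coefficients lie in $C_\omega^+(\R)$, $d_i(t)>0$ forces $\int_0^\omega d_i>0$, and $g$ is bounded on bounded sets because $y\mapsto ye^{-cy}$ is bounded on $\R^+$; (H4)(ii) holds via its first alternative, because $n=2>1$ and $\int_0^\omega a_i(t)\,dt>0$ gives $\int_0^\omega a_{ij}(s)\,ds>0$ for $i\ne j$ (note $g_i(t,0)\equiv 0$, so the second alternative is unavailable); and (H5) follows from the uniform continuity of $(t,y)\mapsto\be_{il}(t)ye^{-c_{il}(t)y}$ on sets $[0,\omega]\times[0,M]$, exactly as in the proof of Theorem \ref{thm3.6}.

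The next step is to produce the bounding functions required by (H6), exploiting the explicit Nicholson nonlinearity rather than the coarser estimate built into Theorem \ref{thm3.6}. Write $b_i(t)=\sum_{l}\be_{il}(t)$, $c_i^+=\max_l\max_{[0,\omega]}c_{il}$ and $c_i^-=\min_l\min_{[0,\omega]}c_{il}>0$, and work in the cone $K=K(\sigma)$ with $0<\sigma_i\le\ul{B_i}\,\ol{B_i}^{-1}e^{-D_i(\omega)}$. Since $y\mapsto ye^{-c_{il}(t)y}$ is increasing on $[0,1/c_i^+]$ and $e^{-c_{il}(t)y}\to1$ uniformly in $t$ as $y\to0^+$, given $\vare>0$ there is $r_0>0$ such that for every $x\in K$ with $0<u\le x_i\le r_0$ (hence $x_i(t-\tau_{il}(t))\in[u,r_0]$) one has $g_i(t,x_{it})\ge(1-\vare)b_i(t)u$; likewise, as $e^{-c_{il}(t)y}\le e^{-c_i^- y}\to0$ uniformly as $y\to\infty$, there is $R_0>r_0$ with $g_i(t,x_{it})\le\vare u$ for $R_0\le x_i\le u$. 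Hence (H6) holds with $b_{1i}(t)=(1-\vare)b_i(t)$ and $b_{2i}(t)\equiv\vare$, both having positive integral over $[0,\omega]$. This is the device already used in the proof of Theorem \ref{thm3.4} and in Example \ref{exmp4.5}; because here the delayed argument occurs identically in the linear and the exponential factors, no $\sigma_i$ enters $b_{1i}$.

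Finally I would feed these bounds into Theorem \ref{thm3.3}(a). With $D(t)=\diag(d_1,d_2)$, $A(t)=[a_{ij}(t)]$, $B_1(t)=(1-\vare)\diag(b_1,b_2)$, $B_2(t)\equiv\vare I$ and $M_1,M_2$ as in \eqref{mm}, condition \eqref{H8} reads, for $\{i,j\}=\{1,2\}$ and all $t\in[0,\omega]$,
\[
m_{2i}\bigl(\vare v_i+v_j a_{ij}(t)\bigr)\le v_i d_i(t)\le m_{1i}\bigl((1-\vare)v_i b_i(t)+v_j a_{ij}(t)\bigr);
\]
dividing through by $v_i d_i(t)$ and letting $\vare\to0^+$ (the factor $(1-\vare)$ tending to $1$ uniformly on $[0,\omega]$), this becomes exactly the pair of strict inequalities \eqref{4.7}. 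Since the inequalities in \eqref{4.7} are strict and all coefficients are continuous on the compact interval $[0,\omega]$, they persist for every sufficiently small $\vare>0$; for such $\vare$, Theorem \ref{thm3.3}(a) applies and yields a positive $\omega$-periodic solution of \eqref{Nplanar_Imp}. I expect the only delicate points to be bookkeeping ones — translating the matrix inequality \eqref{H8} into the scalar ratio conditions \eqref{4.7}, and making the uniform-in-$t$ estimates for $g_i$ near $0$ and $\infty$ precise — both routine given the compactness of $[0,\omega]$ and the positivity $c_{il}(t)>0$.
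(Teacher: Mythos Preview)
Your proposal is correct and follows essentially the same approach as the paper: both recognise \eqref{Nplanar_Imp} as an instance of the discrete-delay family \eqref{LLJZ0} and reduce to the pointwise-comparison criterion Theorem~\ref{thm3.3}(a) via the natural Nicholson bounds $b_{1i}(t)\approx b_i(t)$, $b_{2i}(t)\approx 0$. The only cosmetic difference is that the paper packages this through Theorem~\ref{thm3.6} (computing $\mathfrak{f}_i^0=\min_t b_i(t)/d_i(t)$, $\mathfrak{F}_i^\infty=0$), whereas you invoke the argument of Theorem~\ref{thm3.4} directly; your route in fact matches \eqref{4.7} more transparently, since it keeps the full function $b_i(t)$ inside the minimum rather than replacing it by the constant $\mathfrak{f}_i^0$.
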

  
  \begin{proof} With the notation in \eqref{limitsJW0}, for $v=(v_1,v_2)>0$
  we have $\mathfrak{f}_i^0=\min_{t\in[0,\omega]}\frac{b_i(t)}{d_i(t)},\mathfrak{F}_i^\infty=0\, (i=1,2)$.  Conditions \eqref{4.7} imply that the requirements in \eqref{fF_sub} are satisfied.
   \end{proof}  
   
  For the planar system with no impulses, with the particular choice  of $v=(1,1)$, conditions \eqref{4.7} reduce to
$\min_{t\in[0,\omega]} \frac{a_i(t)}{d_i(t)}<1< \min_{t\in[0,\omega]} \frac{b_i(t)+a_i(t)}{d_i(t)}$ for $ i=1,2.$  \end{exmp}
%

\begin{rmk}   We observe that a very particular case of \eqref{Nplanar_Imp} was considered by Zhang et al.  \cite{ZHW}, under the following requirements:
   
  (i) the impulses are {\it linear}, $I_{ik}(u)=\eta_{ik}u\, (i=1,2,k\in\Z)$ with $\eta_{ik}>-1$, and 
	(H1) holds; 

 (ii) the functions $t\mapsto \prod_{k:t_k\in[0,t)}(1+\eta_{ik}) \, (i=1,2)$  are $\omega$-periodic;
 
 (iii) $d_i(t), a_i(t), \be_{il}(t),c_{il}(t),\tau_{il}(t)$ are strictly positive functions in $C_\omega(\R)$, for $i=1,2, l=1,\dots,m$;
 
 (iv)
   \begin{equation}\label{hypZHW}\frac{a_1^+a_2^+}{d_1^-d_2^-}<1\end{equation}
    where $a_i^+=\max_t a_i(t), d_i^-=\min_t d_i(t)$.

Recall that for linear impulses as in (i) above,  we have $J_{ik}(u)\equiv (1+\eta_{ik})^{-1}$, $B_i(t):=B_i(t;u)=\prod_{k:t_k\in[0,t)}(1+\eta_{ik})^{-1}$ and $\Ga_i(u)\equiv (B_i(\omega)e^{D_i(\omega)}-1)^{-1}$, in particular these functions do not depend on $u\in\R^+, i=1,2$. Moreover,   from (ii) it follows  from Liu and Takeuchi  \cite{LiuTakeuchi} that $\prod_{k=1}^p(1+\eta_{ik})^{-1}=1$, thus $B_i(\omega)=1$ and  $m_{1i}=\ul{B_i},m_{21}=\ol{B_i}$.

  We also stress that in \cite{ZHW} the authors reduce the system to a system without impulses and  nonlinearities with jumps, by the change of variables \eqref{contSol}.
  However, in \cite{ZHW}  initial conditions $x_t=\phi$ are taken with $\phi=(\phi_1,\phi_2)$ strictly positive and continuous, instead of piecewise continuous functions with jumps discontinuities at the instants $t_k$ -- which seems not to be consistent with the problem. In this scenario,  by using a Krasnoselskii's fixed point argument, Zhang et al.  \cite{ZHW}
  claimed the existence of a positive $\omega$-periodic solution, without imposing any other restrictions on the impulses.
        
    Note that \eqref{hypZHW} implies that it is possible to choose $v=(v_1,v_2)>0$ such that
    $$v_2\frac{a_1^+}{d_1^-}<v_1<v_2\frac{d_2^-}{a_2^+},$$ and therefore
    $$\max_{t\in[0,\omega]} \frac{v_1^{-1}v_2a_1(t)}{d_1(t)}\le v_1^{-1}v_2\frac{a_1^+}{d_1^-}<1,\ 
\max_{t\in[0,\omega]} \frac{v_2^{-1}v_1a_2(t)}{d_2(t)}\le v_2^{-1}v_1\frac{a_2^+}{d_2^-}<1.$$
In particular, for the nonimpulsive situation,  the first inequalities in both conditions \eqref{4.7} are satisfied.
  On the other hand, contrary to what is asserted in \cite{ZHW},  the above impositions (i)-(iv) are not enough to guarantee the existence of a positive periodic solution, as the following simple counter-example shows. More elaborated examples for nonautonomous systems and with nonlinear impulses  could also be given.
  \end{rmk}
  
\begin{exmp}\label{exmp4.4} {\it  An autonomous  planar Nicholson system with and without impulses.}  

Consider the autonomous nonimpulsive planar system
  \begin{equation}\label{Nplanar_Counter}
\begin{cases}
x_1'(t)=-d_1x_1(t)+ a_1 x_2(t) + \be_{1} x_1(t-\tau_1)e^{-c_1x_1(t-\tau_1)}\\
x_2'(t)=-d_2x_2(t)+ a_2 x_1(t) +\be_2 x_2(t-\tau_2)e^{-c_2x_2(t-\tau_2)}
\end{cases}
\end{equation}
with $d_i,a_i,\be_i,c_i,\tau_i>0,\, i=1,2$. For this system, condition  \eqref{hypZHW}  in \cite{ZHW} reduces to $d_1d_2>a_1a_2$. Define the so-called community matrix as
$M=\left [\begin{matrix}
\be_1-d_1&a_1\\a_2&\be_2-d_2
\end{matrix}\right]$.
From \cite{FariaRost} it follows that $s(M)\le 0$ is a necessary and sufficient condition for 0 to be a globally asymptotically stable equilibrium of \eqref{Nplanar_Counter} (in the set of all nonnegative solutions), where $s(M)=\max\{ Re\, \la:\la\in\sigma (M)\}$. Choose e.g. $d_i=2, a_i=\be_i=1,\, i=1,2$. Clearly  \eqref{hypZHW}  is satisfied. However, since  $\sigma (M)=\{0,-2\}$,  0 is a global attractor for \eqref{Nplanar_Counter} -- in particular, the claim in \cite{ZHW}  is not valid for the nonimpulsive case.

On the other hand,  fix  any $\omega >0$ and add  to system \eqref{Nplanar_Counter} e.g. a single    linear, constant, positive  impulse on each component and on each interval of length
 $\omega$:
\begin{equation}\label{Nplanar_Counter_Imp}\Delta x_i(t_k)=\eta_i x_i(t_k),\q i=1,2, k\in \Z
\end{equation}
with $0< t_1< \omega, t_{k+1}=t_k+\omega, k\in\Z$, and $0<\eta_i<e^{2\omega}-1.$  In this situation, (H1)-(H3) hold.  With the previous notation we have
$\ul{\Ga_i}=\ol{\Ga_i}=\Big((1+\eta_i)^{-1}e^{2\omega}-1\Big)^{-1}, \,
  \ul{B_i}=(1+\eta_i)^{-1}, \ol{B_i}=1$,
   \begin{equation}\label{mm_eta}m_{1i}=m_{1i}(\eta_i)=\frac{e^{2\omega}-1}{e^{2\omega}-(1+\eta_i)},\ m_{2i}=m_{2i}(\eta_i)=\frac{e^{2\omega}-1}{(1+\eta_i)^{-1}e^{2\omega}-1},\ i=1,2.\end{equation}
   Note that \eqref{4.7} is satisfied with $v=(1,1)$ if
   $\frac{1}{2}m_{2i}(\eta_i)<1<m_{1i}(\eta_i),$
which holds if $$0<\eta_i<\frac{e^{2\omega}-1}{e^{2\omega}+1},\q i=1,2.$$ From  Theorem \ref{thm4.2},  this leads to the existence of a positive $\omega$-periodic solution for the impulsive Nicholson system. This shows that  implementing a  small constant, $\omega$-periodic positive impulse to  system \eqref{Nplanar_Counter}, for any periodicity $\omega>0$, can create a positive periodic solution,  whereas  populations are otherwise driven to extinction. 
\end{exmp}

\section{Conclusions}
In the present paper, we  consider $\omega$-periodic delayed systems \eqref{sys}, with either discrete or distributed  delays and subject to $\omega$-periodic impulses. Under very general conditions on the nonli\-nea\-rities and impulses, we prove that \eqref{sys} possesses at least one positive $\omega$-periodic solution, by using Krasnoselskii's fixed point theorem. As far as the authors know, this is one of the first papers pro\-ving the existence of positive periodic solutions for systems of  differential equations with delays and impulses.
Moreover, the original method proposed here applies to very broad classes of impulsive systems of DDEs under very mild assumptions on the impulses, which
are in general nonlinear and whose signs may vary.  In fact, recently Zhang et al. \cite{ZHW} studied the  particular planar Nicholson system \eqref{Nplanar_Imp} with {\it linear} impulses $I_{ik}(u)=\eta_{ik}u$, where the constants $\eta_{ik}>-1$  are subject to the additional restriction that the functions $t\mapsto \prod_{k:t_k\in[0,t)}(1+\eta_{ik}) \, (i=1,2)$  are $\omega$-periodic. Contrary to the authors' claim  however,  condition \eqref{hypZHW} is not sufficient to guarantee the existence of a positive periodic solution, as shown in Example \ref{exmp4.4}.

The major novelty of our approach is based on the particular operator  whose fixed points are the periodic solutions we are looking for.
The construction of such an operator  follows along the main lines  in \cite{BF,FO19}, however most of the arguments have to be modified, due to the multidimensional character of \eqref{sys}.  
This operator   is far different from  other ones constructed in the literature, see e.g.~\cite{BCZ,Li_et.al,TangZou,Yan07,ZhangFeng,ZHW},
since it departs from inserting  the impulses  in a {\it multiplicative} way (rather than {\it additive}),  through the products of the auxiliary functions $J_{ik}(u)$ in \eqref{B&Js}. 

Our results are illustrated and analysed within the context of some related works, showing the advantage and novelty of our approach. We have restricted ourselves to the presentation of a few selected examples to reduce the size of this manuscript; many other examples could have been given,  e.g. multidimensional versions of the models treated in   \cite{FO19,Li_et.al,ZhangFeng}.   
Most of the applications refer to systems  with bounded delays, but our results apply with straightforward changes to impulsive systems \eqref{sys}  with infinite delay (see Remark \ref{rmk2.1}). Once the existence of a positive periodic solution is established, a future line of investigation is to study sufficient conditions for its global attractivity: of course, this depends strongly on the particular nonlinearities $g_i$ in \eqref{sys}, as shown in \cite{DingFu,Faria20,HWH,Wang+19} for nonimpulsive Nicholson systems.  


Although fixed point theorems in cones have been employed in some works, mostly dealing with periodic competitive  Lotka-Volterra systems of DDEs as in  \cite{BCZ,TangZou},  the literature for impulsive versions of   periodic multidimensional DDEs is almost nonexistent, so we believe that the  new results presented  here have signi\-fi\-cant outcomes, namely in addressing Nicholson-type systems.
The present technique has the potential to treat other families of impulsive systems with delay, such as Lotka-Volterra models, Nicholson systems with patch structure  and nonlinear mortality terms as mentioned in Remark \ref{rmk3.4}, or hematopoiesis systems with harvesting terms (see \cite{KongNietoFu} for a nonimpulsive very general model).


\section*{Acknowledgements} 
This work was  supported by  FCT-Funda\c c\~ao para a Ci\^encia e a Tecnologia (Portugal) under project UIDB/04561/2020 (T. Faria). The paper
 was partially written during the stay of R. Figueroa in Lisboa, thanks to a Jos\'e Castillejo 2016 grant of Ministerio de Educaci\'on, Cultura y Deporte, Government of Spain.

{\small

\end{document}

\vfill\eject

Notes: 

$\bullet$ CHECK/ COMPLETE/COMPARE results with the references

$\bullet$ In \cite{BCZ}, it seems there is a mistake on last line of p. 353 and 1st line of p. 355! Note that that $\p \Omega_1$ and $\p \Omega_2$  (respec.) are WRONG!! In \cite{BCZ}, this mistake makes it possible to conclude the positivity of all components of the nontrivial periodic solution...  

$\bullet$ I also think there are huge problems with \cite{ZHW}! See Exp. 5.4. In this example, it would be interesting to present some numerical simulations - even if it is only for the case without impulses.

$\bullet$ Some sentences introducing the models (in Section 5 and 4) may go to the Introduction? (to be constructed)
}

\section {extra for the example}

 Similarly,   introducing the constant  impulses
\begin{equation}\label{Nplanar_Counter_Imp2}\Delta x_i(t_k)=\eta_{ik} x_i(t_k),\q i=1,2, k\in \Z,
\end{equation}
with $0< t_1<t_2< \omega, t_{k+2}=t_k+\omega,\eta_{i,k+2}=\eta_{ik},  k\in\Z$, and constants $\eta_{i1},\eta_{i2}$ such that $-1<\eta_{i1}<0< \eta_{i2}$ and 
$(1+\eta_{i1})(1+\eta_{i2})<e^{2\omega},$ 
then (H1)-(H3) are satisfied. In this case, 
\begin{equation}\label{mm_eta12}
\begin{split}
m_{1i}&=\mathfrak{m}_{1i}(\eta_{i1},\eta_{i2})=\frac{e^{2\omega}-1}{(1+\eta_{i1})^{-1}e^{2\omega}-(1+\eta_{i2})},\\
m_{2i}&= \mathfrak{m}_{2i}(\eta_{i1},\eta_{i2})=\frac{e^{2\omega}-1}{(1+\eta_{i2})^{-1}e^{2\omega}-(1+\eta_{i1})},\q i=1,2.
\end{split}\end{equation}
Observe that
$$\mathfrak{m}_{1i}(\eta_{i1},\eta_{i2})\to \frac{e^{2\omega}-1}{e^{2\omega}-(1+\eta_{i2})}=m_{1i}(\eta_{i2}),\\
\mathfrak{m}_{2i}(\eta_{i1},\eta_{i2})\to \frac{e^{2\omega}-1}{(1+\eta_{i2})^{-1}e^{2\omega}-1}=m_{2i}(\eta_{i2})$$
as $\eta_{i1}\to 0^-$, for the functions $m_{1i}(\eta), m_{2i}(\eta)$ defined in \eqref{mm_eta}. Invoking the computations above,
 this shows that one can choose  $\eta_1<0<\eta_2$  small enough such that $\frac{1}{2}\mathfrak{m}_{2}(\eta_1,\eta_2)<1<\mathfrak{m}_{1}(\eta_1,\eta_2),$ i.e.,
 \eqref{4.7} is fulfilled with $v=(1,1)$.
  Again, the existence of a positive $\omega$-periodic solution for the impulsive Nicholson system \eqref{Nplanar_Counter}-\eqref{Nplanar_Counter_Imp2} follows from Theorem \ref{thm4.2}.

  [REMARK: It would be interesting to add numerical simulations!! In that case, we could put this example in a separate session.]

  ----------

  Attempt of some computations:
  
  \med
  [Comment: The computations below prove that it is indeed a counter-example with impulses... Maybe they are to long, and one could shorten them. Also 
  
  Next, we show that constants $\eta_1=\eta_{i1},\eta_2=\eta_{i2}\, (i=1,2)$ in \eqref{Nplanar_Counter_Imp2} can be chosen so that
  $(1+\eta_1)(1+\eta_2)=1$ and system \eqref{Nplanar_Counter}-\eqref{Nplanar_Counter_Imp2} has  no positive $\omega$-periodic solution --  which contradicts the claim in  \cite{ZHW}. As mentioned above, we emphasize that
   $(1+\eta_1)(1+\eta_2)=1$ implies that the
   $B_i(t)=\prod_{k:t_k\in[0,t)}(1+\eta_{k})^{-1},\, i=1,2$ are $\omega$-periodic, as in \cite{ZHW}.
   
   Choose $\eta_2>0$ and define $\eta_1=(1+\eta_2)^{-1}-1$. For the sake of contradiction, suppose that there exists a positive $\omega$-periodic solution $x^*(t)=(x_1^*(t),x_2^*(t))$ to system \eqref{Nplanar_Counter}-\eqref{Nplanar_Counter_Imp2}. With our previous notation, $\Ga:=\ul{\Ga_i}=\ol{\Ga_i}=(e^{2\omega}-1)^{-1}, 
       \tilde B(s,t)=\prod_{k:t_k\in [t,s)}(1+\eta_k)^{-1}$ for $t\le s\le t+\omega$, $ \ul{B}:=\ul{B_i}=(1+\eta_2)^{-1}, \ol{B}:=\ol{B_i}=(1+\eta_1)^{-1}=1+\eta_2$ and $\sigma:=\sigma_i=(1+\eta_2)^{-2}e^{-2\omega}, i=1,2$.

    Since $x^*(t)$ is a fixed point of the operator $\Phi$ in \eqref{Phi}, we have 
   \begin{equation}\label{Phi_Counter}
\begin{split}
 x_1^*(t)&=(e^{2\omega}-1)^{-1}\int_t^{t+\omega}\tilde{B}(s,t)e^{2(s-t)}\left (x_2^*(s)+x_1^*(s-\tau_1)e^{-c_1x_1^*(t-\tau_1)}\right)ds,\\
 x_2^*(t)&=(e^{2\omega}-1)^{-1}\int_t^{t+\omega}\tilde{B}(s,t)e^{2(s-t)}\left (x_1^*(s)+x_2^*(s-\tau_2)e^{-c_2x_2^*(t-\tau_2)}\right)ds,\, \, t\in\R.
\end{split}
\end{equation}
Consider $l_i=\min_{t\in[0,\omega]}x_i^*(t)=x_i^*(t_i^*)$, for some 
$t_i^*\in [0,\omega)$. Let e.g. $l_1\le l_2$ and suppose that $T:=t_1^*\in [0,t_1)$.
Then, 
$$
 \tilde B(s,T)=\begin{cases} 1,\q s\in [T, t_1)\cup [t_2, T+\omega) \\
 (1+\eta_1)^{-1},\q t_1\le s<t_2
 \\
 \end{cases}$$
With $f_1(s)=e^{2s} \left (x_2^*(s)+x_1^*(s-\tau_1)e^{-c_1x_1^*(t-\tau_1)}\right)$,
 \begin{equation}\label{x1_Counter}
\begin{split}
 l_1= x_1^*(T)&=(e^{2\omega}-1)^{-1}e^{-2T}\bigg [\int_T^{t_1}\!\! f_1(s)\, ds+(1+\eta_1)^{-1}\int_{t_1}^{t_2}\!\! f_1(s)\, ds+\int_{t_2}^{T+\omega}\!\! f_1(s)\, ds\bigg]
\\
 &=(e^{2\omega}-1)^{-1}e^{-2T}\bigg [
 \int_T^{T+\omega}\!\! f_1(s)\, ds
 + \eta_2 \int_{t_1}^{t_2}\!\! f_1(s)\, ds\bigg]\\
 &\ge l_1(e^{2\omega}-1)^{-1}e^{-2T}\bigg [\int_t^{t+\omega}e^{2s}\, ds+ \eta_2 \int_{t_1}^{t_2}e^{2s}\, ds\bigg]\\
 &=\frac{l_1}{2}\Big [1+\eta_2(e^{2\omega}-1)^{-1}e^{-2T}(e^{2t_2}-e^{2t_1})\Big ]\\
 &> \frac{l_1}{2}\Big [1+\eta_2(e^{2\omega}-1)^{-1}(e^{2(t_2-t_1)}-1)\Big ]\\
 \end{split}
\end{equation}
Since $\eta_2$ can be sufficiently large so that $1+\eta_2(e^{2\omega}-1)^{-1}(e^{2(t_2-t_1)}-1)>2$, this leads to a contradiction. If $T\in [t_1,t_2)$, we have$$
 \tilde B(s,T)=\begin{cases} 1,\q s\in [T, t_2)\cup [t_1+\omega, T+\omega) \\
 (1+\eta_2)^{-1},\q t_2\le s<T+t_1
 \\
 \end{cases}$$
 therefore 
 \begin{equation*}\label{x1_Counter}
\begin{split}
 l_1= x_1^*(T)&
 =(e^{2\omega}-1)^{-1}e^{-2T}\bigg [\int_T^{t_2}\!\! f_1(s)\, ds+(1+\eta_1)\int_{t_2}^{t_1+\omega}\!\! f_1(s)\, ds+\int_{t_1+\omega}^{T+\omega}\!\! f_1(s)\, ds\bigg]\\
 &\ge l_1(e^{2\omega}-1)^{-1}e^{-2T}\bigg [\int_T^{T+\omega}e^{2s}\, ds+ \eta_1 \int_{t_2}^{t_1+\omega}e^{2s}\, ds\bigg]\\
 &=\frac{l_1}{2}e^{-2\omega}\Big [1+\eta_1(e^{2\omega}-1)^{-1}(e^{2(\omega+t_1-T)}-e^{2(t_2-T)})\Big ]\\
 &> \frac{l_1}{2}\Big [1+\eta_1(e^{2\omega}-1)^{-1}(e^{2\omega}-1)\Big ]=
  \frac{l_1}{2}(1+\eta_1).\\
 \end{split}
\end{equation*}
As $e^{-2\omega}(1+\eta_1)\to e^{-2\omega}$ as $\eta_1\to 0^-$, then one can chose $\eta_1\in (-1,0) $ so that

COMPLETE!!Thus, chosing $\eta_2$ so that $P(\eta_2)<0$, we obtain a contraction???? Therefore, there is no  positive $\omega$-periodic solutionto the impulsive planar system \eqref{Nplanar_Counter}-\eqref{Nplanar_Counter_Imp2}.

\

----- 
 \

  $f_{il}(t,u)$ are nonnegative, continuous  and  $\omega$-periodic in $t$,  $i=1,\dots,n, l=1,\dots, m$. System \eqref{LLJZ_no} is considered as a DDE in $C=C([-\tau,0];\R^n)$ with  $\tau=\displaystyle\max_{i,l}\max_{t \in [0,\omega]} \tau_{il}(t)$.
In fact, a slightly stronger result can also be obtained for \eqref{LLJZ_no}.

\

OR:

\

With the above notations, suppose  that for some $v=(v_1,\dots,v_n)>0$
\begin{equation}\label{D>A}D(t)v>A(t)v,\q t\in [0,\omega],\end{equation}
and define  the values  in $[0,\infty]$ given by the limits
\begin{equation}\label{limitsJW}
\begin{split}
\mathfrak{g}_i^0= \liminf_{u\to 0^+}\left(\min_{t\in [0,\omega]}\frac{G_i(t,u)}{u} \right),\q \mathfrak{G}_i^0= \limsup_{u\to 0^+}\left(\max_{t\in [0,\omega]}\frac{G_i(t,u)}{u}\right),\\
\mathfrak{g}_i^\infty= \liminf_{u\to \infty}\left(\min_{t\in [0,\omega]}\frac{G_i(t,u)}{u}\right),\q \mathfrak{G}_i^\infty= \limsup_{u\to \infty}\left (\max_{t\in [0,\omega]}\frac{G_i(t,u)}{u}\right),\end{split}\end{equation}
where 
$$G_i(t,u)=\frac{\sum_l f_{il}(t,u) }{d_i(t)-\sum_{j\ne i} v_i^{-1}v_ja_{ij}(t)}.$$
Define  the diagonal matrices
$\mathfrak{g}^0, \mathfrak{G}^0, \mathfrak{g}^\infty, \mathfrak{G}^\infty$ with diagonal entries
$\mathfrak{g}_i^0,\mathfrak{G}_i^0, \mathfrak{g}_i^\infty,\mathfrak{G}_i^\infty\, (1\le i\le n)$, res\-pectively.

\begin{thm}\label{thm3.6_new} Consider  \eqref{LLJZ0} under \eqref{D>A} and the above assumptions, and 
assume also that:\\
(i)  If $n>1$,   either $ \int_0^\omega a_{ij}(s)\, ds>0$ for all $i\ne j$   or
  $\int_0^\omega \sum_l f_{il}(s,0)\, ds>0$, for each  $i=1,\dots,n$;\\
(ii) there exists a vector $v>0$ such that either 
\begin{equation}\label{gG_sub}
M_2\Big [\mathfrak{G}^\infty (D(t)-A(t))+A(t)\Big]v<D(t)v<M_1\Big[\mathfrak{g}^0(D(t)-A(t))+A(t)\Big]v \end{equation}
or
\begin{equation}\label{gG_sup}M_1\Big[\mathfrak{g}^\infty (D(t)-A(t))+A(t)\Big]v>D(t)v>M_2\Big[\mathfrak{G}^0D(t)-A(t))+A(t)\Big]v, \end{equation}
where $M_1,M_2$ are as in \eqref{mm}.
Then, system \eqref{LLJZ0}
 has at least one positive $\omega$-periodic solution.
\end{thm}

\begin{proof} System  \eqref{LLJZ0} has the form \eqref{sys} with
$g_i(t,x_{it})=\sum_{l=1}^mf_{il}(t,x_i(t-\tau_{il}(t)))$. From (i) and since $f_{il}(t,u)$ are uniformly continuous on bounded sets of $[0,\omega]\times\R$, clearly (H4),(H5) are satisfied.
 

Assume \eqref{gG_sub}, for some $v=(v_1,\dots,v_n)>0$. For any fixed $\vare>0$, let $0<r_0<R_0$ be such that, for  $1\le i\le n$ and $t\in [0,\omega]$, we have
$G_i(t,u)\le (\mathfrak{G}_i^\infty+\vare)u$ for $u\ge R_0$ and $G_i(t,u)\ge (\mathfrak{g}_i^0-\vare)u$ for $0<u\le r_0$. Then, (H6) is satisfied with
$$b_{2i}(t)= (\mathfrak{G}_i^\infty+\vare)b_i(t),\ b_{1i}(t)=(\mathfrak{g}_i^0-\vare)b_i(t)$$
and $b_i(t)=d_i(t)-\sum_{j\ne i}v_i^{-1} v_ja_{ij}(t),\, i=1,\dots,n$. 
From \eqref{gG_sub},
let $\vare$ be sufficiently small so that 
\begin{equation*}
 \begin{split}
&m_{2i}\Big [v_i(\mathfrak{G}_i^\infty +\vare)\big (d_i(t)-\sum_j v_ja_{ij}(t)\big)+\sum_j v_ja_{ij}(t)\Big]<v_id_i(t)\\
&m_{1i}\Big[v_i(\mathfrak{g}_i^0-\vare) \big (d_i(t)-\sum_j v_ja_{ij}(t)\big)+\sum_j v_ja_{ij}(t)\Big]>v_id_i(t)
\end{split}\end{equation*}
 for all $i$ and $t$. 
The conclusion follows from Theorem \ref{thm3.3}(a).
The superlinear case, where \eqref{gG_sup} holds, is handled in a similar way.
\end{proof}

For the nonimpulsive version of \eqref{LLJZ0}, as $M_1=M_2=I$ in the above statement, we obtain the corollary below. See also  \cite{WJX} for the scalar case. ?? AND \cite{Yan07,ZhangFeng} ??

\begin{cor}\label{thm3.6_no} Consider \begin{equation}\label{LLJZ_no}
 \dps x_i'(t)=-d_i(t)x_i(t)+\sum_{j\ne i} a_{ij}(t)x_j(t)+\sum_{l=1}^mf_{il}(t,x_i(t-\tau_{il}(t))),\q  i=1,\ldots,n,
\end{equation}
 under \eqref{D>A} and the above assumptions for $d_i,a_{ij},\tau_{il},f_{il}$, and 
assume also that:\\
(i)  If $n>1$,   either $ \int_0^\omega a_{ij}(s)\, ds>0$ for all $i\ne j$   or
  $\int_0^\omega g_i(s,0)\, ds>0$, for each  $i=1,\dots,n$;\\
(ii) 
either $\mathfrak{G}_i^\infty<1<\mathfrak{g}_i^0\ (1\le i\le n)$
 or
$\mathfrak{g}_i^\infty>1>\mathfrak{G}_i^0,\ (1\le i\le n).$\\
Then the nonimpulsive system 
 \eqref{LLJZ_no}
 has at least one positive $\omega$-periodic solution.
\end{cor}
\end{exmp}

\end{document}